\documentclass[11pt,a4paper]{article}
\usepackage{preambule}
\usepackage{nccmath}

\newcommand{\Pbf}{\mathbf{P}}
\newcommand{\Ebf}{\mathbf{E}}

\newcommand{\red}{\color{red}}

\newcommand{\rme}{\mathrm{e}}

\title{A confined random walk locally looks like tilted random interlacements}


\author{Nicolas Bouchot}

\AtEndDocument{\bigskip{\footnotesize%
		\textsc{CEREMADE, Universit\'e Paris Dauphine-PSL, UMR 7534, Place du Maréchal de Lattre de Tassigny, 75775 Paris Cedex 16, France.} \par  
		\textit{E-mail address}: \texttt{nicolas.bouchot@dauphine.psl.eu} }}

\begin{document}
	
	\pagestyle{plain}
	
	\maketitle

\begin{abstract}
In this paper we consider the simple random walk on $\ZZ^d$, $d \geq 3$, conditioned to stay forever in a large domain $D_N$ of typical diameter $N$. Considering the range up to time $t_N \geq N^{2+\delta}$ for some $\delta > 0$, we establish a coupling with what Li \& Sznitman \cite{liLowerBoundDisconnection2014} defined as \textit{tilted random interlacements}. This tilted interlacement can be described as random interlacements but with trajectories given by random walks on conductances $c_N(x,y) = \phi_N(x) \phi_N(y)$, where $\phi_N$ is the first eigenvector of the discrete Laplace--Beltrami operator on $D_N$. The coupling follows the methodology of the soft local times, introduced by Popov \& Teixeira \cite{popov2015soft} and used by {\v C}ern{\'y} \& Teixeira \cite{teixeiracoupling} to prove the well-known coupling between the simple random walk on the torus and the random interlacements.
\\[0.2cm]
\textsc{Keywords:} random walk, confined walk, tilted interlacements, coupling\\[0.2cm]
\textsc{2020 Mathematics subject classification:} Primary 60J10, 60K35.
\end{abstract}

	\section{Introduction}
	
	\subsection{Simple random walk on the torus and random interlacements}
	
	Consider the discrete torus of size $N \geq 1$, denoted by $\mathbb{T}_N^d = \ZZ^d / N \ZZ^d$, as well as the simple, nearest-neighbour random walk (SRW) $S=(S_n)_{n\geq 0}$ on $\mathbb{T}_N^d$, started from the uniform measure on~$\mathbb{T}_N^d$. For $d \geq 3$, the behaviour of this random walk has been extensively studied in the literature, with a notable focus on the local geometry of its range as $N \to +\infty$. Fixing $u > 0$, the range $\mathcal{R}(u N^d) \defeq \{S_0, \dots , S_{\lfloor u N^d \rfloor} \}$ is ``locally close'' to what Sznitman introduced in \cite{sznitman2010vacant} as random interlacements (RI) at level $u$, denoted by $\mathscr{I}(u)$. Informally speaking, $\mathscr{I}(u)$ is a random subset of $\ZZ^d$ that is the trace of a Poissonnian collection of independent random walk trajectories; we provide more details in Section \ref{ssec:entrelac}.
	
	In the previous decade, several works provided a quantitative and precise sense of this ``closeness'' through couplings of the range $\mathcal{R}(u N^d)$, considered as a subset of $\ZZ^d$, and the interlacements $\mathscr{I}(u)$, see \cite{belius2013gumbel, teixeira2011fragmentation} for example. One of the most significant results is due to \cite{teixeiracoupling}, with a coupling on macroscopic boxes inside $\mathbb{T}_N^d$; let us now state their result.
Let $\delta \in(0,1)$ be fixed and consider the macroscopic box $B = [0, (1-\delta)N]^d \subset \mathbb{T}_N^d$, for some $\delta>0$. 
Then, for any $u>0$, there is a non-increasing sequence $(\eps_N)_{N\geq 1}$ (that decreases as $N^{-c}$ for some~$c$ that depends on $\delta$) and a coupling $\mathcal{Q}$ of $\mathcal{R}(u N^d)$ and $\mathscr{I}((1\pm\eps_N)u)$ such that for, some $\eta > 0$, with $\mathcal{Q}$-probability at least $1- e^{-N^\eta}$, we have
	\begin{equation}\label{eq:couplage-entrelac-RW-tore}
		\mathscr{I}((1-\eps_N)u) \cap B \subseteq \mathcal{R}(u N^d) \cap B \subseteq \mathscr{I}((1+\eps_N)u) \cap B \, .
	\end{equation}
	
	This type of results were highly influencial in tackling existing questions for SRW on the torus. We can notably cite the covering time (see \cite{belius2013gumbel}) and the disconnection time for the cylinder (see \cite{SZN-cylindre}); we will comment further on these questions in Section~\ref{ssec:applications}.
	
	The core heuristical argument for these couplings is that after reaching some subset $A \subset \mathbb{T}_N^d$, the random walk $S$ will realize a set of excursions, until leaving $A$ for a time larger than the mixing time of the walk. The eventual return of $S$ in $A$ will result in a second set of excursions which is thus mostly independent of the previous set, and so on until time $u N^d$. The number of such sets of excursions is then well-approximated by a Poisson random variable. Seeing $A$ as a set in $\ZZ^d$, it can be shown that the trace $\mathscr{I}(u) \cap A$ of the RI on $A$, has exactly the law of this collection of sets of excursions.
	
	\par This core argument is not exactly specific to the SRW on the torus, and one may investigate any situation which involves the random walk visiting a large set for a long time, to see if the range exhibits this ``interlacing'' property.
	
	In this paper, we consider the random walk conditioned to stay in a large domain $D_N$ of $\ZZ^d$. After heuristical considerations involving this  ``interlacing'' property, we construct a coupling of the random walk range with a random interlacements that very much resembles $\mathscr{I}(u)$. This coupling is constructed using the same ideas as in~\cite{teixeiracoupling}, however a major difference lies in the fact that the walk conditioned to stay in $D_N$ behaves as a random walk in a time-dependent potential. This breaks the ``Markovianity'' of the walk and implies that we cannot consider standard random interlacements since they do not take the drift into account.
	
	In the next two sections, we explain how we can circumvent both of these issues. First, by considering an ``infinite'' time horizon for the walk we remove the time dependence of the potential. Second, instead of considering a Poissonnian collection of independent SRW trajectories for interlacements, we use walks on suitable conductances that coincide with the confining potential.
	
\subsection{Simple random walk conditioned to stay in a large domain, confined walk}\label{ssec:pseudo-h-transform}

	\subsubsection{Motivations and first notations}
	
	Let $S$ be the simple nearest-neighbour random walk on $\ZZ^d, d \geq 3$ and denote by $\Pbf_x$ its law starting at $x$ (with $\Pbf = \Pbf_0$). 
	 Fix a compact set $D \subset \RR^d$ which contains $0$ and which we assume to have a smooth boundary and a nonempty, connected interior. Let $N \geq 1$ and define $D_N \defeq (ND) \cap \ZZ^d$ the discrete blowup of $D$ with factor~$N$.
	
	In the rest of the paper, we fix $x_0 \in \mathrm{Int}(D)$ and we let $\alpha > 0$ and $\eps > 0$ small enough (see below Proposition \ref{prop:rester-ds-boule-anneau}) such that for all $N$ large enough, the discrete Euclidean ball of radius $(\alpha + 2\eps)N$ centered at $x_0^N = x_0 N$ is contained in~$D_N$. 
	We define
	\begin{equation}\label{eq:def-B}
		B_N = B(x_0^N, \alpha N) = \mathset{y \in D_N \, | \, d(x_0^N,y) \leq \alpha N} \, ,
	\end{equation}
	where $d$ is the Euclidean distance. For $\delta \in (0,2\eps)$, we will write $B_N^\delta = B(x_0^N, (\alpha + \delta)N) \subseteq D_N$.

	\begin{figure}[h]\centering
		\includegraphics[width=9cm]{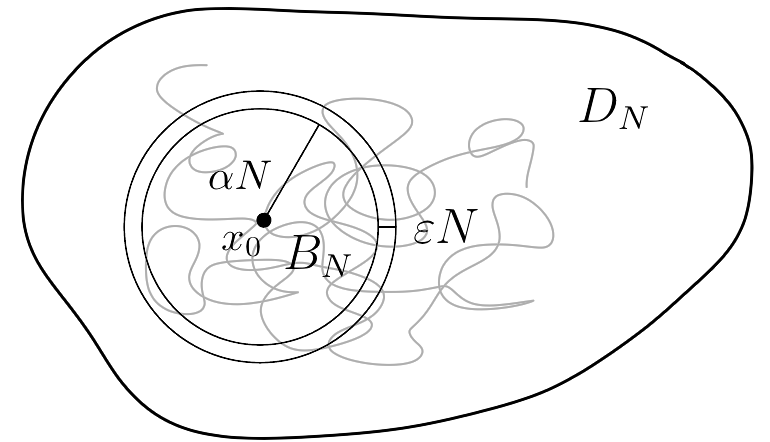}
		\caption{Representation of $D_N$ and $B_N$ with its $\eps N$-neighborhood, and a random walk path conditioned to stay in $D_N$.}
		\label{fig:B_N}
	\end{figure}
	
	Our motivation for this paper is to show that the range $\mathcal{R}(t_N)=\{S_0, \ldots, S_{t_N}\}$ of the SRW, under the conditioned probability $\Pbf( \cdot \, | \, \mathcal{R}(t_N) \subseteq D_N)$, can be coupled with a proper random interlacements at a well-chosen intensity $u_N = u_N(t_N)$ so that they coincide on $B_N$. 
	We will prove this under some condition on the sequence $(t_N)_{N\geq 0}$, but let us stress already that this will include cases where $u_N\to 0$, that is cases of RI with very low intensity.
	
	\begin{remark}
	\label{rem:shape}
		We restrict ourselves to get a coupling on a ball $B_N$ mostly for simplicity of the exposition --- some estimates for the random walk are simplified in this case. 
		
		However, our results remain valid if we replace $B_N$ with any set that is ``$s N$-regular'' for some $s > 0$, in the sense of \cite[Def.~8.1]{popov2015soft}: there is a $s > 0$ such that for any $N$ large enough, for any point $x \in \partial B_N$, there are balls $B^{\mathrm{in}} \subseteq B_N$, $B^{\mathrm{out}} \subseteq \ZZ^d \setminus B_N \cup \partial B_N$ of radius $s N$ that are both tangent to $B_N$ at point $x$. We will comment on this along the proofs.
		Note also that $B_N$ is macroscopic, and may be arbitrarily close to $D_N$, for example if $D$ is a ball, or if $D$ is well approximated by ``$s$-regular'' sets (meaning there is a subset $D^s$ of $D$, close to $D$ in Hausdorff distance, that is ``$s$-regular'' in the sense of tangent balls of radius $s$ explained previously).
	\end{remark}
	
	As previously explained, the finite time horizon of the conditioning poses substantial technical issues in its study by breaking the Markov property of the random walk. It should however be noted that if $t_N \gg N^2$, this induced ``defect'' in the Markov property is only relevant near time $t_N$. We therefore study a Markov chain that we argue properly approximates the conditioned walk up until time $(1-\eta)t_N$ for some $\eta \in (0,1)$. This process is the infinite time-horizon version of the conditioned walk, which we call the \emph{confined walk}.

	\subsubsection{The main object of study: the confined random walk}
	
	The main difference between the SRW on the torus and the SRW on $\ZZ^d$ conditioned to stay in $D_N$ is that the conditioning creates a drift that repels the walk away from the boundary $\partial D_N \defeq \{ y\notin D_N, \, \exists x\in D_N \text{ with } x\sim y \}$. In order to manage this space-time drift, we replace the random walk with another Markov chain that is morally the SRW conditioned to stay in $D_N$ \textit{forever}. Fix $N \geq 1$ and consider the substochastic matrix $P_N$ of the SRW killed when exiting $D_N$, given by
	\[ P_N(i,j) = \frac{1}{2d} \quad \text{if $i,j \in D_N$ and $i \sim j$} \, , \quad 0 \quad \text{else} \, . \]
	Write $\lambda_N$ and $\phi_N$ for the first eigenvalue and associated eigenvector of $P_N$, with the following normalization
	\begin{equation}
	\label{def:eigenfunction}
		P_N \phi_N = \lambda_N \phi_N \, , \quad \| \phi_N \|_2^2 \defeq \sum_{z \in D_N} \phi_N^2(z) = N^d \, .
	\end{equation}
We stress that the normalization is not fundamental to our result: the main requirement is that it implies that $\phi_N$ is of order one in the bulk of $D_N$, which is the case with our normalization (see \eqref{eq:phiN-borne} below). In the paper, we will write $\| \phi_N \|_2^2$ without replacing it by $N^d$ to keep this in mind.

Note that $\phi_N$ is defined on $D_N$, but it might be convenient to extend it to $\partial D_N$ (and $\mathbb{Z}^d$) by setting $\phi_N(x) =0$ for $x\notin D_N$. This first eigenvector will play a crucial role in the present paper, and we will be interested in its properties as a function $x \mapsto \phi_N(x)$. We provide a quick summary of some of these properties below and in Appendix \ref{sec:appendix:fonctions-propres}, summarizing results obtained in \cite{vecteurpropre}.

Let us introduce the following notation: for $h : \ZZ^d \longrightarrow \RR$  a real-valued function on $\ZZ^d$, we define
	\begin{equation}
	\label{def:barh-Deltad}
		\bar{h}(z) \defeq \frac{1}{2d} \sum_{|e| = 1} h(z+e) \, , \quad \Delta_d h(z) \defeq \bar{h}(z) - h(z)\, .
	\end{equation}
This way, one can rewrite~\eqref{def:eigenfunction} as the Dirichlet problem $\Delta_d \phi_N = (1-\lambda_N) \phi_N$, with  boundary condition $\phi_N\equiv 0$ on $\partial D_N$.

	We then define the transition kernel on $D_N \times D_N$, which is a Doob's transform of the initial SRW: 
	\begin{equation}\label{eq:def-p_N}
		p_N(x,y) \defeq \frac{\lambda_N^{-1}}{2d} \frac{\phi_N(y)}{\phi_N(x)} \indic{x \sim y} \, .
	\end{equation}
	We write $\Pbf^N_\mu$ for the law of the associated Markov chain with starting distribution $\mu$; we will also write $\Pbf^N_x$ when $\mu = \delta_x$ and $\Pbf^N_{\phi_N^2}$ when $\mu = c_N \phi_N^2(\cdot)$ with the correct normalizing constant~$c_N$.
	We will refer to this Markov chain as the \textit{confined random walk} (confined RW) and usually denote it by $X$.

	It is known, by standard Markov chain theory (see e.g. \cite[Appendix A.4.1]{lawlerRandomWalkModern2010}), that the transition kernel~\eqref{eq:def-p_N} is in some sense the limit, as $T \to +\infty$, of the transition kernels of the SRW conditioned to stay in $D_N$ until time $T$. Indeed, if $\tau_N$ is the first time the SRW exits $D_N$ and $x \sim y$, we have
	\begin{equation*}
		\frac{\Pbf_x(X_1 = y, \tau_N > T)}{\Pbf_x(\tau_N > T)} = \frac{1}{2d} \frac{\Pbf_y(\tau_N > T-1)}{\Pbf_x(\tau_N > T)}  = \frac{\lambda_N^{-1}}{2d} \frac{\lambda_N \Pbf_y(\tau_N > T-1)}{\Pbf_x(\tau_N > T)} \xrightarrow[T \to +\infty]{} \frac{\lambda_N^{-1}}{2d} \frac{\phi_N(y)}{\phi_N(x)} \, ,
	\end{equation*}
the last limit being a consequence of the fact that $\Pbf_z(\tau_N > T) = \phi_N(z) \frac{\| \phi_N \|_1}{N^{2d}} (\lambda_N)^T + \grdO((\beta_N)^T)$ as $T\to\infty$, for some $\beta_N < \lambda_N$ (see \cite[Prop.~6.9.1]{lawlerRandomWalkModern2010}).

\begin{remark}
Let us stress here that the transition kernel from~\eqref{eq:def-p_N} is that of a random walk among conductances $c_N(x,y) \defeq \phi_N(x)\phi_N(y)$, \textit{i.e.}\ we can rewrite $p_N(x,y)$ as 
\begin{equation}
\label{eq:kernel_conductances}
p_N(x,y) = \frac{c_N(x,y)}{\sum_{z\sim x} c_N(x,z)} = \frac{\phi_N(y)}{\sum_{z\sim x} \phi_N(z)} = \frac{1}{2d} \frac{\phi_N(y)}{\bar{\phi}_N(x)} , \quad \text{ for } x,y \in D_N, \, x\sim y\,.
\end{equation}
Indeed, since $\phi_N$ is an eigenfunction of $P_N$, we have that for any $x\in D_N$ 
\begin{equation}
\label{eq:phiharmonique}
\bar{\phi}_N(x)=\frac{1}{2d} \sum_{z\sim x} \phi_N(z) = P_N \phi_N(x) = \lambda_N \phi_N(x) \,.
\end{equation}
(Recall that we set $\phi_N(z)=0$ for $z\notin D_N$ so the sum is only on $z\in D_N$.)
Therefore, \eqref{eq:kernel_conductances} coincides with \eqref{eq:def-p_N}.
\end{remark}

	In the rest of the paper, we will focus on the confined RW 
	since it will provide a good description of most of the range of SRW conditioned to stay in $D_N$. Indeed, under $\Pbf( \cdot \, | \, \mathcal{R}_{t_N} \subseteq D_N)$, the SRW before time $(1-\eta) t_N$ should ``look like'' the confined RW; put otherwise, it should not feel the fact that the time horizon of the conditioning is finite.

	
	\begin{remark}
	The confined random walk under $\Pbf^N$ has a stationary occupation measure given by~$\phi_N^2$, with proper normalisation. Therefore, the interlacements that we consider must exhibit a space-inhomogeneous density of trajectories, notably on $B$ where it should be given by $\phi_N^2$. Such interlacements are called \textit{tilted random interlacements} (tilted RI) and were introduced in \cite{teixeira2009interlacement,liLowerBoundDisconnection2014}. 
	They correspond to a Poisson point process of trajectories, with trajectories that are not SRW but rather a \textit{tilted} RW, which creates the spatial inhomogeneity (see Section \ref{ssec:entrelac-tilte} for details).
	\end{remark}

	\subsection{Some preliminaries on random interlacements}

	\subsubsection{Definition of random interlacements}\label{ssec:entrelac}

	Consider the space of doubly infinite transient paths on the $d$-dimensional lattice $\ZZ^d, d \geq 3$:
	\[ W = \Big\{ w : \ZZ \rightarrow \ZZ^d \, : \, \forall n \in \ZZ, |w(n) - w(n+1)|_1 = 1 \text{ and } \lim_{|n| \to \infty} |w(n)| = \infty \Big\} \, , \]
	endowed with the $\sigma$-algebra $\mathscr{W}$ generated by the maps $w \mapsto w(n), n \in \ZZ$.
	We can define the equivalence relation $w \sim w' \Longleftrightarrow \exists k \in \ZZ, w(\cdot + k) = w'$, and we write $W^* \defeq W/\sim$ the corresponding quotient space as well as $\pi^*$ the associated canonical projection. The set $W^*$ is endowed with the $\sigma$-algebra $\mathscr{W}^*$ generated by $\pi^*$.
	
	Let $K$ be a subset of $\ZZ^d$. We define for $w \in W$ the  hitting times
	\begin{equation}
	\label{def:HK}
	H_K(w) = \inf \mathset{k \in \ZZ \, : \, w(k) \in K}\,,
	\quad
	\bar{H}_K = \inf \mathset{k \geq 1 \, : \, w(k) \in K}
	\end{equation}
with $\inf \varnothing = +\infty$ by convention.
We also define $W_K = W \cap \mathset{H_K < +\infty}$ the set of trajectories that hit $K$. 
	
	Suppose that $K$ is finite. We define the equilibrium measure $e_K$ on $K$ and the harmonic measure $\bar{e}_K$ as
	\begin{equation}\label{eq:def-cap-mes-eq-RI}
		e_K(x) = \Pbf_x(\bar{H}_K = +\infty) \, , \quad \bar{e}_K(x) = \frac{e_K(x)}{\capN{K}} \,.
	\end{equation}
	Here,  $\capN{K} is$ the \textit{capacity} of the set $K$, given by
	\[ \capN{K} = e_K(K) = \sum_{x \in \partial K} e_K(x) \, ,\]
	where we have used that the measure $e_K$ is supported on the (inner) boundary of $K$, which we denote by $\partial K = \{ x \in K \, : \, \exists y \in \ZZ^d \setminus K, x \sim y \}$.
	
	Intuitively, $\bar{e}_K$ is the law of the first entrance point in $K$ of a random walk trajectory that is coming from far away. Indeed, we have $\Pbf_z( X_{H_K} = x \, | \, H_K < +\infty) \to \bar{e}_K(x)$ as $|z| \to +\infty$, and the capacity can be alternatively written as
	\[
	\capN{K} = \lim_{|z| \to +\infty} G(z)^{-1} \Pbf_z (H_K < +\infty) \, , \quad \text{with $G$ the SRW Green function} \, ,
	\] 
	and can be interpreted as the \og{}size\fg{} of $K$ seen from  a random walk starting at a faraway point on $\ZZ^d$. Recall that $G(z) \asymp |z|^{2-d}$, where $|\cdot|$ is the Euclidean norm on~$\RR^d$ (see \cite[Thm.~4.3.1]{lawlerRandomWalkModern2010}).
	
	Now, let $w^* \in W^*_K$ be a class of paths hitting $K$, and denote by $\tilde{w}$  the unique $w \in w^*$ such that $H_K(w) = 0$. For $K$ finite, we can define on $\pi^*(W_K)$ a finite measure $\nu_K$ given by
	\begin{equation}\label{eq:mesures-nu_K}
		\nu_K(w^*) = \Pbf_{\tilde{w}(0)}(w(\ZZ_-) \, | \, \bar{H}_K = +\infty) \times e_K(w(0)) \times \Pbf_{\tilde{w}(0)}(w(\ZZ_+)) \, ,
	\end{equation}
	which can be interpreted, after normalization, as the law of a SRW trajectory that hits~$K$. The measures $\nu_K$ for $K$ finite can be extended to a $\sigma$-finite measure $\nu$ on $W^*$.
	
	The random interlacements process is a Poisson Point Process $\chi$ on the space $W^* \times \RR_+$ with intensity $\nu \otimes du$. The random interlacement of level $u > 0$ is the subset $\mathscr{I}(u) \subset \ZZ^d$ defined as
	\begin{equation}
		\mathscr{I}(u) \defeq \big\{ z \in \ZZ^d \, : \, \exists (w,v) \in \chi, v \leq u, z \in w(\ZZ)\big\} =  \big\{w(k) \, : \, k \in \ZZ, (w,v) \in \chi, v \leq u \big\} \, .
	\end{equation}
	We denote by $\PP$ the law of the RI on the space $\mathcal{M}_p(W^* \times \RR_+)$ of point measures on $W^* \times \RR_+$, which we also use to denote the law of the (increasing) family of random subsets $(\mathscr{I}(u))_{u > 0}$.
	
	A key property of RI is that its trace in a finite set $K$ can be recovered from a collection of SRW trajectories --- this is a core component of the coupling. More precisely, let $N_K^u$ be a Poisson-distributed random variable with parameter $u \, \capN{K}$ and let $(X^{(i)})_{i \geq 1}$ be i.i.d.\ SRW trajectories with common law $\mathbf{P}_{\bar{e}_K}$, independent from $N_K^u$. Then we have the equality in law
	\begin{equation}\label{eq:entrelac-simul-traj}
		\mathscr{I}(u) \cap K \; \overset{(d)}{=} \; \bigcup_{i = 1}^{N_K^u} \mathcal{R}_\infty(X^{(i)}) \cap K \, .
	\end{equation}
	In particular, we deduce the following crucial identity, which characterizes the law of $\mathscr{I}(u)$ as a random subset of $\ZZ^d$ (see \cite[Remark 2.2-(2)]{sznitman2010vacant}): for any fixed $u > 0$ and any finite set $K \subset \ZZ^d$,
	\[ \PP\big( \mathscr{I}(u) \cap K  = \varnothing \big) = e^{- u \, \capN{K}} \, . \]

	\subsubsection{Tilting of random interlacements}\label{ssec:entrelac-tilte}
	
The tilted random interlacements were first introduced in \cite{teixeira2009interlacement} as a generalization of random interlacement on transient weighted graphs.
Later, \cite{liLowerBoundDisconnection2014} defined the tilting of continuous-time random interlacements as a way to locally modify the trajectories of the RI. 
It was used in particular to get large deviation principles for disconnection events, by locally densifying the RI. We refer to subsequent works of Sznitmann \cite{sznitmanDisconnectionRandomWalks2017,sznitmanMacroscopicHolesSupercritical2019a,sznitmanBulkDeviationsLocal2023} for the use of tilted interlacements to study large deviation events for interlacements such as disconnection, covering of sets or excess-type events, with some applications to simple random walk.

We choose to present here a reformulation of the first approach of tilted interlacements by \cite{teixeira2009interlacement}, with a touch of local tilting of interlacement present in \cite{liLowerBoundDisconnection2014}.
	
%
	
	Consider a positive function $h : \ZZ^d \longrightarrow \RR_+^*$ which satisfies $h = 1$ outside a finite set. Informally, the $h$-tilted random interlacements of level $u > 0$, denoted by $\mathscr{I}_h(u)$, is a Poisson cloud of $h$-tilted random walk trajectories, \textit{i.e.}\ random walks on $\mathbb{Z}^d$ equipped with conductances $c(x,y) = h(x)h(y)$.
	
	More precisely, we let $\Pbf^{h}_z$ denote the law of the random walk on conductances $c(x,y)= h(x)h(y)$ starting at $z \in \ZZ^d$. Consider a finite set $K \subset \ZZ^d$. Then, we can define the $h$-tilted equilibrium measure of $K$ by
	\begin{equation}\label{eq:def:titled-equilibrium-mes}
		e^h_K(z) \defeq \mathbf{P}^h_z \Big( \bar{H}_K = +\infty \Big) h(z) \bar{h}(z) \indic{z \in K}  \, .
	\end{equation}
	The measure $e^h_K$ has finite mass and is supported on $\partial K$ with total mass $\cpc^h(K)$, the tilted capacity of~$K$. We write $\bar{e}^h_K \defeq e^h_K / \cpc^h(K)$ for the corresponding (tilted harmonic) probability measure. We can then define the analogues $(\nu^h_K)$ of the measures $(\nu_K)$ for finite $K \subset \ZZ^d$, and their extension $\nu^h$ on $W^*$.
	
	The $h$-tilted RI is then defined as a Poisson cloud of trajectories that is given by a Poisson point process on $W^* \times \RR_+$ with intensity measure $u \otimes \nu^h$. It satisfies the following property, analogous to~\eqref{eq:entrelac-simul-traj}: for any finite set $K$, denote by $N^{h,u}_K$ a Poisson variable with parameter $u\, \cpc^h(K)$ and let $(X^{(i)})_{i\geq 1}$ be i.i.d.\ $h$-tilted RW trajectories with law $\Pbf^h_{\bar{e}_K^h}$. We then have
	\begin{equation}\label{eq:entrelac-tilt-simul}
		\mathscr{I}_h(u) \cap K \overset{(d)}{=} \bigg( \bigcup_{i = 1}^{N^{h,u}_K} \mathcal{R}_\infty(X^{i}) \bigg) \cap K \, .
	\end{equation}
	
\begin{remark}
		Contrary to \cite{teixeira2009interlacement}, the continuous-time tilted interlacements at level $u > 0$, introduced in \cite{liLowerBoundDisconnection2014}, was instead defined as an exponential tilting of the law of the continuous-time RI at level $u$. To do so, define the function $F:W^* \to \mathbb{R}$, as
\[ \forall w \in W^*, F(w^*) = \int_0^\infty \frac{\Delta_d h}{h}(\tilde{w}(s)) \, \dd s \, \, ,\]
where we recall that $\tilde{w}$ is defined just above \eqref{eq:mesures-nu_K}.

The continuous-time tilted RI is the Poisson point process on $W^*$ with intensity measure $e^F \nu$.
This tilted RI posess the interlacement property \eqref{eq:entrelac-tilt-simul} (see \cite[Remark 2.2]{liLowerBoundDisconnection2014}), with trajectories given by the Markov process with generator $L \varphi (x) = \frac{1}{2d} \sum_{z \sim x} \frac{h(z)}{h(x)} (\varphi(z) - \varphi(x))$. The range of these trajectories are the same as the $h$-tilted RW, hence our definition (which is sufficient for the present paper).
\end{remark}

\subsection{Main result}

Recall that $B_N^{\eps}$ is the $\eps$-enlarged version of $B_N$, see below~\eqref{eq:def-B}, and define the tilting functions
	\begin{equation*}
		\Psi_N(x) \defeq \begin{dcases}
			\phi_N(x) & \quad \text{ if }\ x \in B_N^\eps \,, \\
			1 & \quad \text{ else} \,,
		\end{dcases} 
	\end{equation*}
	as well as the law $\Pbf^{\Psi_N}_x$ of the random walk on conductances $\Psi_N(i)\Psi_N(j)$ starting at $x \in \ZZ^d$, \textit{i.e.}\ with transition kernel
	\begin{equation}\label{eq:kernel-RW-tilt}
		p_{\psi_N}(x,y) \defeq \frac{1}{2d} \frac{\Psi_N(y)}{\bar{\Psi}_N(x)} \, \indic{x\sim y}\, ,
	\end{equation}
	recall~\eqref{eq:kernel_conductances}. We will call this process the \emph{tilted random walk} (tilted RW).
	
	Note that on $B^\eps$ we have $\Psi_N = \phi_N$, 
thus, when considering a RW path which stays inside~$B^\eps$, this path has the same probability under $\Pbf^{\psi_N}$ as under $\Pbf^{\phi_N} = \Pbf^N$. In other words, on $B^\eps$ the tilted RW has the same law as the confined RW.

	Let us mention that it is known (see \cite{pjm/1103040107}, or Appendix \ref{sec:appendix:fonctions-propres} for details) that $\lambda_N$ satisfies
	\begin{equation}\label{eq:encadrement-lambda}
		\lambda_N = 1 - \frac{\lambda}{2d} \frac{1}{N^2} (1+\bar{o}(1)) \, , 
	\end{equation}
	where $\lambda=\lambda_D$ is the first Dirichlet eigenvalue of the Laplace-Beltrami operator on $D$ and $\bar{o}(1)$ is a quantity that vanishes as $N \to +\infty$.
	In particular, there exists $c_0 > 0$ a universal constant, which can be made arbitrarily close to $\lambda/(2d)$ by taking $N$ large enough, such that for any $T \geq 0$, 
	\begin{equation}
	\label{eq:encadrement-lambda2}
		1 \leq \lambda_N^{-T} \leq \rme^{c_0 T/N^2}\,.
	\end{equation}
	
	Denote by $\mathcal{R}_{\phi_N}(t_N)$ the range up to time $t_N \geq 1$ of the Markov chain with law $\Pbf^N_{\phi_N^2}$ (recall that $\Pbf^N_{\phi_N^2}$ is the confined RW starting from its invariant measure $\mu= \phi_N^2 N^{-d}$), and recall the definition~\eqref{eq:def-B} of $B_N$.
	The main result of this paper describes how $\mathcal{R}_{\phi_N}(t_N)$ is locally close to the $\Psi_N$-tilted interlacement $\mathscr{I}_{\Psi_N}$ at an appropriate level.

\begin{theorem}\label{th:couplage-CRW-entrelac-t_N}
		Let $\delta \in (0,1)$ and consider a sequence $(t_N)_{N\geq 1}$ that satisfies $ t_N/ N^{2+\delta} \to +\infty$. We define
\[
u_N\defeq \frac{t_N }{\lambda_N \| \phi_N \|_2^2} = \frac{t_N }{\lambda_N N^d} \,, \qquad \text{where we recall}\quad  \| \phi_N \|_2^2 = \sum_{x\in D_N} \phi_N^2(x) = N^d \, , \]
as well as $\eps_N \defeq N^{-\delta/4} \to 0$.
Then, there are some $\eta > 0$ and some constants $c_1,c_2>0$ (that only depend on $\alpha, \delta, \eps, D$ and $d \geq 3$) and a coupling $\QQ$ of $\mathcal{R}_{\phi_N}(t_N)$ and $\mathscr{I}_{\Psi_N}((1 \pm \eps_N)u_N)$ such that, for all~$N$ large enough we have
		\begin{equation}
			\mathscr{I}_{\Psi_N}((1-\eps_N)u_N) \cap B_N \, \subseteq\, \mathcal{R}_{\phi_N}(t_N) \cap B_N \,\subseteq \, \mathscr{I}_{\Psi_N}((1+\eps_N)u_N) \cap B_N \,,
		\end{equation}
		with $\QQ$-probability at least $ 1 - c_1 e^{-c_2 N^{\eta}}$.
		\end{theorem}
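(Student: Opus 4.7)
The plan is to follow the soft local times strategy of \cite{popov2015soft} as implemented for the torus in \cite{teixeiracoupling}, adapted to the tilted setting. The starting observation is that inside $B_N^\eps$ the transition kernel $p_N$ agrees with $p_{\Psi_N}$, so every excursion of the tilted RW from $\partial B_N^\eps$ back into $B_N$ is, conditionally on its entry point, distributed as a $\Psi_N$-tilted random walk run until it exits $B_N^\eps$. This is what makes the tilted RI representation \eqref{eq:entrelac-tilt-simul} the correct target.

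\textbf{Step 1 (excursion decomposition).} Define iteratively the stopping times $\tau_1^{\mathrm{in}} = H_{B_N}$, $\tau_k^{\mathrm{ret}} = \inf\{n \ge \tau_k^{\mathrm{in}}\, :\, X_n \notin B_N^\eps\}$, $\tau_{k+1}^{\mathrm{in}} = \inf\{n \ge \tau_k^{\mathrm{ret}}\, :\, X_n \in B_N\}$, and let $\mathcal{N}_N = \#\{k\, :\, \tau_k^{\mathrm{in}}\le t_N\}$. The $k$-th excursion $E_k = (X_{\tau_k^{\mathrm{in}}},\ldots,X_{\tau_k^{\mathrm{ret}}})$ hits $B_N$ and is then a $\Psi_N$-tilted RW killed when exiting $B_N^\eps$. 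The range $\mathcal{R}_{\phi_N}(t_N) \cap B_N$ is, up to the possibly incomplete last excursion, the union of the $E_k \cap B_N$.

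\textbf{Step 2 (count of excursions).} By the ergodic theorem for the stationary chain $\Pbf^N_{\phi_N^2}$, $\mathcal{N}_N/t_N \to \pi_N(B_N)/\Ebf[\tau_2^{\mathrm{in}} - \tau_1^{\mathrm{ret}}]$, and a direct computation identifies this mean as $u_N \cdot \cpc^{\Psi_N}(B_N)/t_N$. The key quantitative input is a concentration estimate for $\mathcal{N}_N$: I would prove
\[ \Pbf^N_{\phi_N^2}\big( |\mathcal{N}_N - u_N \cpc^{\Psi_N}(B_N)| \ge \tfrac12 \eps_N u_N \cpc^{\Psi_N}(B_N) \big) \le c_1 \rme^{-c_2 N^\eta}, \]
using a Bernstein-type bound for additive functionals of the (geometrically) ergodic chain on the inter-excursion blocks. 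Since $u_N \cpc^{\Psi_N}(B_N) \asymp t_N/N^2$ and $\eps_N^2 = N^{2+\delta'}/t_N$, the typical fluctuation is of order $\sqrt{t_N/N^2}$ while the allowed error is $\eps_N u_N \cpc^{\Psi_N}(B_N) \asymp N^{\delta'/2}\sqrt{t_N/N^2}$, leaving a polynomial safety factor $N^{\delta'/2}$ that powers the stretched-exponential bound.

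\textbf{Step 3 (entry distribution $\approx \bar e_{B_N}^{\Psi_N}$).} Between successive excursions, the walk spends a macroscopic amount of time in $D_N\setminus B_N^\eps$ before re-entering $B_N$, so one expects $X_{\tau_k^{\mathrm{in}}}$ to be approximately distributed as $\bar e_{B_N}^{\Psi_N}$. Making this quantitative requires potential-theoretic estimates for the tilted RW on the annulus $D_N\setminus B_N$: specifically, a uniform bound $\sup_{z\in \partial B_N^\eps}\| \Pbf^N_z(X_{\tau^{\mathrm{in}}}\in\cdot) - \bar e_{B_N}^{\Psi_N}\|_{\mathrm{TV}} \le N^{-c}$. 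The natural approach is to compare the tilted walk with its continuum scaling limit (the $h$-process for the Laplace--Beltrami eigenfunction), using regularity of $\phi_N$ on the macroscopic annulus to invoke standard harmonic-measure estimates. The ``$\eps N$-regularity'' of $B_N$ (cf.\ Remark \ref{rem:shape}) is precisely what makes these estimates uniform in the entry point.

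\textbf{Step 4 (soft local times coupling and sandwich).} Armed with Step~3, apply \cite[Thm.~4.8]{popov2015soft} to the sequence $(E_k)_{k\ge 1}$ with reference measure $\bar e_{B_N}^{\Psi_N}$: this produces, on an enlarged probability space, an i.i.d.\ sequence $(\bar X^{(i)})_{i\ge 1}$ of $\Psi_N$-tilted RW's with law $\Pbf^{\Psi_N}_{\bar e_{B_N}^{\Psi_N}}$ such that, with probability $1 - c_1\rme^{-c_2 N^\eta}$, the first $(1\pm\tfrac12\eps_N) u_N\cpc^{\Psi_N}(B_N)$ excursions and i.i.d.\ trajectories agree as trajectories in $B_N^\eps$. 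Combining this with Step~2 and using the tilted interlacement identity \eqref{eq:entrelac-tilt-simul} applied to $K=B_N$ (intersecting the $\bar X^{(i)}$ with $B_N$ and using that they are killed outside $B_N^\eps$ for the upper bound/lower bound respectively) yields the claimed sandwich with $\mathscr{I}_{\Psi_N}((1\pm\eps_N)u_N)\cap B_N$.

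\textbf{Main obstacle.} The hardest part is Step~3: one must control the hitting distribution of $B_N$ by the tilted walk started from $\partial B_N^\eps$, in a regime where the walk has a genuine space-dependent drift driven by $\nabla\log\phi_N$. The soft local times method is itself robust, and the concentration in Step~2 follows from now-standard ergodic estimates; but the potential-theoretic input of Step~3 demands fine comparisons between $\phi_N$ and its continuum limit, and is what ultimately determines the polynomial slack $\delta'<\delta$ in $\eps_N^2 = N^{2+\delta'}/t_N$.
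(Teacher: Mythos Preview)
Your overall architecture---excursion decomposition, concentration of the excursion count, soft local times---is the right one, and your Step~2 computation is essentially correct in orders of magnitude. However, there are two genuine structural gaps relative to the paper's proof, both concentrated in Steps~3--4.

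\textbf{Gap 1: invariant measures, not TV closeness.} Your Step~3 proposes to show that the entry law of the confined walk into $B_N$ from $\partial B_N^\eps$ is close in total variation to the $\Psi_N$-tilted harmonic measure $\bar e_{B_N}^{\Psi_N}$. The paper never proves this, and it is not clear that it holds at the required precision: the confined walk is recurrent in $D_N$ and its hitting law on $B_N$ depends on the full geometry of $D_N\setminus B_N$, whereas $\bar e_{B_N}^{\Psi_N}$ is defined through escape to infinity of a transient walk. What the paper does instead is introduce a \emph{mesoscopic} intermediate set $\Delta=\{x:d(x,B_N)>N^\gamma\}$ with $\gamma<1$ close to $1$, define Markov chains $Y$ and $Z$ on $\Sigma=\partial B_N\times\partial\Delta$ recording entry/exit points of excursions for the tilted RW and the tilted RI respectively, and then show (Lemmas~\ref{lem:mes-inv-Y} and~\ref{lem:mes-inv-Z}) that $Y$ and $Z$ have \emph{exactly the same} invariant measure $\tilde\pi$. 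This exact equality---proved via a last-exit decomposition and the tilted Green-function identity \eqref{eq:LED-tilted}---is the hypothesis needed to invoke Theorem~\ref{th:soft-local-times} (the Markov-chain soft local times theorem of \cite{teixeiracoupling}), and it sidesteps the hard potential-theoretic comparison you flag.

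\textbf{Gap 2: excursions versus trajectories.} In Step~4 you couple the RW excursions $(E_k)$ with an \emph{i.i.d.}\ sequence of full $\Psi_N$-tilted trajectories started from $\bar e_{B_N}^{\Psi_N}$. But each such trajectory makes a random, geometrically distributed number $T^j$ of excursions into $B_N$ before escaping (with $\Ebf[T^1]=\lambda_N\cpc_\Delta^{\phi_N}(B)/\cpc^{\Psi_N}(B)$, see the proof of Proposition~\ref{prop:nb-excursions-entrelac}), so the trajectory count and the excursion count differ by this non-trivial factor. The paper handles this by decomposing RI trajectories into excursions as well, so that the target process $Z$ is a genuine \emph{Markov chain} (not i.i.d.)\ on $\Sigma$, with transition kernel \eqref{eq:noyau-Z}; the coupling is then between two Markov chains with the same invariant measure, which is exactly the setting of Theorem~\ref{th:soft-local-times}. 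Your identification $\mathcal N_N\approx u_N\cpc^{\Psi_N}(B_N)$ in Step~2 is the \emph{trajectory} count, but if you decompose into excursions from $B_N$ to $\partial B_N^\eps$ the relevant count is different, and matching the two levels requires precisely the capacity relation above.

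The choice of mesoscopic $\Delta$ (rather than macroscopic $\partial B_N^\eps$) is not cosmetic: it is what makes the mixing times of $Y$ and $Z$ small ($\asymp N^{1-\gamma}$, Proposition~\ref{prop:temps-mixage}) and the variances in \eqref{eq:borne-soft-local-times} controllable (Proposition~\ref{prop:variance-1}), and it is what allows $\gamma$ to be tuned against $\delta'$ so that all three error terms in \eqref{eq:borne-couplage-points-3-termes} are stretched-exponentially small.
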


Note that taking $t_N = \lfloor u N^d \rfloor$, we get $u_N \to u > 0$, thus this is consistant with the random walk on the torus (recall \eqref{eq:couplage-entrelac-RW-tore}). It should be noted that with our choice of $t_N$, the number of interlacement trajectories hitting $B_N$ is of order $u_N N^{d-2} \asymp t_N/N^2 \to +\infty$.

\begin{remark}
Coming back to the original question of the random walk conditioned to stay in $D_N$, we can expect the same result as in Theorem~\ref{th:couplage-CRW-entrelac-t_N}. 
The challenge mostly resides in the technicalities needed to properly compare the range of the confined RW and the range of the SRW condioned to stay in $D_N$. 
With the same method as for \cite[Lemma~3.10]{dingDistributionRandomWalk2021a}, one can show that for any $\eta > 0$, for any $t \in \llbracket \eta t_N, (1-\eta) t_N \rrbracket$, the law of $S_t$ under $\mathbf{P}(\cdot \mid \mathcal{R}_{t_N} \subseteq D_N)$ is close to $\phi_N^2 / \| \phi_N\|_2^2$, that is the invariant measure of the confined RW. 
However we would need to get a coupling of the excursions inside $B_N$ --- this seems feasible, at least if $B_N$ has a sufficiently small (\textit{i.e.}\ $\bar{o}(N)$) radius.
\end{remark}

\begin{remark}
One could wonder how our choice of taking $\Psi_N \equiv 1$ on $\ZZ^d \setminus B_N^\eps$ affects our result. One could easily think of other choices (ones that may be more optimal in some sense), but from what one can gather from the proofs, changing the function $\psi_N$ outside from $B_N^{\eps}$ would only affect marginally the tilted RI on $B_N$ --- some constants should be changed here and there in the proof.
The reason is the macroscopic ``buffer'' zone $B_N^\eps \setminus B_N$ that allows the tilted RW to forget about the exterior before returning to $B_N$.
One can mention that the optimal $\Psi_N$ to take is the one which maximizes $\Pbf^{\Psi_N}_x \big( H_\Delta < \bar{H}_B < +\infty) / \Pbf^N_x \big( H_\Delta < \bar{H}_B)$.
\end{remark}

\begin{remark}
	Since $\lambda_N = 1 - \grdO(N^{-2})$, its contribution is absorbed by $\eps_N$ and thus it can be omitted without changing the theorem. Still, we state the result with the ``correct'' level for $\mathscr{I}_{\Psi_N}$, as $\lambda_N$ may become relevant when considering mesoscopic balls instead of $B_N$ (in which case $\eps_N$ can be lowered).
\end{remark}

	\subsection{Connection with other works}
	
	The confined and tilted walks appelations also appear in a paper by Li \cite{liLowerBoundDisconnection2017a} studying disconnection events for simple random walk. The confined walk essentially designates the same process. The tilted walk is slightly different: in Li's paper the drift is only active up to a deterministic time.
	It can be noted that Li proved that the confined walk locally dominates (in stochastic sense) random interlacements.

	Chiarini \& Nietzchner \cite{chiariniLowerBoundsBulk2023} also studied excess type events for functionals of the SRW, which also involves tilted interlacements to find a lower bound on the probability of such events. The strategy to achieve these events is to behave like a random walk in a drift, much like the drift that we use for the tilted walk. Their proof used a series of coupling on mesocopic boxes (of size $N^\lambda, \lambda \in (0,1)$) between the tilted walk, simple random walk and standard random interlacements.
	
	The coupling relies on the regularity of the drift function to control the effect of the drift at these mesocopic scales. In our case, we where able to prove the regularity of $\phi_N$ (see Appendix \ref{sec:appendix:fonctions-propres}) which indicates that at such scales, we may replace the tilted RI by the standard RI. It seems natural to expect that the level $u_N$ would now depend on the average value of $\phi_N$ on this mesocopic box. Note that the regularity of $\phi_N$ is offscaled by the number of excursions of $\mathscr{I}_{\phi_N}(u_N)$ inside $B_N$, which grows as $u_N N^{d-2} \asymp t_N N^{-2} \to +\infty$, therefore the proof of \cite{chiariniLowerBoundsBulk2023} does not apply in this context.

	\subsection{Possible applications of the coupling}\label{ssec:applications}
	
	Theorem \ref{th:couplage-CRW-entrelac-t_N} being the analogue of \eqref{eq:couplage-entrelac-RW-tore} for the random walk conditioned to stay in a domain, it is natural to expect that the results concerning the random walk on the torus have an analogue for the random walk conditioned to stay in a domain. To simplify the exposition, we focus on the case where $D$ is the unit Euclidean ball in $\RR^d$ and $B_N = B(0, (1- \eps)N)$. Theorem \ref{th:couplage-CRW-entrelac-t_N} gives us a coupling of the range $\mathcal{R}_{\phi_N}(t_N)$ of the confined RW on the ball $B_N$ with a tilted RI. Let us give two examples of problems that have be tackled for the random walk on the torus using random interlacements, and what should be the corresponding ``conditioned'' results, that should rely on a better understanding of tilted RI.
	
\paragraph{Disconnection of sets}
One can expect that, similarly to the RI, the tilted RI $\mathscr{I}_{\Psi_N}$ to undergo a phase transition regarding percolation on subsets of $D_N$. 
Thanks to Theorem~\ref{th:couplage-CRW-entrelac-t_N}, this should translate for the confined RW as a phase transition for the probability of an inner set $K_N \subseteq B_N$ being disconnected from $\partial B_N^\eps$ (in the same flavor as RI, see \cite{disconnection2010}). 
We may also adapt the entropic bound used in \cite{liLowerBoundDisconnection2014} to get large deviation results in the subcritical regime.
	
\paragraph{Disconnection of cylinders}
	The disconnection time of $\mathbb{T}_N^d \times \ZZ$ by the random walk was notably studied in \cite{sznitman2009disconnection-cylinder}, obtaining that the disconnection occurs around time $N^{2d}$. In our setting, consider the random walk conditioned to stay in the cylinder $D_N \times \ZZ$, as well as the central cylinder $B_N\times \ZZ$ of radius $(1-\eps)N$ centered at $\mathset{0} \times \ZZ$.
	Write $\mathcal{V}_N^\eps(t) = (\ZZ \times B_N) \setminus \mathcal{R}(t)$ the vacant set in $B_N\times \ZZ$, and define the disconnection time
	\begin{equation}
		\mathfrak{D}_N \defeq \inf \mathset{t \geq 0 \, : \, \mathcal{V}_N^\eps(t) \text{ is not connected}} \, .
	\end{equation}
	According to Theorem \ref{th:couplage-CRW-entrelac-t_N} we can expect $\mathfrak{D}_N$ to also be of order $N^{2d}$, with a possiblity of obtaining tightness of the laws of $\mathfrak{D}_N/N^{2d}$.
	
	\paragraph{Covering time}
	Still considering $B_N \subset D_N$, we may look at the covering time of $B_N$, meaning
	\begin{equation}
		\mathfrak{C}_N \defeq \inf \mathset{t \geq 0 \, : \, B_N \subseteq \mathcal{R}(t)} = \sup \mathset{H_x \, : \, x \in B_N} \, .
	\end{equation}
	For the SRW on the torus, the covering time is asymptotically $g(0) N^d \log N$, see~\cite{aldous1983time}, and \cite{belius2013gumbel} proved that the \og{}last points\fg{} visited by the SRW are distributed as a Poisson point process with intensity proportional to the Lebesgue measure. This means that close to time $\mathfrak{C}_N$, the SRW will from time to time collect one of these points, in no particular order. We also refer to \cite{prévost2023phase} for an extensive study of those \og{}late points\fg{}. In our case, one can prove that $\mathfrak{C}_N \sim g(0) \alpha N^d \log |\Lambda_N|$ in probability as $N \to +\infty$, with $\alpha \defeq \lim_{N \to +\infty} \inf_{x \in B_N} \phi_N^2(x)$. In the case where $D$ is the unit ball and $B_N = B(0,(1-2\eps)N)$, one can also prove that the \og{}last points\fg{} distribute as a Poisson point process on the sphere.

	\subsection{Organization of the paper}


	We first explain in Section \ref{ssec:soft-local-times} the soft local times method, developed in~\cite{popov2015soft,teixeiracoupling}.
	It will be used to obtain a coupling of the entrance sites in~$B$ of excursions of the confined RW and tilted RI. This will actually be sufficient to create the coupling for the entire range.
	The key result is Theorem~\ref{th:couplage-CdM-diff-pi} below (borrowed from~\cite{teixeiracoupling}), that requires us to estimate three quantities: the invariant measure $\tilde{\pi}_\circ$ of entrance points in $B$ for both the confined RW and tilted RI (Section \ref{ssec:mes-invariante-1}), the mixing times $T^Y$ and $T^Z$ (Section \ref{ssec:tps-melange-1}) of the underlying Markov chains of entrance points, and the variances $\mathrm{Var}_{\pi} \rho^\circ_z$ (Section \ref{ssec:variance-1}) of some specific functions $\rho^\circ_z$ appearing in Theorems~\ref{th:soft-local-times}-\ref{th:couplage-CdM-diff-pi} below.
	In Section \ref{ssec:couplage-1}, we regroup the results to exhibit the coupling of Theorem \ref{th:couplage-CRW-entrelac-t_N}.

	\section{Some preliminary results}

	\subsection{The soft local time coupling method}\label{ssec:soft-local-times}
	
	The soft local time method was introduced in \cite{popov2015soft} as a way to decorrelate distant regions of the random interlacements. It was later expanded in \cite{teixeiracoupling} to couple the ranges of Markov processes on a finite state space $\Sigma$ which have the same invariant measure. In this section, we briefly summarize the method and the main coupling theorem of \cite{teixeiracoupling}, that will be the main tool for the coupling of the walk with the tilted interlacements.
	
	The key idea of is to simulate the range up to time $n \geq 1$ of a Markov chain with finite state space $\Sigma$ by using a Poisson point process $\eta$ on $\Sigma \times \RR_+$.
	The coupling then simply uses the same Poisson point process to simulate the ranges up to time $n$ of two different Markov chains.
	The construction of the range is done by choosing the right intensity measure on $\Sigma \times \RR_+$ and then  exploring the points of $\eta$ in the direction $\RR_+$, while collecting the corresponding elements of $\Sigma$ (the ``explored sites''). 
	 This exploration is constructed by means of the transition kernel of the Markov chain at each step, and it correctly builds up the range of the Markov chain up to time~$n$. At this time $n$, the construction has explored a part of $\Sigma \times \RR_+$ delimited (on the right/from above) by a ``line'' $G_n : \Sigma \longrightarrow \RR_+$, which is called the soft local time at step $n \geq 1$. 
	 The range is thus constituted by the first component of points $(z,v) \in \eta(\Sigma \times \RR_+)$ such that $v \leq G_n(z)$.

	 To summarize, to each Markov chain on $\Sigma$ one can associate soft local times $(G_n)_{n\geq 0}$, and the range up to time~$n$ is simply the set $\{z \in \Sigma, (z,v)\in \eta, G_n(z) \leq v\}$ of ``explored sites''. Using the same Poisson process $\eta$ to generate ranges of two Markov chains therefore produces a coupling of the two ranges; having inequalities on soft local times then amounts to having inclusion relations for the ranges.
	 Let us now introduce the necessary elements to be able to state Theorem~3.2 in~\cite{teixeiracoupling}, which controls the error in the coupling.
	
	Let $Y$ and $Z$ be two Markov chains with values in a finite space $\Sigma$, with invariant measure $\pi^Y$ and $\pi^Z$ respectively. Let us also consider a measure $\mu$ with full support in $\Sigma$ (that can be considered as a parameter one can play with). Writing ``$\circ$'' for either $Y$ or $Z$, we define
	\begin{equation}\label{eq:def-g-et-rho}
		g^\circ(z) \defeq \frac{\pi^\circ(z)}{\mu(z)} \quad , \quad \rho_z^\circ(w) \defeq \frac{p^\circ(w,z)}{\mu(z)}
	\end{equation}
	where $p^\circ$ is the transition kernel of the corresponding Markov chain.
	Define the mixing times $T^\circ_{\mix}$ as
	\begin{equation}\label{eq:def-temps-melange}
		T^\circ_{\mix} \defeq \inf \Big\{ t \geq 0 \, : \, \sup_{x \in \Sigma} \| p_t^\circ(x, \cdot) - \pi^\circ \|_{TV} \leq \tfrac14 \Big\} \, ,
	\end{equation}
where $\| \cdot \|_{TV}$ is the total variation distance, given by $\| \lambda - \nu \|_{TV} = \tfrac12 \sum_{z \in \Sigma} |\lambda(z) - \nu(z)|$.
For $\eps>0$, let us define for any $n \geq 1$ the following event on the ranges of $Y$ and $Z$:
	\begin{equation}\label{eq:softloc-event}
		\mathcal{G}_{n,\eps} \defeq \mathset{\mathset{Z_i}_{i = 1}^{(1-\eps)n} \subseteq \mathset{Y_i}_{i = 1}^{n} \subseteq \mathset{Z_i}_{i = 1}^{(1+\eps)n}} \, .
	\end{equation}
Let us also introduce
	\begin{equation}\label{eq:softloc-keps}
		k(\eps) \defeq - \inf_{\circ = Y, Z} \inf_{z \in \Sigma} \log_2 \frac{\pi^\circ_* \eps^2 g^\circ(z)^2}{6 \mathrm{Var}_{\pi^\circ} (\rho^\circ_z)} \, ,
	\end{equation}
	with $\pi^\circ_* \defeq \min_{z \in \Sigma} \pi^\circ(z)$, and the following condition on $\eps>0$:
	\begin{equation}\label{eq:softloc-epsilon}
		0 < \eps < \inf_{\circ = Y, Z} \inf_{z \in \Sigma} \frac{\mathrm{Var}_{\pi^\circ} (\rho^\circ_z)}{2 \| \rho^\circ_z \|_\infty g^\circ(z)} \, .
	\end{equation}
	
	\begin{theorem}[{\cite[Theorem 3.2]{teixeiracoupling}}]
	\label{th:soft-local-times}
Let $Y,Z$ be two Markov chains on a finite state space~$\Sigma$, with transition matrices $P^Y,P^Z$, starting distribution $\nu_Y,\nu_Z$, and with the same invariant probability measure $\pi$.
Then there are constants $C,c>0$ and there exists a (common) probability space $(\Omega, \mathscr{F},\QQ)$
such that, for any $\eps > 0$ that satisfies \eqref{eq:softloc-epsilon}, for any $n \geq 2k(\eps) \min\{T^Y_{\mix} , T^Z_{\mix}\}$, we have
		\begin{equation}\label{eq:borne-soft-local-times}
			1-\QQ \left( \mathcal{G}_{n,\eps} \right) \leq C \sum_{\circ = Y, Z} \sum_{z \in \Sigma} \left( e^{-cn\eps^2} + e^{-c n \eps \frac{\pi^\circ(z)}{\nu_\circ(z)}} + \exp \left( - c \frac{\eps^2 g^\circ(z)^2}{\mathrm{Var}_{\pi^\circ} (\rho^\circ_z)} \frac{n}{k(\eps) T_{\mix}^\circ} \right) \right) \, .
		\end{equation}
	\end{theorem}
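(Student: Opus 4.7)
The plan is to implement the soft--local--times method of~\cite{popov2015soft, teixeiracoupling} using a single Poisson point process $\eta$ on $\Sigma\times \RR_+$ with intensity $\mu \otimes \mathrm{Leb}$, shared by both chains. For each $\circ \in \{Y,Z\}$, starting from $G_0^\circ \equiv 0$, I build the trajectory greedily: given $(Y_{k-1}^\circ, G_{k-1}^\circ)$, set
\[
\xi_k^\circ \defeq \inf \mathset{t > 0 \,:\, \exists z\in\Sigma,\ (z,\ G_{k-1}^\circ(z) + t\,\rho^\circ_{Y_{k-1}^\circ}(z)) \in \eta}
\]
and let $Y_k^\circ$ be the realising site, with $G_k^\circ(\cdot) = G_{k-1}^\circ(\cdot) + \xi_k^\circ\,\rho^\circ_{Y_{k-1}^\circ}(\cdot)$. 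Standard mapping/thinning properties of Poisson processes ensure that $(Y_k^\circ)$ is a Markov chain with kernel $p^\circ$ and starting law $\nu_\circ$, the $\xi_k^\circ$ are i.i.d.\ $\mathrm{Exp}(1)$ and independent of the chain, the range up to step $n$ is exactly $\mathset{z : \exists (z,v)\in\eta,\ v\le G_n^\circ(z)}$, and the soft local time has the closed form $G_n^\circ(z) = \sum_{k=1}^n \xi_k^\circ\, \rho^\circ_{Y_{k-1}^\circ}(z)$. Sharing $\eta$ defines the coupling measure $\QQ$, and the key monotonicity principle follows: $G_{n_1}^Y \ge G_{n_2}^Z$ pointwise on $\Sigma$ implies $\mathset{Z_i}_{i=1}^{n_2} \subseteq \mathset{Y_i}_{i=1}^{n_1}$.

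By stationarity, $\Ebf_\pi[\rho^\circ_z(Y)] = \sum_w \pi(w) p^\circ(w,z)/\mu(z) = \pi(z)/\mu(z) = g(z)$, so $G_n^\circ(z)$ should be close to $n\,g(z)$. I would reduce $\mathcal{G}_{n,\eps}$ to this concentration statement by observing that on the event
\[
\mathcal{E}_{n,\eps} \defeq \bigcap_{\circ\in\{Y,Z\}} \bigcap_{z\in\Sigma} \mathset{\bigl|G_m^\circ(z) - m\, g(z)\bigr| \le \tfrac{\eps}{3}\, n\, g(z) \text{ for } m \in \mathset{(1-\eps)n,\, n,\, (1+\eps)n}}
\]
one directly gets $G_{(1-\eps)n}^Z \le G_n^Y \le G_{(1+\eps)n}^Z$ pointwise, hence $\mathcal{E}_{n,\eps} \subseteq \mathcal{G}_{n,\eps}$. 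After a union bound over the finite set $\Sigma$ and over $\circ \in \{Y,Z\}$, it is then enough to bound $\QQ(|G_n^\circ(z) - n g(z)| > \tfrac{\eps}{3} n g(z))$ for each fixed $z$ and $\circ$.

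The concentration is handled in two stages. \emph{Step 1 (exponential noise):} conditionally on $(Y_k^\circ)$, $G_n^\circ(z)$ is a weighted sum of i.i.d.\ $\mathrm{Exp}(1)$ with conditional mean $S_n^\circ(z) \defeq \sum_{k=1}^n \rho^\circ_{Y_{k-1}^\circ}(z)$. Bernstein's inequality for sub-exponential variables (with sub-exponential norm controlled by $\|\rho^\circ_z\|_\infty$) gives, on the typical event $S_n^\circ(z) \asymp n g(z)$, a bound $\QQ(|G_n^\circ(z) - S_n^\circ(z)| > \tfrac{\eps}{6} n g(z) \mid (Y_k^\circ)) \le 2\rme^{-c n \eps^2}$, producing the first term in~\eqref{eq:borne-soft-local-times}. \emph{Step 2 (chain noise):} one has to show $|S_n^\circ(z) - n g(z)| \le \tfrac{\eps}{6} n g(z)$. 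I would partition $\{1,\ldots,n\}$ into consecutive blocks of length $L \asymp k(\eps) T_{\mix}^\circ$, each contributing an almost-independent chunk whose mean is within $\mathrm{TV}$-distance $\tfrac14$ of its stationary value (by definition of $T_{\mix}^\circ$) and whose variance is of order $L \cdot \mathrm{Var}_\pi(\rho^\circ_z)$. A block Bernstein/Freedman inequality then yields a bound of the form $\exp\!\bigl(-c\, \eps^2 n g(z)^2 / (k(\eps) T_{\mix}^\circ\, \mathrm{Var}_\pi(\rho^\circ_z))\bigr)$, which is the third term. The middle term $\exp(-c n \eps\, \pi(z)/\nu_\circ(z))$ captures the ``burn-in'' error in the first block: starting from $\nu_\circ$ rather than $\pi$, the occupation time near $z$ in the first $\sim T_{\mix}^\circ$ steps deviates from its stationary value by an amount controlled by $\pi(z)/\nu_\circ(z)$.

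\emph{Main obstacle.} The hardest point is the Bernstein-type deviation bound for the Markov chain sum $S_n^\circ(z)$ that is \emph{uniform} in $z$, with an exponent genuinely scaling like $\eps^2 g(z)^2/\mathrm{Var}_\pi(\rho^\circ_z)$ rather than a crude $\|\rho^\circ_z\|_\infty^2$. Classical Lezaud/Paulin-type concentration gives the right order, but matching the precise form needed here (notably the $1/k(\eps)$ factor and the sharp threshold $n \ge 2 k(\eps)\min\{T_{\mix}^Y, T_{\mix}^Z\}$, which guarantees that enough almost-independent blocks of length $k(\eps) T_{\mix}^\circ$ fit inside $[1,n]$) requires a careful dyadic decomposition in the parameter $\mathrm{Var}_\pi(\rho^\circ_z)/g(z)^2$. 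The definition~\eqref{eq:softloc-keps} of $k(\eps)$ is tailored exactly so that summing the deviation bound over the dyadic scales and over $z\in \Sigma$ preserves the concentration: at the scale where $\mathrm{Var}_\pi(\rho^\circ_z)/g(z)^2 \asymp \pi_* \eps^2/6$, the union bound $|\Sigma|$ is absorbed by the prefactor $\pi_*$, which is the mechanism by which the minimum of $\pi$ enters through $k(\eps)$ rather than as an additional multiplicative loss.
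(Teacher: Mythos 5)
The paper does not prove this statement: Theorem~\ref{th:soft-local-times} is quoted verbatim from \cite[Theorem~3.2]{teixeiracoupling} and used as a black box, so there is no in-paper proof to compare your sketch against. Judged against the argument in the literature (traces of which appear in the commented-out notes in this paper's source), your scaffolding is the correct one: a single Poisson point process on $\Sigma\times\RR_+$ driving both chains, the closed form $G_n^\circ(z)=\sum_{k}\xi_k^\circ\,\rho^\circ_z(Y_{k-1}^\circ)$ (you wrote $\rho^\circ_{Y_{k-1}^\circ}(z)$, which with the paper's convention $\rho^\circ_z(w)=p^\circ(w,z)/\mu(z)$ is the transpose of what you want, though your subsequent computation $\Ebf_\pi[\rho^\circ_z]=g(z)$ uses the right convention), the monotonicity-of-ranges principle, the reduction of $\mathcal{G}_{n,\eps}$ to the $\eps/3$-margin concentration event $\mathcal{E}_{n,\eps}$ plus a union bound over $\Sigma$ and $\circ\in\{Y,Z\}$, and the split of $G_n^\circ(z)-n\,g(z)$ into Poisson noise and Markov-chain noise are all genuine features of the Popov--Teixeira/\v{C}ern\'y--Teixeira proof, and your elementary check that $\mathcal{E}_{n,\eps}\subseteq\mathcal{G}_{n,\eps}$ is correct.

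One concrete inaccuracy: the middle term $e^{-cn\eps\,\pi(z)/\nu_\circ(z)}$ is not a ``burn-in occupation-time over the first $\sim T_{\mix}^\circ$ steps'' effect. In the soft-local-times construction the very first step is sampled directly from the starting distribution, so $G_n^\circ(z)$ begins with a contribution $\xi_1\,\nu_\circ(z)/\mu(z)$ rather than $\xi_1\,\rho^\circ_z(Y_0^\circ)$; the middle exponential is simply the tail of that single $\mathrm{Exp}(1)$ variable, since $\QQ\bigl(\xi_1\,\nu_\circ(z)/\mu(z) > c\,\eps\,n\,g(z)\bigr) = e^{-c\,\eps\,n\,\pi(z)/\nu_\circ(z)}$ by $g(z)=\pi(z)/\mu(z)$. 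Your ``occupation time'' reading would not naturally yield an exponent linear in the ratio $\pi(z)/\nu_\circ(z)$. Finally, you correctly identify the block-Bernstein estimate for $S_n^\circ(z)=\sum_k \rho^\circ_z(Y_{k-1}^\circ)$ as the crux; be aware that the naive block-variance bookkeeping does not by itself give the $1/k(\eps)$ factor in the third exponent, and it is precisely the tailored definition~\eqref{eq:softloc-keps} of $k(\eps)$ (interacting with a dyadic decomposition, as you anticipate) that makes that step go through in \cite{teixeiracoupling}. Flagging this without closing it is acceptable for a sketch of a cited result.
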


	In this paper, we will couple Markov chains with close, but different invariant measures. With a similar proof, we can easily get the following coupling theorem.
	
	\begin{theorem}\label{th:couplage-CdM-diff-pi}
		Consider $\eps > 0$ that satisfies \eqref{eq:softloc-epsilon} and assume that the invariant measures of $Y$ and $Z$ satisfy the following: 
		\begin{equation}\label{eq:hyp-erreur-pi}
			\text{there is a $\delta \in (0, \tfrac12 \eps)$ such that} \quad \sup_{z \in \Sigma} \bigg| \frac{\pi^Y(z)}{\pi^Z(z)} - 1 \bigg| < \delta \, .
		\end{equation}
		Then, there is a coupling of $Y$ and $Z$ such that for any $n \geq 2k(\eps) \min\{T^Y_{\mix} , T^Z_{\mix}\}$, we have
		\begin{equation}\label{eq:borne-soft-local-times-diff}
			1-\QQ \left( \mathcal{G}_{n,\eps} \right) \leq C \sum_{\circ = Y, Z} \sum_{z \in \Sigma} \left( e^{-cn\eps^2} + e^{-c n \eps \frac{\pi^\circ(z)}{\nu_\circ(z)}} + \exp \left( - c \frac{\eps^2 g^\circ(z)^2}{\mathrm{Var}_{\pi^\circ} (\rho^\circ_z)} \frac{n}{k(\eps) T_{\mix}^\circ} \right) \right) \, .
		\end{equation}
	\end{theorem}

	\begin{proof}
		Recall that, using the soft local times method, the event $\mathcal{G}_{n,\eps}$ is equivalent to having $G_{(1 - \eps) n}^Z(z) \leq G_n^Y(z) \leq G_{(1 + \eps) n}^Z(z)$.
		Following the proof of \cite[Theorem 3.1]{teixeiracoupling}, we get a coupling such that
		\begin{equation}
			(1 - \tfrac14 \eps) n g^Y(z) \leq G_n^Y(z) \leq (1 + \tfrac14 \eps) n g^Y(z)
		\end{equation}
		as well as
		\begin{equation}
			 (1-\tfrac14 \eps) (1 \pm \eps)n g^Z(z) \leq G_{(1 \pm \eps) n}^Z(z) \leq (1+\tfrac14 \eps) (1 \pm \eps) n g^Z(z)
		\end{equation}
		both hold with high probability (that is one minus the right-hand side of \eqref{eq:borne-soft-local-times}).
		Then, \eqref{eq:hyp-erreur-pi} implies that uniformly in $z \in \Sigma$, we have $(1-\delta) g^Y(z) \leq g^Z(z) \leq (1+\delta) g^Y(z)$. In particular, using \eqref{eq:hyp-erreur-pi}, under the same coupling we have
		\begin{equation}\label{eq:SLT-Z-leq-gY}
				G_{(1 - \eps) n}^Z(z) \leq (1+\tfrac14 \eps) (1 - \eps) n g^Z(z) \leq (1+\tfrac14 \eps) (1 - \eps) n (1+\delta) g^Y(z) \, .
		\end{equation}
		We then observe that provided $\delta < \tfrac12 \eps$, the right-hand side of \eqref{eq:SLT-Z-leq-gY} is less than $(1 - \tfrac14 \eps) n g^Y(z)$, and therefore we have $G_{(1 - \eps) n}^Z(z) \leq G_n^Y(z)$. In similar fashion, we also get $G_n^Y(z) \leq G_{(1 + \eps) n}^Z(z)$ and thus the full result.
	\end{proof}

	\subsection{Confined random walk and eigenvectors estimates}

	Our global strategy for the proofs in this paper is to get back to SRW estimates, which are easier to obtain than for the confined RW, \textit{i.e.}\ under $\Pbf^N$. 
	We will mostly reduce to estimating probabilities of events that are localized in space and time: we are usually dealing with events that occur before exiting a specific finite domain in $D_N$. 
	
	Consider an event $A$ that depends on the trajectory up to $\tau_C := \min\{n, X_n \notin C \}$, the exit time of some subdomain $C \subset D_N$. 
	Then recalling the transition kernel $p_{N}(x,y)$ of $\Pbf^N$ from~\eqref{eq:def-p_N}, after telescoping the product of ratios of $\phi_N$'s, we can write
	\begin{equation}\label{eq:proba-htransform-RW-tuee}
		\Pbf^N_x (A) = \frac{1}{\phi_N(x)} \espRW{x}{\phi_N(X_{\tau_C}) \lambda_N^{-\tau_C} \mathbbm{1}_A} \, .
	\end{equation}
	Then, we usually want to show that the ratio of the $\phi_N$'s or the $\lambda_N^{-\tau_C}$ only contribute through multiplicative constants, meaning that\footnote{We use the notation $a_N \asymp b_N$ to denote the existence of positive constants $c, C$ that satisfy the following: there is a $N_0$ such that for all $N \geq N_0$, $c a_N \leq b_N \leq C a_N$.}
$\Pbf^N_x (A) \asymp \Pbf_x (A)$. To do so, we need a proper control of the ratios of $\phi_N$'s: this appears natural and should be known, but we were not able to find a reference in the literature. We will use the following result, that is a transcription of \cite[Corollary 1.12]{vecteurpropre}.

	\begin{proposition}\label{prop:encadrement-ratio}
	There is a positive constant $\kappa_1$, and some $N_1 \geq 1$ such that, for all $N\geq N_1$,
		\begin{equation}
			\kappa_1 \leq \inf_{x,y \in B_N^\eps} \frac{\phi_N(x)}{\phi_N(y)} \leq \sup_{x,y \in B_N^\eps} \frac{\phi_N(x)}{\phi_N(y)} \leq \frac{1}{\kappa_1} \, .
		\end{equation}
	\end{proposition}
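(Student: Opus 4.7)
The plan is to compare $\phi_N$ with the first Dirichlet eigenfunction of the Laplace--Beltrami operator on the continuum domain $D$. Write $\phi_D$ for that continuous eigenfunction, with associated eigenvalue $\lambda_D$ and normalization $\phi_D(x_0)=1$. By Perron--Frobenius, $\phi_D>0$ on $\mathring D$; combined with the smoothness of $\partial D$ and standard elliptic regularity, $\phi_D\in C^\infty(\mathring D)\cap C^0(\overline D)$. Since we assume $B(x_0^N,(\alpha+2\eps)N)\subset D_N$ for all large $N$, the compact set $\overline{B(x_0,\alpha+\eps)}$ lies strictly inside $\mathring D$, and there exist constants $0<c_-\le c_+$ such that $c_-\le \phi_D(y)\le c_+$ on it.

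The main step I aim for is a uniform discrete-to-continuum comparison: some $\rho_N\to 0$ such that
\[
\sup_{x\in B_N^\eps}\,\bigl|\phi_N(x)-\phi_D(x/N)\bigr|\,\le\,\rho_N\,.
\]
Granting this, the Proposition follows immediately: for $N$ large enough, $\rho_N<c_-/2$ forces $c_-/2\le \phi_N(x)\le 2c_+$ on $B_N^\eps$, so that $\phi_N(x)/\phi_N(y)\in[c_-/(4c_+),\,4c_+/c_-]$, and one takes $\kappa = c_-/(4c_+)$.

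The comparison can be obtained via the standard spectral-approximation route, which is also what is needed to derive \eqref{eq:encadrement-lambda} in the same appendix (as in Kuttler--Sigilito, already cited there). The two ingredients are: convergence of the rescaled eigenvalues, $N^2(1-\lambda_N)\to\lambda_D/(2d)$, via the variational characterization after embedding discrete functions into $H^1_0(D)$ by piecewise-affine interpolation (this also bounds a natural extension of $\phi_N$ in $H^1(D)$); and identification of subsequential $L^2$-limits of $\phi_N$ with $\phi_D$, using the simplicity of the first Dirichlet eigenvalue together with the normalization $\phi_N(x_0^N)=\phi_D(x_0)=1$.

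The hardest point will be upgrading $L^2$-convergence to uniform convergence on $B_N^\eps$; this demands an equicontinuity statement uniform in $N$, which follows from a discrete elliptic regularity (H\"older) estimate valid because the potential term $(1-\lambda_N)=O(N^{-2})$ is negligible on mesoscopic scales. Alternatively, and perhaps more directly for the Proposition as stated, one can prove a discrete Harnack inequality for positive solutions of $\Delta_d u=(1-\lambda_N)u$ on balls of radius $r=O(\eps N)$ (Harnack constant bounded because the effective potential is $O(r^2/N^2)=O(\eps^2)$) and chain it along a finite, $N$-independent number of overlapping balls covering $B_N^\eps$ inside $B_N^{2\eps}$---this yields the two-sided bound on ratios with $\kappa$ depending only on $\alpha,\eps,d$ and $D$, bypassing the continuous limit entirely.
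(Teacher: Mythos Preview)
Your second alternative---a discrete Harnack inequality for positive solutions of $\Delta_d u=(1-\lambda_N)u$ on balls of radius $O(\eta N)$, chained over a finite $N$-independent cover of $B_N^\eps$---is exactly the route the paper takes. The paper implements the local Harnack step probabilistically: for $y$ in a ball $B(x,\eta N)$ it writes
\[
1=\sum_{z\in C_N^{16\eta}}\frac{\phi_N(z)}{\phi_N(y)}\,\Ebf_y\big[\lambda_N^{-H}\indic{X_H=z}\big],
\]
where $H$ is the exit time from $B(x,16\eta N)$, and then shows the expectation is $\asymp N^{1-d}$ uniformly in $y\in B(x,\eta N)$ and $z$ on the exit sphere (lower bound trivial from $\lambda_N<1$ and the near-uniform harmonic measure; upper bound by splitting on the exit time and controlling $\lambda_N^{-kN^2}$ against exponential decay of the tail of $H$, choosing $\eta$ small enough). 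This gives $\phi_N(y)\asymp N^{1-d}\sum_z\phi_N(z)$ for all $y\in B(x,\eta N)$, hence the local ratio bound, and chaining finishes.

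Your first route---convergence of $\phi_N$ to the continuum eigenfunction $\phi_D$ uniformly on compacts of $\mathring D$---is a genuinely different argument that the paper does not pursue. It would yield more (an actual comparison $\phi_N(x)\approx\phi_D(x/N)$, not just ratio bounds), but at the cost of importing discrete elliptic regularity or equicontinuity machinery that you correctly flag as the delicate step. The paper's probabilistic Harnack is more self-contained for the purpose at hand and avoids any appeal to the continuum problem beyond the eigenvalue asymptotics~\eqref{eq:encadrement-lambda}.
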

	
	An important consequence of Proposition \ref{prop:encadrement-ratio} is that the values of $\phi_N$ inside $B_N^\eps$ are bounded away from $0$ and from $+\infty$ by constants that are independent from $N$. More precisely, \cite[Corollary 1.11]{vecteurpropre} states that there is a constant $\kappa_2 > 0$, and some $N_2 \geq 1$ such that, for all $N\geq N_2$,
	\begin{equation}\label{eq:phiN-borne}
		\kappa_2 \leq \inf_{x \in B_N^\eps} \phi_N(x) \leq \sup_{x \in B_N^\eps} \phi_N(x) \leq \frac{1}{\kappa_2} \, .
	\end{equation}
	
	Coming back to~\eqref{eq:proba-htransform-RW-tuee}, Proposition~\ref{prop:encadrement-ratio} already proves that when $C \subseteq B^\eps$, the ratio of $\phi_N$'s can be controlled by universal constants. On the other hand, since $C \subseteq D_N$, we will have $\tau_C \asymp N^2$ and the factor $(\lambda_N)^{-\tau_C}$ should also contribute up to a multiplicative constant, recalling~\eqref{eq:encadrement-lambda}, but this is slightly technical and needs to be dealt with properly.
	
	Let $C$ be a subset of $B_N^\eps$, define $\mathcal{F}_{\tau,C}=\{A\in \mathcal{F} \colon A \cap \{\tau_C \le t\} \in \mathcal{F}^X_t\}$ the natural filtration stopped at $\tau_C$. In the proofs, we will repeatedly use the following bounds: for $x \in C$ and any event $A \in \mathcal{F}_{\tau,C}$, we have
	\begin{equation}\label{eq:encadrement-proba}
		 \kappa_1 \Pbf_x(A) \leq \Pbf^N_x (A) \leq \frac{1}{\kappa_1} \left[ \rme^{c_0} \Pbf_x(A) + \sum_{k = 1}^{+\infty} \rme^{c_0 (k+1)} \Pbf_x(A,\tau_C \in [k,k+1) N^2) \right]  \, ,
	\end{equation}
	where $\kappa_1 > 0$ is the constant of Proposition~\ref{prop:encadrement-ratio} and $c_0$ from~\eqref{eq:encadrement-lambda}-\eqref{eq:encadrement-lambda2} (recall that $c_0$ can be chosen arbitrarily close to $\lambda/2d$, provided that $N$ is chosen large enough). 
	The challenge in the proofs will be to properly decorrelate the events $A$ and $\mathset{\tau_C \geq k N^2}$ so that the sum in \eqref{eq:encadrement-proba} has at most the same order as $\Pbf_x(A)$.

	Since in the following, $C$ will generally be chosen to be a ball or an annulus, we recall the following results that will be used to control the sum in \eqref{eq:encadrement-proba}. 
	Its proof, which builds upon  classical results on random walks (that we collect from~\cite{lawlerRandomWalkModern2010}), is given in Appendix~\ref{sec:appendix:fonctions-propres} --- it can easily be adapted to more general domains.
	For $R > r > 0$ and $x \in \ZZ^d$, we will denote $A_r^R(x)$ the annulus $\{z \in \ZZ^d \, : \, r \leq |z-x| \leq R \} = (B(x,R) \setminus B(x,r)) \cup \partial B(x,r)$, and $A_r^R \defeq A_r^R(0)$.

	\begin{proposition}\label{prop:rester-ds-boule-anneau}
		There are positive constants $c_b,c_a>0$ and $C > 0$ such that, for all $T \geq 1$, for all sequences $a_N > b_N \to +\infty$ such that $\limsup_{N \to +\infty} \tfrac{a_N}{b_N} < +\infty$, provided $N$ large enough we have
		\[ \sup_{z \in B_{a_N}} \probaRW{z}{S_{[0,T]} \subseteq B_{a_N}} \leq C e^{- c_b \frac{T}{a_N^2}} \quad , \quad \sup_{z \in {B_{a_N} \setminus B_{b_N}}} \probaRW{z}{S_{[0,T]} \subseteq A_{b_N}^{a_N}} \leq C e^{- c_a \frac{T}{(a_N - b_N)^2}} \, . \]
	\end{proposition}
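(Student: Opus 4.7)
\textbf{Plan for Proposition \ref{prop:rester-ds-boule-anneau}.}

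The plan is to reduce both bounds to a single \emph{exit estimate} and then iterate via the Markov property. Writing $\tau_U := \inf\{n \geq 0 \colon S_n \notin U\}$, the key intermediate claim is that there exist universal constants $C_0 > 0$ and $p_0 \in (0,1)$ such that, for all $N$ large enough,
\begin{equation*}
\inf_{z \in B_{a_N}} \Pbf_z\bigl(\tau_{B_{a_N}} \leq C_0\, a_N^2\bigr) \geq p_0 \qquad \text{and}\qquad \inf_{z \in A_{b_N}^{a_N}} \Pbf_z\bigl(\tau_{A_{b_N}^{a_N}} \leq C_0\, (a_N-b_N)^2\bigr) \geq p_0 \,.
\end{equation*}
Once this is granted, applying the strong Markov property inductively at the times $j\, C_0\, a_N^2$ (resp.\ $j\, C_0\, (a_N-b_N)^2$) for $j = 1, \ldots, k$ gives $\sup_{z} \Pbf_z(\tau_{B_{a_N}} > k\, C_0\, a_N^2) \leq (1-p_0)^k$, and similarly for the annulus. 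Choosing $k := \lfloor T/(C_0\, a_N^2)\rfloor$ (resp.\ $\lfloor T/(C_0\, (a_N-b_N)^2)\rfloor$) then yields the two announced exponential decays with $c_b = c_a := -\log(1-p_0)/C_0 > 0$ and $C := (1-p_0)^{-1}$. The cases where $T < C_0\, a_N^2$ (resp.\ $T < C_0\, (a_N-b_N)^2$) are trivial, as the right-hand side already exceeds $1$.

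To establish the exit estimate, my preferred route is the invariance principle. Let $R_N$ denote the relevant length scale: $R_N := a_N$ for the ball and $R_N := a_N - b_N$ for the annulus. After rescaling, the process $t \mapsto R_N^{-1}(S_{\lfloor R_N^2 t\rfloor} - z)$ converges in distribution as $N \to \infty$ to a Brownian motion on $\RR^d$ with covariance $d^{-1} I_d$, and the rescaled domain $R_N^{-1}(U - z)$ stays, uniformly in $z$ and in the sequences $(a_N),(b_N)$, in a well-behaved bounded configuration: for the ball it is contained in $B(0,2)$; for the annulus, using the hypothesis $\limsup a_N/b_N < \infty$, it is a uniformly bounded annular region whose (relevant) boundary is at distance at most $1$ from the origin. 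Since Brownian motion in such a domain reaches the boundary in time of order $1$ with uniformly positive probability, this transfers by tightness (or a standard Skorokhod-type coupling) to the SRW, yielding the claimed exit estimate with $p_0$ and $C_0$ independent of $z$ and~$N$. A more hands-on alternative projects $S_n - z$ onto a well-chosen unit vector $e_z$ (for instance, the radial direction pointing from $z$ to the nearest boundary component of $U$) and applies a classical one-dimensional running-maximum estimate to reach a displacement of order $R_N$ in time $C_0\, R_N^2$.

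The principal technical point is the \emph{uniformity} of the exit constants $p_0$ and $C_0$ with respect to $z$, the sequences $(a_N),(b_N)$, and $N$. For the ball this is essentially immediate after rescaling, since the rescaled domain is a fixed ball and the starting point lies in a compact. For the annulus, uniformity is the only place where the hypothesis $\limsup a_N/b_N < \infty$ genuinely enters the proof: it prevents a geometric degeneration of the rescaled annulus (in particular, it keeps the solid angle subtended by the boundary, as seen from the rescaled starting point, bounded below), ensuring that the exit probability from the rescaled domain in bounded time stays uniformly positive.
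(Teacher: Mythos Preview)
Your proposal is correct and follows essentially the same route as the paper: show that in a block of time of order $R_N^2$ (with $R_N = a_N$ for the ball and $R_N = a_N - b_N$ for the annulus) the walk leaves the domain with uniformly positive probability, then iterate via the Markov property. The only difference is in how the one-block estimate is obtained: the paper cites \cite{lawlerRandomWalkModern2010} for the ball and, for the annulus, bounds $\sup_{z}\Pbf_z\big(S_{(a_N-b_N)^2}\in A_{b_N}^{a_N}\big)\le c<1$ directly via the local CLT, whereas you phrase it as an exit-time estimate and propose the invariance principle or a radial projection --- all of these are equivalent here.
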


	We fix $\eps > 0$ that appears in the beginning of Section \ref{ssec:pseudo-h-transform} to have $c_a > \eps^2 c_0$. This choice may seem strange, since we introduced $\eps$ to create a buffer zone between $\partial D_N$ which would imply that larger $\eps$ is somewhat easier than small $\eps$. While it is true in this regard, simple RW estimates are easier for small $\eps$, hence our choice.
	
	\begin{proof}
		The first inequality is a consequence of \cite[Corollary 6.9.5 \& 6.9.6]{lawlerRandomWalkModern2010}. For the second inequality, we split $[0,T]$ into intervals of length $(a_N - b_N)^2$. Then, for all $z \in A_{b_N}^{a_N}$,
		\begin{equation}\label{eq:prop:rester-ds-anneau-boule:decoup}
			\begin{split}
				\probaRW{z}{S_{[0,T]} \subseteq A_{b_N}^{a_N}} &\leq \probaRW{z}{\forall i \in \mathset{1 , \dots , \lfloor \tfrac{T}{(a_N - b_N)^2} \rfloor}, S_{i(a_N - b_N)^2} \in A_{b_N}^{a_N}}\\
				&\leq \Big( \sup_{z \in {B_{a_N} \setminus B_{b_N}}} \probaRW{z}{S_{(a_N - b_N)^2} \in A_{b_N}^{a_N}} \Big)^{\lfloor \tfrac{T}{(a_N - b_N)^2} \rfloor}
			\end{split}
		\end{equation}
		Now, according to the local limit theorem \cite[Theorem 2.1.3]{lawlerRandomWalkModern2010}, the probability on the last line of \eqref{eq:prop:rester-ds-anneau-boule:decoup} is bounded from above by a constant $c \in (0,1)$ that only depends on the dimension. By the same theorem, this probability is also bounded from below by some $c' > 0$ that only depends on the dimension. Writing $c = e^{-c_a}$ for some $c_a > 0$ and taking $C = \tfrac{1}{c'}$ completes the proof.
	\end{proof}
	


	\subsection{Some estimates on $\Psi_N$-tilted interlacements}\label{ssec:propriete-entrelac-phiN}

	We provide in this section a few useful estimates, in particular on the tilted harmonic measure and the tilted capacities appearing in the definition of the tilted RI. To lighten the notation, we write $B$ for $B_N$.
	Recall the definition \eqref{eq:def:titled-equilibrium-mes} of the tilted equilibrium measure and capacity, as well as the fact that $\Psi_N \equiv \phi_N$ on $B^{\eps}$. For the $\Psi_N$-tilted RI, we have for any $K \subseteq B$ and any $x \in \partial K$, since $\bar{\Psi}_N(x) = \lambda_N \Psi_N(x)$ (recall \eqref{eq:phiharmonique}),
	\begin{equation}\label{eq:mes-harm-tilt-B}
		e_K^{\Psi_N}(x) = \Pbf^{\Psi_N}_x (\bar{H}_K = +\infty) \Psi_N(x) \bar{\Psi}_N(x)  =  \lambda_N \Psi_N^2(x) \Pbf^{\Psi_N}_x (\bar{H}_K = +\infty) \, ,
	\end{equation}
	with total mass $\cpc^{\Psi_N}(K)$. The tilted harmonic measure is then $\bar{e}_K^{\Psi_N} = e_K^{\Psi_N} / \cpc^{\Psi_N}(K)$. A useful identity is the so-called \og{}last exit decomposition\fg{} that we prove in the following lemma.
We introduce the Green function of the tilted RW:	
	\[
	G^{\Psi_N}(x,y)= \sum_{n \geq 0}  \Pbf^{\Psi_N}_x(X_n = y) \,.
	\]
Let us stress that this Green function is not symmetric in $x$ and $y$.

	\begin{lemma}
		Let $K \subseteq B^\eps$. We have for any $x \in B^\eps$,
		\begin{equation}\label{eq:LED-tilted}
			\frac{1}{\phi_N^2(x)} \sum_{y \in K} e_K^{\Psi_N}(y) \, G^{\Psi_N}(y,x) = \sum_{y \in K} \frac{1}{\phi_N^2(y)} e_K^{\Psi_N}(y) \,G^{\Psi_N}(x,y) = \lambda_N \Pbf^{\Psi_N}_x(H_K < +\infty) \, .
		\end{equation}
	\end{lemma}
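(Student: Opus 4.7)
\medskip
\noindent\textbf{Proof plan for Lemma (Last exit decomposition).} The identity is two equalities that should be handled by different tools: the leftmost one comes from reversibility of the $\Psi_N$-tilted walk, and the rightmost one is a standard last-exit decomposition, where the factor $\lambda_N$ appears because on $K\subseteq B$ the reversible measure takes a particularly simple form.

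\medskip
\noindent\textbf{Step 1: reversibility and the first equality.}
The $\Psi_N$-tilted RW is a walk on conductances $c(x,y)=\Psi_N(x)\Psi_N(y)$, so it is reversible with respect to the measure $\mu(x)=\Psi_N(x)\bar\Psi_N(x)$. In particular, for every $n\ge 0$ and any $x,y\in\ZZ^d$,
\[
\mu(x)\,\Pbf^{\Psi_N}_x(X_n=y) \;=\; \mu(y)\,\Pbf^{\Psi_N}_y(X_n=x)\,,
\]
and summing over $n$ gives $\mu(x)G^{\Psi_N}(x,y)=\mu(y)G^{\Psi_N}(y,x)$. For $z\in K\subseteq B$, the point $z$ and all of its neighbours lie in $B^\eps$, so $\Psi_N(z)=\phi_N(z)$ and $\bar\Psi_N(z)=\bar\phi_N(z)=\lambda_N\phi_N(z)$ by~\eqref{eq:phiharmonique}; hence $\mu(z)=\lambda_N\phi_N^2(z)$. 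Therefore, for $x,y\in K$,
\[
\phi_N^2(x)\, G^{\Psi_N}(x,y) \;=\; \phi_N^2(y)\, G^{\Psi_N}(y,x)\,,
\]
which, after dividing by $\phi_N^2(x)\phi_N^2(y)$ and multiplying by $e_K^{\Psi_N}(y)$ and summing over $y\in K$, gives precisely the first equality in~\eqref{eq:LED-tilted}.

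\medskip
\noindent\textbf{Step 2: transience of the tilted walk and the last-exit decomposition.}
By Proposition~\ref{prop:encadrement-ratio}, $\Psi_N$ is bounded above and below by universal positive constants on $\ZZ^d$ (since $\Psi_N=1$ outside $B^\eps$), so the conductances $c(x,y)=\Psi_N(x)\Psi_N(y)$ are comparable to those of the SRW and the $\Psi_N$-tilted walk is transient for $d\ge 3$. Writing $L_K=\sup\{n\ge 0:X_n\in K\}<+\infty$ almost surely, the standard decomposition by the last visit to $K$ gives, for $x\in K$,
\[
1 \;=\; \sum_{y\in K}\Pbf^{\Psi_N}_x\!\bigl(L_K<\infty,\, X_{L_K}=y\bigr)
\;=\; \sum_{y\in K}\sum_{n\ge 0}\Pbf^{\Psi_N}_x(X_n=y)\,\Pbf^{\Psi_N}_y(\bar H_K=+\infty)\,,
\]
by the strong Markov property at the (deterministic!) time $n$. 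Summing over $n$ yields
\[
\sum_{y\in K} G^{\Psi_N}(x,y)\,\Pbf^{\Psi_N}_y(\bar H_K=+\infty) \;=\; 1\,, \qquad x\in K\,.
\]

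\medskip
\noindent\textbf{Step 3: plugging in the equilibrium measure.}
On $K\subseteq B$, the expression \eqref{eq:def:titled-equilibrium-mes} reduces, again via $\bar\Psi_N(y)=\lambda_N\phi_N(y)$, to
\[
e_K^{\Psi_N}(y) \;=\; \lambda_N\,\phi_N^2(y)\,\Pbf^{\Psi_N}_y(\bar H_K=+\infty)\,,
\]
so that the middle sum of~\eqref{eq:LED-tilted} rewrites as
\[
\sum_{y\in K}\frac{1}{\phi_N^2(y)}\,e_K^{\Psi_N}(y)\,G^{\Psi_N}(x,y)
\;=\; \lambda_N\sum_{y\in K}G^{\Psi_N}(x,y)\,\Pbf^{\Psi_N}_y(\bar H_K=+\infty)\;=\;\lambda_N\,,
\]
using Step~2. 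Combined with Step~1 this proves both equalities.

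\medskip
\noindent The calculation is essentially bookkeeping; the only mild subtlety is to make sure that one may genuinely use $\bar\Psi_N=\lambda_N\phi_N$ (a relation which is only true on $D_N$ and with no boundary issues) at every $y\in K$ appearing in the sum, which is guaranteed by $K\subseteq B$ sitting at macroscopic distance from $\partial B^\eps$.
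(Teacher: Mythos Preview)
Your proof is correct and follows essentially the same approach as the paper: reversibility of the $\Psi_N$-tilted walk for the first equality, and a last-exit decomposition together with the identity $e_K^{\Psi_N}(y)=\lambda_N\phi_N^2(y)\,\Pbf^{\Psi_N}_y(\bar H_K=+\infty)$ on $K\subseteq B$ for the second. You supply a bit more detail than the paper (the explicit reversible measure $\Psi_N\bar\Psi_N$ and the transience argument via bounded conductances), but the argument is the same.
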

	
	\begin{proof}
		Note that the first equality is simply given by the fact that the reversible measure is given by $\Psi_N^2 = \phi_N^2$ on~$B$. For the second inequality, consider $x \in K$ and define $L_K:=\max\{n\geq 0, X_n\in K\}$ the last time the tilted RW is in $K$. Since the $\Psi_N$-tilted RW is transient, we have $L_K < +\infty$ $\Pbf^{\Psi_N}_x$-a.s. 
		Therefore,
		\[ \begin{split} 
		\Pbf^{\Psi_N}_x(H_K < +\infty) &= \sum_{n \geq 0} \sum_{y \in K} \Pbf^{\Psi_N}_x(L_K = n, X_n = y) \\
		& = \sum_{n \geq 0} \sum_{y \in K} \Pbf^{\Psi_N}_x(X_n = y) \Pbf^{\Psi_N}_y(\bar{H}_K = +\infty) \\
			&= \sum_{y \in K} G^{\Psi_N}(x,y) \Pbf^{\Psi_N}_y(\bar{H}_K = +\infty) \, .
		\end{split} \]
		By \eqref{eq:mes-harm-tilt-B}, we have $\Pbf^{\Psi_N}_y(\bar{H}_K = +\infty) = e_K^{\Psi_N}(y) / \lambda_N \phi_N^2(y)$ for all $y \in K \subseteq B$, which proves the lemma.
	\end{proof}
	
	In the following, we also need some estimates on the \textit{tilted} equilibrium measure and capacity of $B$, comparing them with their simple random walk analogues; the results use the fact that $B$ is a ball (at distance $\eps N$ from $\partial D_N$), but could easily be extended to ``smooth'' domains whose capacity is comparable to that of a ball.

\begin{lemma}\label{lem:encadrement-mes-harm-tilt}
There is a constant $c > 0$ such that for all $x \in \partial B$
		\begin{equation*}
			c\, N^{-(d-1)} \leq \bar{e}^{\Psi_N}_B(x) \leq c^{-1} N^{-(d-1)} \, ,
			\end{equation*} 
meaning that the tilted harmonic measure $\bar{e}^{\Psi_N}_B(x)$ is roughly uniform on $\partial B$.
Moreover, we have
\begin{equation}\label{eq:encadrement-cap-tilt}
c N^{d-2} \leq \cpc^{\Psi_N}(B) \leq c^{-1} N^{d-2} \, .
\end{equation}
\end{lemma}

In the course of the proof of Lemma \ref{lem:encadrement-mes-harm-tilt}, we will use estimates on the probability for the confined/tilted RW to go from $B$ to $\partial B^\eps$ and vice-versa. The following lemma shows that these probabilities are of the same order as for the SRW.

\begin{lemma}\label{lem:arg-mart-tilt-B-Beps}
	There is a constant $C > 0$ such that for any $N$ large enough, any $x \in \partial B$, any $y \in \partial B^\eps$,
	\begin{equation}
		\frac{1}{CN} \leq \Pbf^{\Psi_N}_x(\bar{H}_B > H_{\partial B_\eps}) \leq \frac{C}{N} \quad , \quad \frac{1}{CN} \leq \Pbf^{\Psi_N}_y(H_B < \bar{H}_{\partial B^\eps}) \leq \frac{C}{N} \, .
	\end{equation}
\end{lemma}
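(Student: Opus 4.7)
The plan is to reduce both inequalities to an estimate on a tilted-harmonic function in the annulus $C := B^\eps \setminus B$, and then to compare it with the analogous SRW quantity through a perturbation argument that exploits $1-\lambda_N = O(N^{-2})$. For $x \in \partial B$, the event $\{\bar H_B > H_{\partial B^\eps}\}$ forces the first step of the walk to leave $B$, so
\[
\Pbf^{\Psi_N}_x(\bar H_B > H_{\partial B^\eps}) = \sum_{z\sim x,\, z\notin B} p_{\Psi_N}(x,z)\, \Pbf^N_z(H_{\partial B^\eps} < H_B)\,,
\]
and the one-step weights are bounded above and below uniformly in $N$ by Proposition~\ref{prop:encadrement-ratio}. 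The task therefore reduces to proving $h(z) := \Pbf^N_z(H_{\partial B^\eps}<H_B) \asymp 1/N$ for $z \in C$ adjacent to $\partial B$.

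Set $g := \phi_N h$. Identity \eqref{eq:proba-htransform-RW-tuee} gives $g(z) = \Ebf_z\!\left[\lambda_N^{-\tau_C}\phi_N(S_{\tau_C})\mathbbm{1}_{S_{\tau_C}\in\partial B^\eps}\right]$, and $g$ satisfies $\bar g = \lambda_N g$ in the interior $C^\circ := C \setminus \partial B^\eps$ with boundary values $g \equiv 0$ on $\partial B$ and $g \equiv \phi_N$ on $\partial B^\eps$. Let $g_0(z) := \Ebf_z\!\left[\phi_N(S_{\tau_C})\mathbbm{1}_{S_{\tau_C}\in\partial B^\eps}\right]$ be the SRW-harmonic extension of the same boundary data. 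Then $g - g_0$ vanishes on $\partial C^\circ$ and satisfies $\Delta_d(g-g_0) = -(1-\lambda_N)g$ in $C^\circ$, giving the Green-function representation
\[
g(z) = g_0(z) + (1-\lambda_N)\sum_{w\in C^\circ} G_{C^\circ}(z,w)\, g(w)\,,
\]
where $G_{C^\circ}$ is the SRW Green function killed at $\partial B \cup \partial B^\eps$. The leading term $g_0(z) \asymp 1/N$ comes from $\phi_N \asymp 1$ on $\partial B^\eps$ (Proposition~\ref{prop:encadrement-ratio}) combined with the classical SRW annulus estimate $\Pbf_z(S_{\tau_C}\in\partial B^\eps) \asymp 1/N$, obtained by optional stopping on the approximately harmonic function $|z-x_0^N|^{2-d}$. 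For the correction term, the maximum principle yields $\|g\|_\infty \le \|\phi_N\|_\infty \le 1/\kappa$ (as $g\ge 0$ is SRW-superharmonic), and optional stopping on $|S_n - x_0^N|^2 - n$ gives $\Ebf_z^{\mathrm{SRW}}[\tau_C] \asymp N$ for $z$ near either boundary, since the expected exit time is driven by the rare ($\asymp 1/N$) traversal of the annulus of expected length $\asymp N^2$. With $1-\lambda_N \lesssim N^{-2}$ from \eqref{eq:encadrement-lambda}, these three bounds combine to give $|g(z) - g_0(z)| = O(1/N)$, hence $g(z) \asymp 1/N$ and $h(z) = g(z)/\phi_N(z) \asymp 1/N$.

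The second inequality for $y \in \partial B^\eps$ follows by the same first-step reduction (the walk must step into $C \setminus \partial B^\eps$, since otherwise it either hits $\partial B^\eps$ immediately or leaves $B^\eps$ and returns to $\partial B^\eps$ before $B$) and the perturbation argument applied to the harmonic function with boundary data swapped, the mirror bound $\Ebf_z^{\mathrm{SRW}}[\tau_C] \asymp N$ near $\partial B^\eps$ playing the same role. The \emph{main obstacle} is obtaining the sharp $C/N$ upper bound: a direct application of \eqref{eq:encadrement-proba} to the event $\{H_{\partial B^\eps} < H_B\}$ would pay weights $e^{c_0(k+1)}$ on excursions of length $kN^2$ and produce a polynomial loss in $N$ that the exponential tail of $\tau_C$ cannot fully cancel. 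The perturbation identity resolves this by trading the heavy weight $\lambda_N^{-\tau_C}$ for the small prefactor $1-\lambda_N \asymp N^{-2}$, which is then absorbed by the fact that $\Ebf_z^{\mathrm{SRW}}[\tau_C]$ is only \emph{linear} (not quadratic) in $N$ near the boundary of the annulus.
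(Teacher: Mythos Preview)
Your proof is correct and takes a genuinely different route from the paper's. The paper applies \eqref{eq:encadrement-proba} directly: the lower bounds drop out at once from $\Pbf^{\Psi_N}_x(A) \geq \kappa \Pbf_x(A)$ combined with the SRW gambler's-ruin estimate (Lemma~\ref{lem:gambler-ruin}), while for the upper bounds the paper decomposes on $\{\tau_C \in [k,k+1)N^2\}$ and must extract a $1/N$ prefactor in front of the exponential tail $e^{-c_a k/\eps^2}$. This is done by introducing an intermediate sphere $\partial B^\delta$ and proving a separate ``gambler's-ruin duration'' estimate (Claim~\ref{claim:esp-tps-retour-bord-anneau}) asserting $\Pbf_z(\bar H_B \wedge H_{\partial B^\delta} > (\delta N)^2) \leq c_\delta/N$.

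Your perturbation identity $g = g_0 + (1-\lambda_N)\,G_{C^\circ} g$ bypasses this machinery: you trade the multiplicative weight $\lambda_N^{-\tau_C}$ for the additive prefactor $1-\lambda_N \asymp N^{-2}$, and the only SRW input beyond gambler's ruin itself is $\Ebf_z[\tau_C] \leq CN$ near the boundary, which is one line of optional stopping. This is cleaner and avoids Claim~\ref{claim:esp-tps-retour-bord-anneau} entirely. Two small remarks. First, the bound $\|g\|_\infty \leq 1/\kappa$ follows directly from $g = \phi_N h$ with $h \leq 1$ and $\phi_N \leq 1/\kappa$ on $B^\eps$; your appeal to ``superharmonic $\Rightarrow$ bounded by boundary values'' is the wrong direction (superharmonicity controls the minimum, not the maximum). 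Second, your description of the obstacle is slightly unfair to the paper's method: the exponential tail \emph{does} cancel the weight $e^{c_0 k}$ (this is precisely why $\eps$ is fixed so that $c_a > \eps^2 c_0$); the genuine difficulty is recovering the $1/N$ prefactor, which the paper handles via Claim~\ref{claim:esp-tps-retour-bord-anneau} and you handle via the Green-function expansion.
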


Let us first proceed with the proof of Lemma \ref{lem:encadrement-mes-harm-tilt}.

	\begin{proof}[Proof of Lemma \ref{lem:encadrement-mes-harm-tilt}]
	Note that, by \eqref{eq:mes-harm-tilt-B}, we have
		\[ \bar{e}^{\Psi_N}_B(x) = \phi_N^2(x) \Pbf^{\Psi_N}_x (\bar{H}_B = +\infty) \Big / \sum_{z \in \partial B} \phi_N^2(z) \Pbf^{\Psi_N}_z (\bar{H}_B = +\infty) \, . \]
Our main goal is to prove that there exist constants $c,C > 0$, that do not depend on $N$ or $x \in \partial B$, such that 
		\begin{equation}\label{aprouver}
				\frac{c}{N} \leq \Pbf^{\Psi_N}_x (\bar{H}_B = +\infty) \leq \frac{C}{N} \,.
		\end{equation}
Then, thanks to Proposition \ref{prop:encadrement-ratio}, this will give that $\frac{c\kappa_2^2}{C |\partial B|} \leq \bar{e}^{\Psi_N}_B(x) \leq \frac{C}{c\kappa_2^2 |\partial B|}$ and conclude the estimates on the harmonic measure.
As far as the tilted capacity is concerned, we immediately get from~\eqref{aprouver}, recalling also~\eqref{eq:mes-harm-tilt-B}, that
		\[ \cpc^{\Psi_N}(B) = \sum_{x \in \partial B} e_B^{\Psi_N} (x) = \sum_{x \in \partial B} \phi_N^2(x) \Pbf^{\Psi_N}_x (\bar{H}_B = +\infty) \asymp \frac{1}{N} \sum_{x \in \partial B} \phi_N^2(x) \, . \]
Since $\phi_N(x) \in [\kappa_2, \kappa_2^{-1}]$ (recall \eqref{eq:phiN-borne}), we get the desired bounds on $\cpc^{\Psi_N}(B)$. Note that \eqref{eq:encadrement-cap-tilt} can also be deduced from Rayleigh's monotonicity law (see \cite[Theorem 9.12]{levin2017markov}) combined with \eqref{eq:phiN-borne}.
	
	\smallskip	
	It therefore only remains to prove~\eqref{aprouver}. Our proof consists in showing that $\Pbf^{\Psi_N}_x (\bar{H}_B = +\infty)$ is of the same order as $\Pbf^{\Psi_N}_x (\bar{H}_B > H_{\partial B^\eps})$, and then conclude using Lemma \ref{lem:arg-mart-tilt-B-Beps}.
	We rely on the use of the Markov property at time $H_{\partial B^\eps}$ ,
which gives us the following starting point:
	\begin{equation*}\label{eq:lem:encadrement-mes-harm-tilt:encadrement-proba}
		\Pbf^{\Psi_N}_x (\bar{H}_B > H_{\partial B^{\eps}}) \inf_{z \in \partial B^{\eps}} \Pbf^{\Psi_N}_z (\bar{H}_B = +\infty) \leq \Pbf^{\Psi_N}_x (\bar{H}_B = +\infty) \leq \Pbf^{\Psi_N}_x (\bar{H}_B > H_{\partial B^\eps}) \, .
	\end{equation*}
Then, thanks to Lemma \ref{lem:arg-mart-tilt-B-Beps}, we immediately get that
	\begin{equation}
		\frac{1}{C N} \inf_{z \in \partial B^{\eps}} \Pbf^{\Psi_N}_z (\bar{H}_B = +\infty) \leq \Pbf^{\Psi_N}_x (\bar{H}_B = +\infty) \leq \frac{C}{N} \, .
	\end{equation}
		
Hence, we now only need to show that there is a constant $c > 0$ such that, uniformly in~$N$, $\inf_{z \in \partial B^{\eps}} \Pbf^{\Psi_N}_z (\bar{H}_B = +\infty) \geq c$. Fix $z \in \partial B^\eps$, and consider a tilted RW trajectory starting at $z$. This trajectory may cross $\partial B^\eps$ several times before either going to $B$ or leaving to infinity. If the ratio of the corresponding probabilities of these ``final choice'' is not too close to $0$ or $1$, the two probabilities are bounded from both sides by constants in $(0,1)$.

We first rewrite $\Pbf^{\Psi_N}_z (\bar{H}_B = +\infty)$ as
\begin{equation}\label{eq:eviter-B-ratio}
	\Pbf^{\Psi_N}_z (\bar{H}_B = +\infty) = \frac{\Pbf^{\Psi_N}_z (\bar{H}_B = +\infty)}{\Pbf^{\Psi_N}_z (\bar{H}_B < +\infty) + \Pbf^{\Psi_N}_z (\bar{H}_B = +\infty)} = \left[ 1 + \frac{\Pbf^{\Psi_N}_z (\bar{H}_B < +\infty)}{\Pbf^{\Psi_N}_z (\bar{H}_B = +\infty)} \right]^{-1} \, .
\end{equation}

Let us denote
\[
p_1 \defeq \sup_{z \in \partial B^{\eps}} \Pbf^{\Psi_N}_z(H_B < \tau_1  < +\infty) \, , \qquad q_1 \defeq \inf_{z\in \partial B^{\eps}} \Pbf^{\Psi_N}_z\big( \tau_1=+\infty \big)  \, .
\]
Then, by decomposing on the time before going to $B$ and using the Markov property, we have
\begin{equation}\label{eq:dernier-choix:toucher-B}
	\Pbf^{\Psi_N}_z (\bar{H}_B < +\infty) \leq \sum_{k \geq 0} \Pbf^{\Psi_N}_z(X_k \in \partial B^\eps, k < H_B) p_1 \, .
\end{equation}
In the same way, by decomposing on the last time the tilted RW visits $B^\eps$ and using the Markov property,
\begin{equation}\label{eq:dernier-choix:partir}
	\Pbf^{\Psi_N}_z (\bar{H}_B = +\infty) \geq \sum_{k \geq 0} \Pbf^{\Psi_N}_z(X_k \in \partial B^\eps, k < H_B) q_1 \, .
\end{equation}
Combining \eqref{eq:dernier-choix:toucher-B} and \eqref{eq:dernier-choix:partir} with \eqref{eq:eviter-B-ratio}, we deduce
\begin{equation}
	\inf_{z \in \partial B^{\eps}} \Pbf^{\Psi_N}_z (\bar{H}_B = +\infty) \geq \frac{1}{1 + \frac{p_1}{q_1}} \, .
\end{equation}

It therefore remains to estimate $p_1$ and $q_1$: we prove below that $p_1\leq c_1/N$ while $q_1\geq c_2/N$, which gives the desired bound.

\smallskip
\noindent
{\it (i) Probability $p_1$.}
According to Lemma \ref{lem:arg-mart-tilt-B-Beps}, an excursion that enters $B^\eps$ has a probability at most $\tfrac{c_1}{N}$ of hitting~$B$, regardless of its starting point. Hence, we have that $p_1 \leq c_1/N$.

		
		\smallskip
		\noindent
		{\it (ii) Probability $q_1$. }
		Consider $w \in \partial B^\eps$ as well as the slightly larger ball $B_\eps' \defeq B(x_0^N, (\alpha+\eps)N + 4)$. Then, we have the lower bound
		\begin{equation}\label{eq:proba-eloigner-Beps}
			\Pbf^{\Psi_N}_w(H_{B^\eps \setminus \partial B^\eps} = +\infty) \geq \Pbf^{\Psi_N}_w(\bar{H}_{B^\eps} > H_{\partial B_\eps'}) \inf_{v \in \partial B_\eps'} \Pbf^{\Psi_N}_v(\bar{H}_{B_\eps'} = +\infty) \, .
		\end{equation}
First observe that since $\forall z \in \ZZ^d, \Psi_N(z) \in [\kappa_2 \wedge 1, 1 \vee \tfrac{1}{\kappa_2}]$ (see \eqref{eq:phiN-borne}), we have that $\Pbf^{\Psi_N}_w(\bar{H}_{B^\eps} > H_{B_\eps'}) \geq c$ for some constant $c > 0$ (it is enough to take $4$ straight steps from $B^{\eps}$ to~$B_{\eps}'$).
		
		On the other hand, since $\Psi_N \equiv 1$ on $\ZZ^d \setminus B^\eps$, we have $\bar{\Psi}_N \equiv 1$ on $\partial B_\eps' \cup (\ZZ^d \setminus B_\eps')$. Thus,
		\begin{equation}\label{eq:proba-escape-Beps}
			\inf_{v \in \partial B_\eps'} \Pbf^{\Psi_N}_v(\bar{H}_{B_\eps'} = +\infty) = \inf_{v \in \partial B_\eps'} \Pbf_v(\bar{H}_{B_\eps'} = +\infty) \geq \frac{c'}{N} \, ,
		\end{equation}
		where we used \cite[Proposition 6.4.2]{lawlerRandomWalkModern2010} for the second inequality.
		Therefore, combining \eqref{eq:proba-eloigner-Beps} and \eqref{eq:proba-escape-Beps}, there is a constant $c_2 > 0$ that does not depend on $N \geq N_0$ or $w \in \partial B^\eps$ such that $\Pbf^{\Psi_N}_w (\bar{H}_{B^\eps} = +\infty) \geq c_2 / N$. In conclusion, $q_1\geq c_2/N$.

	\end{proof}

	\begin{proof}[Proof of Lemma \ref{lem:arg-mart-tilt-B-Beps}]
A key result in our proof is the following ``gambler's ruin'' lemma, which can be extracted from Proposition~1.5.10 in \cite{lawler2013intersections} (we refer to Appendix \ref{appendix:gambler-ruin} for details on how to get this statement).

\begin{lemma}\label{lem:gambler-ruin}
		Let $z \in (B^\eps \setminus B) \cup \partial B$ and $w \in B^\eps \setminus B$ be such that there are $\iota, \jmath \in [0,1)$ and $\eta > 0$ such that
		\[ |z - x_0^N| - \alpha N \in [\eta, \tfrac{1}{\eta}] N^\iota \quad , \quad |w - x_0^N| - (\alpha + \eps) N \in -[\eta, \tfrac{1}{\eta}] N^\jmath \, . \]
		Then, there exist explicit constants that depend only on $\alpha, \eps, d$ such that for $N$ large enough,
		\begin{equation}\label{eq:proba-atteindre-avant-SRW}
			\begin{split}
				c_1 \tfrac{\eta}{2} N^{\iota-1} + \grdO(N^{-1}) \leq \, &\Pbf_z(H_{B^\eps} < \bar{H}_B) \leq c_2 \tfrac{2}{\eta} N^{\iota-1} + \grdO(N^{-1}) \, , \\
				c_1' \tfrac{\eta}{2} N^{\jmath-1}  + \grdO(N^{-1}) \leq \, &\Pbf_w(H_B < \bar{H}_{\partial B^\eps}) \leq c_2' \tfrac{2}{\eta} N^{\jmath-1}  + \grdO(N^{-1}) \, .
			\end{split}
		\end{equation}
	\end{lemma}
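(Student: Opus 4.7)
The strategy is to apply Proposition~1.5.10 of \cite{lawler2013intersections}, which is a classical ``gambler's ruin'' estimate for the SRW in a concentric annulus. By translation invariance we may work with balls centered at $x_0^N$, and with the pair of radii $r = \alpha N < R = (\alpha + \eps) N$. The cited proposition (after a harmless rewriting that uses the fact that, up to a multiplicative constant, $|\cdot|^{2-d}$ is the leading term of the Green function in $d\geq 3$) asserts that for any $z$ in the discrete annulus $A_r^R(x_0^N)$,
\begin{equation*}
\Pbf_z\bigl(H_{\partial B(x_0^N,R)} < \bar{H}_{B(x_0^N,r)}\bigr) = \frac{r^{2-d} - |z - x_0^N|^{2-d}}{r^{2-d} - R^{2-d}} + \grdO(N^{-1})\,,
\end{equation*}
and the analogous formula (with inner and outer roles swapped) for $\Pbf_w(H_{B(x_0^N,r)} < \bar{H}_{\partial B(x_0^N,R)})$. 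The two bounds of \eqref{eq:proba-atteindre-avant-SRW} are then read off from this formula by a Taylor expansion.

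For the first inequality, write $|z - x_0^N| = \alpha N + \eta' N^\iota$ with $\eta' \in [\eta, 1/\eta]$. Taylor-expanding $u \mapsto u^{2-d}$ around $u = \alpha N$ gives
\begin{equation*}
r^{2-d} - |z - x_0^N|^{2-d} = (d-2)\, \alpha^{1-d}\, \eta'\, N^{\iota - d + 1}\bigl(1 + \grdO(N^{\iota - 1})\bigr)\,,
\end{equation*}
since the quadratic Taylor correction is of relative order $N^{\iota - 1} = o(1)$, using that $\iota < 1$. The denominator equals the explicit positive constant $[\alpha^{2-d} - (\alpha+\eps)^{2-d}]\, N^{2-d}$. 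Dividing therefore yields a ratio of the form $\tilde c\, \eta'\, N^{\iota - 1} (1 + o(1))$ with $\tilde c \defeq (d-2)\alpha^{1-d} / [\alpha^{2-d} - (\alpha+\eps)^{2-d}]$. Choosing $c_1$ slightly below $\tilde c$ and $c_2$ slightly above (so as to absorb the $(1+o(1))$ factor) and using $\eta/2 \leq \eta' \leq 2/\eta$, we obtain the claimed two-sided bound, with the additive $\grdO(N^{-1})$ term inherited from the cited proposition.

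The second inequality is proved in an entirely symmetric fashion: for $w \in B^\eps \setminus B$ satisfying $|w - x_0^N| = (\alpha + \eps)N - \eta' N^\jmath$ with $\eta' \in [\eta, 1/\eta]$, a Taylor expansion around $R = (\alpha+\eps)N$ gives
\begin{equation*}
|w - x_0^N|^{2-d} - R^{2-d} = (d-2)\,(\alpha+\eps)^{1-d}\, \eta'\, N^{\jmath - d + 1}\bigl(1 + o(1)\bigr)\,,
\end{equation*}
and dividing by the same denominator yields the $N^{\jmath - 1}$ scaling with constants $c_1', c_2'$ chosen in the same spirit. The only point that requires mild care is the uniformity of the Taylor remainder and of the $\grdO(N^{-1})$ error from Proposition~1.5.10 over the whole admissible range of positions; since $\iota, \jmath < 1$ are fixed and $\eta' \in [\eta, 1/\eta]$, this is automatic, but the explicit constants must be chosen with a small slack to absorb the finite-$N$ corrections. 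No substantive obstacle is expected.
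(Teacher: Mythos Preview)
Your proof is correct and follows essentially the same approach as the paper: both invoke Proposition~1.5.10 of \cite{lawler2013intersections} and perform a Taylor expansion of $u\mapsto u^{2-d}$ around the relevant radius. The only cosmetic difference is that the paper writes the formula for the complementary probability $\Pbf_x(H_B < \bar{H}_{\partial B^\eps})$ and subtracts from~$1$, whereas you write the direct formula; the computations and constants are otherwise identical.
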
	
	
Then, for the lower bounds in Lemma~\ref{lem:arg-mart-tilt-B-Beps}, we use  \eqref{eq:encadrement-proba} to get
		\[ \Pbf^{\Psi_N}_x(\bar{H}_B > H_{\partial B_\eps}) \geq \kappa_1 \Pbf_x(\bar{H}_B > H_{\partial B_\eps}) \, , \qquad \Pbf^{\Psi_N}_y(H_B < \bar{H}_{\partial B^\eps}) \geq \kappa_1 \Pbf_y(H_B < \bar{H}_{\partial B^\eps}) \, , \]
then we simply apply~\eqref{eq:proba-atteindre-avant-SRW} for $\iota = 0$ and $\jmath = 0$.

Let us focus on the first upper bound in Lemma~\ref{lem:arg-mart-tilt-B-Beps}.
Using \eqref{eq:encadrement-proba} with $C = B^\eps \setminus B$ and $A = \mathset{\bar{H}_B > H_{\partial B^\eps}}$, we have for $x\in \partial B$,
		\begin{equation}\label{eq:lem:arg-mart-tilt-B-Beps:UB-encadrement}
			\Pbf^{\Psi_N}_x (\bar{H}_B > H_{\partial B^\eps}) \leq \frac{c}{N} + c \sum_{k \geq 1} e^{c_0 k} \probaRW{x}{kN^2 \leq H_{\partial B^\eps} < \bar{H}_B} \, ,
		\end{equation}
where we have used~\eqref{eq:proba-atteindre-avant-SRW} with $\iota=0$ to bound the first term $\mathbf{P}_x(H_{\partial B^\eps} < \bar{H}_B)$ by $c/N$.	
We fix a $\delta \in (0,\eps)$ then bound each term in the sum by splitting on whether $\partial B^\delta$ has been reached before time $\tfrac{k}{2} N^2$.		
If it is the case, we have
		\begin{equation}\label{eq:lem:encadrement-mes-harm-tilt:up1}
			\begin{split}
				&\probaRW{x}{H_{\partial B^\eps} \wedge \bar{H}_B \geq k N^2 \, , \, H_{\partial B^\delta} \leq \frac{k}{2} N^2} \\			 
				&\qquad\qquad\qquad \leq \probaRW{x}{H_{\partial B^\delta} < \bar{H}_B} \sup_{z \in \partial B^\delta} \probaRW{z}{H_{\partial B^\eps} \wedge \bar{H}_B > \frac{k}{2} N^2} 		
				\leq \frac{C}{\delta N} e^{- \frac{c_a}{2 \eps^2} k} \, ,
			\end{split}
		\end{equation}
		where for the last inequality we used \cite[Lem.~6.3.4]{lawlerRandomWalkModern2010} to bound the first factor and Proposition~\ref{prop:rester-ds-boule-anneau} for the second factor.
		
		In order to get a bound when the tilted RW does not reach $\partial B^\delta$ before time $\tfrac{k}{2} N^2$, we first make the following claim on simple random walks (about ``gambler's ruin duration''), whose proof we postpone to Appendix~\ref{appendix:gambler-ruin} (again, the result seems standard but we did not find it in the literature).
		
		\begin{claim}
			\label{claim:esp-tps-retour-bord-anneau}
			For all $\delta \in (0,\eps)$, there is a $c_\delta > 0$ such that
			\[
			\sup_{z \in \partial B} \probaRW{z}{\bar{H}_B \wedge H_{\partial B^\delta} > (\delta N)^2} \leq \frac{c_\delta}{N} \, , \qquad \sup_{w \in \partial B^\delta} \probaRW{w}{H_B \wedge \bar{H}_{ \partial B^\delta} > (\delta N)^2 } \leq \frac{c_\delta}{N} \,.
			\]
		\end{claim}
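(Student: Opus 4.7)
The plan is a first-moment estimate via optional stopping, combined with Markov's inequality. I focus on the first bound in the Claim, the second being proved by a symmetric argument.

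Set $T := \bar{H}_B \wedge H_{\partial B^\delta}$, the exit time of the annulus $B^\delta \setminus B$. The classical martingale $M_t := |X_t - x_0^N|^2 - t$ for SRW on $\ZZ^d$ (a martingale since $\Ebf[|X_{t+1}-y|^2 \mid X_t] = |X_t-y|^2 + 1$), combined with optional stopping --- justified because $T$ is the exit time of a bounded domain and $M_t$ has bounded increments --- gives
\begin{equation*}
\Ebf_z[T] = \Ebf_z\bigl[|X_T - x_0^N|^2\bigr] - |z - x_0^N|^2 \, .
\end{equation*}
By the nearest-neighbor structure, $|z - x_0^N|^2 = (\alpha N)^2 + \grdO(N)$ for $z \in \partial B$, while at time $T$ the walk is either on $\partial B$ (with $|X_T - x_0^N|^2 = (\alpha N)^2 + \grdO(N)$) or on $\partial B^\delta$ (with $|X_T - x_0^N|^2 = ((\alpha+\delta)N)^2 + \grdO(N)$). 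Substituting yields
\begin{equation*}
\Ebf_z[T] = \Pbf_z\bigl(H_{\partial B^\delta} < \bar{H}_B\bigr) \cdot (2\alpha\delta + \delta^2)\, N^2 + \grdO(N) \, .
\end{equation*}
The gambler's ruin estimate $\Pbf_z(H_{\partial B^\delta} < \bar{H}_B) = \grdO(1/(\delta N))$ follows by the same method as Lemma~\ref{lem:gambler-ruin}, replacing $\partial B^\eps$ by $\partial B^\delta$ and using the approximately-harmonic function $y \mapsto |y - x_0^N|^{2-d}$. This gives $\Ebf_z[T] \leq C_\delta N$ uniformly in $z \in \partial B$.

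A direct application of Markov's inequality then yields $\Pbf_z(T > (\delta N)^2) \leq \Ebf_z[T]/(\delta N)^2 \leq c_\delta / N$, which is the first inequality of the Claim. The second bound, starting from $w \in \partial B^\delta$, is proved identically: the same martingale identity applies, and the symmetric ruin estimate $\Pbf_w(H_B < \bar{H}_{\partial B^\delta}) = \grdO(1/(\delta N))$ is again extracted from the proof of Lemma~\ref{lem:gambler-ruin}. The only mildly technical point --- and the main obstacle to a fully streamlined write-up --- is controlling the $\grdO(N)$ overshoot terms in the expansion of $|X_T - x_0^N|^2$, but this is routine thanks to the nearest-neighbor structure of the walk and the smoothness of $\partial B, \partial B^\delta$ at scale $N$.
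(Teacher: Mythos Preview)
Your proof is correct and takes essentially the same approach as the paper's: optional stopping with the martingale $|X_t - x_0^N|^2 - t$, the gambler's-ruin estimate $\Pbf_z(H_{\partial B^\delta} < \bar{H}_B) = \grdO(1/N)$, and Markov's inequality. The only cosmetic difference is that the paper starts the martingale from a neighbor $x \sim z$ with $x \notin B$ (which makes the sign inequality $|X_{\tau_\delta}|^2 \le |x|^2$ on $\{X_{\tau_\delta}\in B\}$ exact) and then bounds the \emph{conditional} expectation $\Ebf_x[\tau_\delta \mid X_{\tau_\delta}\in B]$, whereas you bound $\Ebf_z[T]$ directly and carry an $\grdO(N)$ overshoot; both routes give the same $\grdO(N)$ bound on the expected exit time.
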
	
		
		Using the first inequality in Claim~\ref{claim:esp-tps-retour-bord-anneau} and Proposition \ref{prop:rester-ds-boule-anneau}, we get, for $x\in \partial B$,
\begin{equation}\label{eq:lem:encadrement-mes-harm-tilt:up2}
			\begin{split}
				&\probaRW{x}{H_{\partial B^\eps} \wedge \bar{H}_B \geq k N^2 \, , \, H_{\partial B^\delta} > \frac{k}{2} N^2}\\ &\qquad \qquad\qquad\leq \probaRW{x}{\bar{H}_B \wedge H_{\partial B^\delta} > \frac{k}{2} N^2} \sup_{z \in B^\delta \setminus B} \probaRW{z}{H_{\partial B^\eps} \wedge \bar{H}_B > \frac{k}{2} N^2} 
				\leq \frac{c}{N} e^{- c_a \frac{k}{2 \eps^2}} \, , 
			\end{split}
		\end{equation}
		where $c > 0$ only depends on $\delta$ and the dimension.		
		Assembling \eqref{eq:lem:encadrement-mes-harm-tilt:up1} and \eqref{eq:lem:encadrement-mes-harm-tilt:up2} with \eqref{eq:lem:arg-mart-tilt-B-Beps:UB-encadrement} yields
		\[ \Pbf^{\Psi_N}_x (\bar{H}_B > H_{\partial B^\eps}) \leq \frac{c}{N} + \frac{c}{N} \sum_{k \geq 1} \exp \left( c_0 k - \frac{k}{\eps^2} c_a \right) \, , \]
		and the last sum is finite thanks to our assumption $c_a > \eps^2 c_0$.
This proves the upper bound on $\Pbf^{\Psi_N}_x (\bar{H}_B > H_{\partial B^\eps})$ for $x \in \partial B$. 

The upper bound on $\Pbf^{\Psi_N}_y(H_B < \bar{H}_{\partial B^\eps})$ for $y \in \partial B^\eps$ follows from the same arguments, using the second part of Claim~\ref{claim:esp-tps-retour-bord-anneau}.
\end{proof}

Some easy adaptation of the previous proof actually gives the following lemma.

\begin{lemma}\label{lem:toucher-B-avant-Beps-start-Delta}
For $\gamma \in (0,1)$ fixed, for any $\eta>0$ there is a constant $c_{\eta}$ such that uniformly in $y$ satisfying $ \eta N^{\gamma} \leq d(y,B) \leq \eta^{-1} N^\gamma$, we have
		\begin{equation}\label{eq:borne-inf-proba-Delta-infini}
			c_{\eta} N^{\gamma - 1} \leq \Pbf^{\Psi_N}_y (\bar{H}_B = +\infty) \leq \Pbf^{N}_y (\bar{H}_B > H_{\partial B^\eps}) \leq c_{\eta}^{-1} N^{\gamma - 1} \, .
		\end{equation}
\end{lemma}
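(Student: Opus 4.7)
The plan is to mimic closely the proofs of Lemma~\ref{lem:arg-mart-tilt-B-Beps} and of the lower bound in Lemma~\ref{lem:encadrement-mes-harm-tilt}, but with starting point at mesoscopic distance $N^\gamma$ from $B$ instead of on $\partial B$ or $\partial B^\eps$. The central inequality $\Pbf^{\Psi_N}_y (\bar{H}_B = +\infty) \leq \Pbf^{N}_y (\bar{H}_B > H_{\partial B^\eps})$ is essentially free: since $\Psi_N \equiv \phi_N$ on $B^\eps$, the two laws $\Pbf^{\Psi_N}$ and $\Pbf^N$ coincide on trajectories that stay in $B^\eps$, and any trajectory avoiding $B$ forever must in particular reach $\partial B^\eps$ before hitting $B$.

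For the upper bound, I would apply the bound~\eqref{eq:encadrement-proba} with $C = B^\eps \setminus B$ and $A = \{\bar{H}_B > H_{\partial B^\eps}\}$:
\[
\Pbf^N_y(\bar{H}_B > H_{\partial B^\eps}) \leq \tfrac{1}{\kappa}\,\rme^{c_0} \Pbf_y(\bar{H}_B > H_{\partial B^\eps}) + \tfrac{C}{\kappa}\sum_{k\geq 1} \rme^{c_0(k+1)} \Pbf_y\bigl(\bar{H}_B > H_{\partial B^\eps},\,\tau_C \in [k,k{+}1)N^2\bigr).
\]
The first term is already of the desired order $N^{\gamma-1}$ by applying Lemma~\ref{lem:gambler-ruin} with $\iota=\gamma$ (since $d(y,B)\in[\eta,\eta^{-1}]N^\gamma$). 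Each term in the sum is split, exactly as in~\eqref{eq:lem:encadrement-mes-harm-tilt:up1}–\eqref{eq:lem:encadrement-mes-harm-tilt:up2}, according to whether $\partial B^\delta$ has been reached before time $\tfrac{k}{2}N^2$: if it has, I bound by $\Pbf_y(H_{\partial B^\delta}<\bar{H}_B)\sup_z \Pbf_z(S_{[0,kN^2/2]}\subseteq A_0^{\eps N}(\cdot))$, the first factor being $O(N^{\gamma-1})$ by Lemma~\ref{lem:gambler-ruin} and the second being $O(\rme^{-c_a k/2\eps^2})$ by Proposition~\ref{prop:rester-ds-boule-anneau}; if it has not, I use an analogue of Claim~\ref{claim:esp-tps-retour-bord-anneau} adapted to a starting point $y$ at distance $\sim N^\gamma$ from $B$ (the usual gambler's-ruin duration estimate gives $O(N^{\gamma-1})$) times Proposition~\ref{prop:rester-ds-boule-anneau} on the annulus $B^\eps\setminus B^\delta$. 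Summing in~$k$ converges thanks to the assumption $c_a>\eps^2 c_0$, yielding a total of $O(N^{\gamma-1})$.

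For the lower bound, the strong Markov property at $H_{\partial B^\eps}$ gives
\[
\Pbf^{\Psi_N}_y(\bar{H}_B=+\infty)\;\geq\;\Pbf^{\Psi_N}_y(\bar{H}_B>H_{\partial B^\eps})\;\cdot\;\inf_{z\in\partial B^\eps}\Pbf^{\Psi_N}_z(\bar{H}_B=+\infty).
\]
The infimum is bounded below by a positive constant independent of $N$, which is precisely what was shown in the course of the proof of Lemma~\ref{lem:encadrement-mes-harm-tilt}. For the first factor, I use the left-hand inequality in~\eqref{eq:encadrement-proba} to get $\Pbf^{\Psi_N}_y(\bar{H}_B>H_{\partial B^\eps})\geq \kappa\,\Pbf_y(\bar{H}_B>H_{\partial B^\eps})$, and the SRW lower bound in Lemma~\ref{lem:gambler-ruin} (with $\iota=\gamma$) finishes the job.

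The main obstacle is the bookkeeping in the upper bound: one must check that the variant of Claim~\ref{claim:esp-tps-retour-bord-anneau} starting from a mesoscopic point still produces the $N^{\gamma-1}$ prefactor (rather than the $N^{-1}$ one obtained when starting on $\partial B$), and that the uniformity in $y$ (with $\eta\leq d(y,B)/N^\gamma\leq \eta^{-1}$) is preserved through Lemma~\ref{lem:gambler-ruin}. Once this is verified, the geometric summation in $k$ works as before and the two sides match at order $N^{\gamma-1}$.
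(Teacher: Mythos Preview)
Your proposal is correct and follows essentially the same route as the paper's proof: both reduce to adapting Lemmas~\ref{lem:encadrement-mes-harm-tilt} and~\ref{lem:arg-mart-tilt-B-Beps} by invoking Lemma~\ref{lem:gambler-ruin} with $\iota=\gamma$ instead of $\iota=0$, and both flag that the only nontrivial point is the variant of Claim~\ref{claim:esp-tps-retour-bord-anneau} for a starting point at distance $\asymp N^\gamma$ from $B$ (which the paper handles in Remark~\ref{rem:adapt-pt-depart-Delta}). Your identification of the ``main obstacle'' matches the paper's own pointer exactly.
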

	
\begin{proof}
	With the same arguments as in the proof of Lemma \ref{lem:encadrement-mes-harm-tilt}, the proof boils down to proving the existence of positive constants $c,c'$, that do not depend on $N$ or $y$ such that $\eta N^{\gamma} \leq d(y,B) \leq \eta^{-1} N^\gamma$, for which:
	\begin{equation}
		c N^{\gamma - 1} \leq \Pbf^{\Psi_N}_y (\bar{H}_B > H_{\partial B^\eps}) \leq c' N^{\gamma - 1} \, .
	\end{equation}
	This amounts to small changes in the proof of Lemma \ref{lem:arg-mart-tilt-B-Beps}, namely the gambler's ruin argument \eqref{eq:proba-atteindre-avant-SRW} now gives $N^{\gamma - 1}$ instead of $N^{-1}$.
One also needs to adapt Claim~\ref{claim:esp-tps-retour-bord-anneau} to get an upper bound $\probaRW{y}{\bar{H}_B \wedge H_{\partial B^\delta} > (\delta N)^2}\leq c N^{\gamma - 1}$, but its proof is easily adapted, see Comment~\ref{rem:adapt-pt-depart-Delta}.
In particular, the inequalities~\eqref{eq:lem:encadrement-mes-harm-tilt:up1}-\eqref{eq:lem:encadrement-mes-harm-tilt:up2} still hold, with a prefactor $N^{\gamma - 1}$ instead of $N^{-1}$.
\end{proof}
	
\begin{remark}
	As it was previously mentioned, we could have taken $B$ to be another shape than a ball, provided this shape is ``$s N$-regular'' for some $s > 0$, see Comment~\ref{rem:shape}.
Then, Lemma~5.5 and Proposition~5.6 of \cite{teixeiracoupling} ensure that the estimates on SRW used in the proofs of Lemmas~\ref{lem:encadrement-mes-harm-tilt} and~\ref{lem:arg-mart-tilt-B-Beps} are still valid.
\end{remark}

%
	\section{Preliminary estimates for the coupling}\label{ssec:chaines-markov}

	The main tool to prove Theorem \ref{th:couplage-CRW-entrelac-t_N} will be the soft local time coupling Theorem~\ref{th:soft-local-times}, applied to the Markov chain that is constituted by the entrance and exit points of excursions from $\partial B$ to an exterior set $\Delta$ at a given distance from $B$.
	
	Recall that $B$ is a discrete Euclidean ball centered at $x_0^N \in D_N$ with radius $\alpha N$, that satisfies that $\partial B$ is at distance at least $2\eps N$ from $\partial D_N$.
	In Theorem \ref{th:couplage-CRW-entrelac-t_N} we fix $\delta > 0$ to have
	\[ t_N/N^{2+\delta} \to +\infty \quad , \quad \eps_N = N^{-\delta/4} \to 0 \, . \]
	We also fix $\gamma$ that satisfies $1 - \frac{\delta}{4(d-2)} < \gamma < 1$, as well as $\Delta$ by
	\begin{equation}\label{eq:def-Delta}
		\Delta = \Delta_N =  \mathset{x \in D_N \, : \, d(x,B) > N^\gamma} \, .
	\end{equation}
	In all the following, constants will be usually denoted by $C, c, c'$ and will always only depend on $D, \alpha, \eps, \delta, d$ and $\gamma$.
	
	For reasons that will become clear later, we define the boundary of $\Delta$ as only the points of the actual boundary that are \og{}truly\fg{} inside $D_N$, that is $\partial \Delta \defeq \mathset{x \in \Delta \, : \, \exists y \in \Delta^c \cap D_N, x \sim y}$. In another word, $\partial \Delta = \partial B(x_0^N, \alpha N + N^\gamma)$.

\subsection{Notation}
\label{ssec:defYZ}

We will couple the entrance and exit points of confined RW and tilted RI random walk excursions from $B$ to $\partial \Delta$, so we need to introduce some notation for both these objects.
For convenience, we adopt similar notation as in~\cite{teixeiracoupling}.

\medskip
First, consider the confined RW, with transitions given by $p_N$. We define the set of hitting times $\mathcal{R} \defeq \mathset{R_i}_{i \geq 1}$ of $B$ and exit time $\mathcal{D} \defeq \mathset{D_i}_{i \geq 0}$ on $\partial \Delta$ as $D_0 = H_\Delta$ and for $i \geq 1$,
		\begin{equation}\label{eq:def-instants}
			R_i \defeq H_B \circ \theta_{D_{i-1}} + D_{i-1} \, , \quad D_i = H_\Delta \circ \theta_{R_i} + R_i \, ,
		\end{equation}
	with $\theta_t$ the time-shift operator at time $t$.
	The process $Y_i \defeq (X_{R_i},X_{D_i})$ for $i \geq 1$ is a Markov chain on $\Sigma:=\partial B \times \partial \Delta$,
	with transition probabilities
		\begin{equation}\label{eq:noyau-Y}
			p^Y((z,w),(x,y)) = \Pbf_x^N(Y_{i+1} = (x,y) \, | \, Y_i = (z,w)) = \Pbf^N_w (X_{H_B} = x) \Pbf_x^N (X_{H_\Delta} = y) \, ,
		\end{equation}
	and starting distribution, under $\Pbf^N_{\phi_N^2}$,
	\begin{equation}
		\label{def:nuY}
		 \nu_Y(x,y) \defeq \Pbf^N_{\phi_N^2} (Y_1 = (x,y)) = \frac{1}{\| \phi_N\|_2^2} \sum_{z \in D_N} \phi_N^2(z) \Pbf^N_z (X_{R_1} = x) \times \Pbf^N_x (X_{H_\Delta} = y) \, .
		\end{equation}

\medskip
	On the other hand, we want to create a Markov chain $Z$ which will contain the information about the tilted interlacement $\mathscr{I}_{\Psi_N}(u_N)$ restricted to $B$. 
	To this end, recall \eqref{eq:entrelac-tilt-simul}: $\mathscr{I}_{\Psi_N}(u_N) \cap B$ can be decomposed into independent trajectories $\mathset{w^j ; j \geq 0}$ with law $\Pbf^{\Psi_N}_{\bar{e}_B^{\Psi_N}}$.
	 
	\noindent Since there is an almost surely finite number $N_B^{\Psi_N, u_N}$ of trajectories $(w^j)_{j \geq 1}$ that touch $B$ we can order them by their label $(u_j)_{j \geq 1}$. 
	To each trajectory $w^j$ correspond sets $\mathcal{R}^j$ and $\mathcal{D}^j$ which collect the entrance and exit times of $w^j$ as in \eqref{eq:def-instants}:
	$D^j_0 = H_\Delta(w^j)$ and for $i \geq 1$,
	\begin{equation}\label{eq:def-instants-entrelac}
		R^j_i \defeq H_B(w^j) \circ \theta_{D^j_{i-1}} + D^j_{i-1} \, , \quad D^j_i = H_\Delta(w^j) \circ \theta_{R^j_i} + R^j_i \, .
	\end{equation}
	Write $T^{j} = |\mathcal{R}^j| = |\mathcal{D}^j| - 1$ for the number of excursions $(w^j_i)_{1 \leq i \leq T^j}$ of the $j$-th trajectory. We can define for all $k \in \NN$ the random variable
	\begin{equation}
		Z_k = (w^j_i(0),w^j_i(H_\Delta)) \, , \quad \text{with} \quad k = i + \sum_{p = 1}^{j-1} T^p \, .
	\end{equation}
The resulting process $Z = (Z_k)_{k \geq 0}$ is a Markov chain on $\Sigma$ with transition probabilities
		\begin{equation}\label{eq:noyau-Z}
			p^Z((z,w),(x,y)) = \left( \Pbf^{\Psi_N}_w(H_B < +\infty , X_{H_B} = x) + \Pbf^{\Psi_N}_w(H_B = +\infty) \bar{e}^{\Psi_N}_B(x) 	\right) \Pbf_x^N (X_{H_\Delta} = y) \, ,
		\end{equation}
	and starting distribution 
\begin{equation}
\label{def:nuZ}
\nu_Z(x,y) = \bar{e}^{\Psi_N}_B(x) \Pbf_x^N (X_{H_\Delta} = y) \,.
\end{equation}	

\begin{figure}[h]\label{fig:excursions}
	\centering
	\caption{Some excursions of the confined RW (left) and of two trajectories $w^j, w^{j'}$ of the tilted RI (right).}
	\includegraphics[width=7cm]{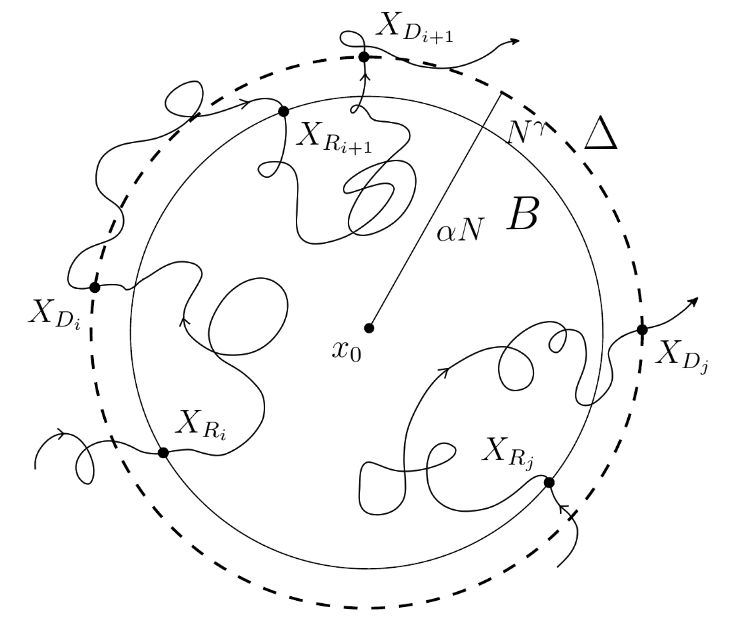}
	\includegraphics[width=7.5cm]{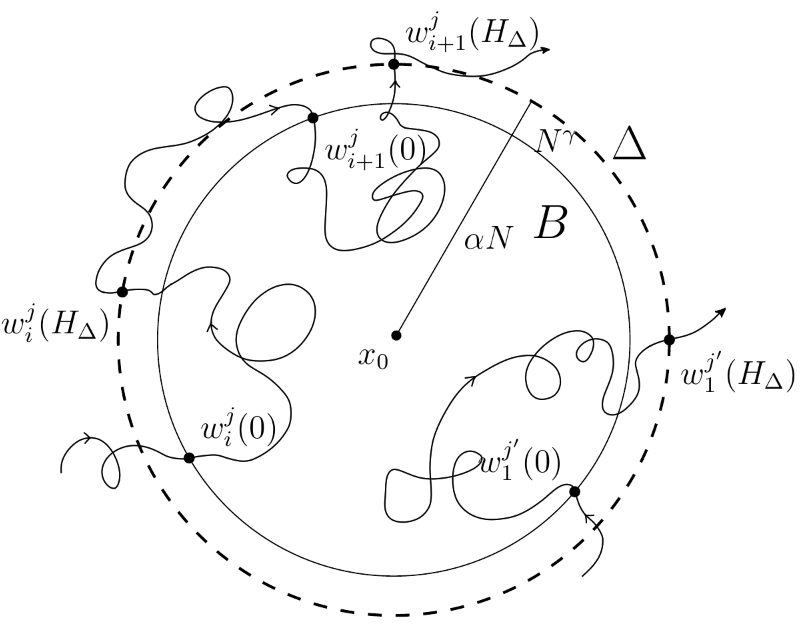}
\end{figure}

	The proof of Theorem \ref{th:couplage-CRW-entrelac-t_N} then mostly consists of applying Theorem \ref{th:soft-local-times} to the Markov chains~$Y$ and~$Z$. To do so, we need the following four results:
	\begin{itemize}
		\item In Section \ref{ssec:mes-invariante-1}, we work out the invariant measures of the Markov chains $Y$ and $Z$, and we control their difference in order to apply Theorem \ref{th:couplage-CdM-diff-pi}.
		\item We prove bounds on the mixing times (Section \ref{ssec:tps-melange-1}) and variances (Section \ref{ssec:variance-1}) that appear in the upper bound \eqref{eq:borne-soft-local-times-diff} of the theorem.
	\end{itemize}

After that, we estimate in Section \ref{ssec:nb-excursions} the required size of the range of $Y$ and $Z$, i.e. the number of excursions of the confined RW before time $t_N$ and of the tilted RI $\mathscr{I}_{\Psi_N}(u_N)$. This will give us the time $n$ in Theorem \ref{th:soft-local-times}.

\begin{definition}
	We define the $(\phi_N, \Delta)$-equilibrium measure $e_\Delta^{\phi_N}$ on $\partial B$ and its total mass $\cpc_\Delta^{\phi_N}(B)$ by
	\begin{equation}\label{eq:def-mes-harm-capacite-phi}
		e_\Delta^{\phi_N}(x) = \phi_N^2(x) \Pbf^N_x (H_\Delta < \bar{H}_B) \mathbbm{1}_{\partial B}(x) \quad , \quad \cpc_\Delta^{\phi_N}(B) = \sum_{x \in \partial B} \phi_N^2(x) \Pbf^N_x (H_\Delta < \bar{H}_B) \, .
	\end{equation}
	We write $\tilde{e}_\Delta^{\phi_N}$ for the probability measure $e_\Delta^{\phi_N} / \cpc_\Delta^{\phi_N}(B)$.
\end{definition}

	\subsection{Invariant measure}\label{ssec:mes-invariante-1}
	
	As is explained in Section \ref{ssec:soft-local-times}, the main requirement to couple the two Markov chains~$Y$ and~$Z$ is for them to have the same invariant measure. In this section, we prove this statement and give some related estimates that we will use further in the paper.
	
	\begin{lemma}\label{lem:mes-inv-Y}
		The Markov chain $Y$ admits an invariant probability $\tilde{\pi}_Y$ on $\partial B \times \partial \Delta$ given by
		\begin{equation}
				\tilde{\pi}_Y(x,y) \defeq \tilde{e}_\Delta^{\phi_N}(x) \Pbf^N_x (X_{H_\Delta} = y) \, .
		\end{equation}
	\end{lemma}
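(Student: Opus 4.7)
The plan is to exploit the reversibility of the tilted RW with reversible measure $\phi_N^2$ (cf.~\eqref{eq:kernel_conductances}) to derive detailed balance identities for two natural ``excursion kernels'' on $\partial B$, from which the invariance of $\tilde{\pi}$ will follow by a short algebraic manipulation.

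First, by the strong Markov property at $R_i$, one has $\Pbf^N(X_{D_i} = y \mid X_{R_i} = x) = \Pbf^N_x(X_{H_\Delta} = y)$. Hence $\tilde{\pi}$ factorises as announced as soon as the marginal chain $(X_{R_i})_{i \geq 1}$ on $\partial B$, with transition kernel $q(z, x) := \Pbf^N_z(X_{R_1} = x)$, has invariant measure $\tilde{e}_\Delta^{\phi_N}$. The whole problem therefore reduces to identifying the invariant measure of $q$.

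To do so, I would decompose $q$ according to whether the walk reaches $\partial \Delta$ before returning to $B$ or not. Introduce on $\partial B \times \partial B$ the sub-probability kernels
\[
K(z, x) := \Pbf^N_z\bigl(H_\Delta < \bar{H}_B,\, X_{R_1} = x\bigr), \qquad L(z, z') := \Pbf^N_z\bigl(\bar{H}_B < H_\Delta,\, X_{\bar{H}_B} = z'\bigr).
\]
The strong Markov property at $\bar{H}_B$ gives $q = K + Lq$, hence $q = (I - L)^{-1} K$. Moreover, a path-level time-reversal argument shows that the event defining $K(z,x)$ (a trajectory $z \to \partial \Delta \to x$ staying in $B^c$ in the interior) is mapped bijectively to the event defining $K(x,z)$, and similarly for $L$; combining this with $\phi_N^2$-reversibility yields the detailed balance identities $\phi_N^2(z) K(z, x) = \phi_N^2(x) K(x, z)$ and $\phi_N^2(z) L(z, z') = \phi_N^2(z') L(z', z)$ on $\partial B$.

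Finally, since any excursion of the tilted RW starting at $z \in \partial B$ eventually either reaches $\partial \Delta$ first or returns to $B$ first, one has $K \mathbf{1} + L \mathbf{1} = \mathbf{1}$, i.e.\ $K \mathbf{1} = (I - L) \mathbf{1}$. Setting $\tilde{\mu}(z) := e_\Delta^{\phi_N}(z) = \phi_N^2(z) (K \mathbf{1})(z)$ and writing $\Phi$ for the diagonal matrix with entries $\phi_N^2|_{\partial B}$, the detailed balance for $L$ gives $\tilde{\mu}^T = \mathbf{1}^T \Phi (I - L)$, whence
\[
\tilde{\mu}^T q = \mathbf{1}^T \Phi (I - L) (I - L)^{-1} K = \mathbf{1}^T \Phi K,
\]
and the detailed balance for $K$ gives $(\mathbf{1}^T \Phi K)_x = \phi_N^2(x) (K\mathbf{1})(x) = \tilde{\mu}(x)$. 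Hence $\tilde{\mu}$ is invariant for $q$, and normalising by $\cpc_\Delta^{\phi_N}(B) = \tilde{\mu}(\partial B)$ produces the claimed invariant probability $\tilde{e}_\Delta^{\phi_N}$. The main technical point will be to justify the detailed balance for $K$ and $L$ carefully from trajectory-level reversibility; once that is in place, everything else is a short matrix manipulation.
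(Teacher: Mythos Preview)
Your proof is correct, but it takes a different route from the paper. The paper works with the two-sided stationary tilted walk under $\Pbf^N_{\phi_N^2}$: it computes $\Pbf^N_{\phi_N^2}(n \in \mathcal{R},\, X_n = x)$ via a last-passage decomposition, then applies $\phi_N^2$-reversibility at the level of a single bridge to identify this quantity as $\phi_N^2(x)\,\Pbf^N_x(\bar H_B > H_\Delta)/\|\phi_N\|_2^2$, and finally invokes the ergodic theorem to read off $\tilde{\pi}(\{x\}\times\partial\Delta)$. You instead reduce to the marginal chain on $\partial B$, decompose its kernel as $q=(I-L)^{-1}K$ according to whether the walk reaches $\partial\Delta$ before $\bar H_B$, and verify invariance of $\tilde e_\Delta^{\phi_N}$ by combining the detailed balance of $K$ and $L$ (both consequences of $\phi_N^2$-reversibility at the path level) with the identity $K\mathbf{1}=(I-L)\mathbf{1}$.

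What each buys: your argument is more self-contained and purely algebraic --- no appeal to a two-sided process or to the ergodic theorem --- and makes the role of reversibility very transparent through the two detailed-balance relations. The paper's approach is slightly less elementary but has a practical payoff: the intermediate identity $\Pbf^N_{\phi_N^2}(n \in \mathcal{R},\, X_n = x) = \phi_N^2(x)\,\Pbf^N_x(\bar H_B > H_\Delta)/\|\phi_N\|_2^2$ (equation~\eqref{eq:proba-retour-temps-site}) is reused later, for instance in Lemma~\ref{lem:esp-nb-retour-walk} and Lemma~\ref{lem:encadrement-sortie-Delta}. One small point worth spelling out in your write-up is the invertibility of $I-L$: since $L\mathbf{1}(z)=\Pbf^N_z(\bar H_B<H_\Delta)<1$ for every $z\in\partial B$, the matrix $L$ is strictly substochastic and $\sum_{k\ge 0}L^k$ converges.
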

	
	\begin{remark}
		Using \eqref{eq:proba-htransform-RW-tuee}, we can see that for $x\in \partial B$,
		\[
		e_\Delta^{\phi_N}(x) = \phi_N(x) \espRW{x}{\phi_N(X_{H_\Delta})  \lambda_N^{-H_\Delta} \indic{H_\Delta < \bar{H}_B}} 
		\]
		and
		\[
		\Pbf^N_x (X_{H_\Delta} = y) = \frac{\phi_N(y)}{\phi_N(x)} \espRW{x}{\lambda_N^{-H_\Delta} \indic{X_{H_\Delta} = y}} \,,
		\]
		so that 
		\[ \tilde{\pi}(x,y) = \frac{1}{\cpc_\Delta^{\phi_N}(B)} \espRW{x}{\phi_N(X_{H_\Delta})  \lambda_N^{-H_\Delta} \indic{H_\Delta < \bar{H}_B}} \phi_N(y) \espRW{x}{\lambda_N^{-H_\Delta} \indic{X_{H_\Delta} = y}} \, . \]
	\end{remark}

	\begin{proof}
		Consider the stationary random walk $(X_k)_{k \in \ZZ}$ under $\Pbf^N_{\phi_N^2}$. With the definition of the return set $\mathcal{R}$, by a last-passage decomposition, we have
		\[ \Pbf^N_{\phi_N^2}(n \in \mathcal{R}, X_n = x) = \Pbf^N_{\phi_N^2}(\exists w \in \partial \Delta, \exists m\geq 1, X_{n-m} = w, X_{[n-m+1,n-1]} \subseteq (B \cup \Delta)^c, X_n = x) \, , \]
		where we have introduced the notation $X_{[a,b]} = \{X_a,X_{a+1}, \ldots, X_b\}$ for $a,b \in \ZZ$.

		Therefore, decomposing on every $m \geq 1$ and $w \in \partial \Delta$, since the events are disjoint we obtain
		\begin{equation}\label{eq:mes-inv-Y:retour-decomp}
			\Pbf^N_{\phi_N^2}(n \in \mathcal{R}, X_n = x) = \sum_{w \in \partial \Delta} \sum_{m \geq 1} \Pbf^N_{\phi_N^2}(X_{n-m} = w, X_{[n-m+1,n-1]} \subseteq (B \cup \Delta)^c, X_n = x) \, .
		\end{equation}
		Since $\phi_N^2$ is the stationary measure of $(X_k)_{k\in \mathbb{Z}}$, we have
		\begin{equation}\label{eq:lem:mes-inv-Y:phiN2-w}
			\Pbf^N_{\phi_N^2}(X_{n-m} = w, X_{[n-m+1,n-1]} \subseteq (B \cup \Delta)^c, X_n = x) = \frac{\phi_N^2(w)}{\| \phi_N\|_2^2} \Pbf^N_{w}(X_{[1,m-1]} \subseteq (B \cup \Delta)^c, X_{m} = x) \, .
		\end{equation}
		Moreover, since $\phi_N^2$ is a reversible measure for the confined RW (recall that random walks on conductances are reversible),
		\begin{equation}\label{eq:lem:mes-inv-Y:reverse}
			\Pbf^N_{w}(X_{[1,m-1]} \subseteq (B \cup \Delta)^c, X_{m} = x) = \frac{\phi_N^2(x)}{\phi_N^2(w)} \Pbf^N_{x}(X_{[1,m-1]} \subseteq (B \cup \Delta)^c, X_{m} = w) \, .
		\end{equation}
		Therefore, injecting \eqref{eq:lem:mes-inv-Y:reverse} in \eqref{eq:lem:mes-inv-Y:phiN2-w} and then in \eqref{eq:mes-inv-Y:retour-decomp}, we get
		\begin{equation}\label{eq:proba-retour-temps-site}
			\begin{split}
				\Pbf^N_{\phi_N^2}(n \in \mathcal{R}, X_n = x) &= \frac{\phi_N^2(x)}{\| \phi_N\|_2^2} \sum_{w \in \partial \Delta} \sum_{m \geq 1} \Pbf^N_{x}(X_{[1,m-1]} \subseteq (B \cup \Delta)^c, X_{m} = w)\\
				&= \frac{\phi_N^2(x)}{\| \phi_N\|_2^2} \Pbf^N_x(\bar{H}_B > H_\Delta) \, ,
			\end{split}
		\end{equation}
which does not depend on $n$ and is proportional to~$e_B^{\phi_N}(x)$.
		Using the ergodic theorem, $\tilde{\pi}$ satisfies
		\begin{equation}\label{eq:pi-th-ergodique}
			\tilde{\pi}_Y(\mathset{x} \times \partial \Delta) = \lim_{k \to +\infty} \frac{\sum_{i = 1}^k \indic{i \in \mathcal{R}, X_i = x}}{\sum_{i = 1}^k \indic{i \in \mathcal{R}}} = \frac{\Pbf^N_{\phi_N^2} (n \in \mathcal{R}, X_n = x)}{\Pbf^N_{\phi_N^2} (n \in \mathcal{R})} \, ,
		\end{equation}
		which proves that $\tilde{\pi}_Y(\mathset{x} \times \partial \Delta) = \tilde{e}_B^{\phi_N}(x)$. Then, using the Markov property, we see that $\tilde \pi(x,y) = \tilde{\pi}_Y(\mathset{x} \times \partial \Delta) \Pbf^N_x (X_{H_\Delta} = y)$, which concludes the proof.
	\end{proof}

	\begin{lemma}\label{lem:mes-inv-Z}
		The Markov chain $Z$ admits the invariant measure
		\begin{equation}
			\tilde{\pi}_Z(x,y) = \Pbf^{\Psi_N}_x \big( H_\Delta < \bar{H}_B < +\infty) \Pbf^N_x \big( X_{H_\Delta} = y \big) \mathbbm{1}_{x \in \partial B} \, .
		\end{equation}
		Moreover, there is a constant $c > 0$ such that for all $N$ large enough,
		\begin{equation}\label{eq:erreur-mes-invariante}
			\sup_{(x,y) \in \Sigma} \Big| \frac{\tilde{\pi}_Y(x,y)}{\tilde{\pi}_Z(x,y)} - 1 \Big| \leq c N^{\gamma - 1} \, .
		\end{equation}
	\end{lemma}
	
	\begin{proof}
		Recall that $Z$ is constructed thanks to a collection of i.i.d.\ random walks on conductances, whose trajectories are denoted $(w^j)_{j\ge 1}$. Therefore, choosing $x  \in \partial B$, $\pi_Z(\mathset{x} \times \partial \Delta)$ can be expressed by the average number of visits to $x$ between two consecutive visits to a reference site. Here, the reference site is a state $+\infty$ added to $B$, which corresponds to count the average number of visits at $x \in \partial B$ by a single trajectory $w^j$ of the tilted RI.
		We end up with
		\begin{equation}\label{eq:mes-inv-entrelac}
			\tilde{\pi}_Z( \mathset{x} \times \partial \Delta) = \frac{1}{\mathbf{E}_{\bar{e}^{\Psi_N}_B}^{\Psi_N} \left[ T^{1} \right]} \mathbf{E}_{\bar{e}^{\Psi_N}_B}^{\Psi_N}  \left[ \sum_{i = 1}^{T^{1}} \indic{X_{{R_i}} = x} \right] \, ,
		\end{equation}
		where we recall that $T^{1}$ is the number of excursions of the first trajectory.

Then, as for the conclusion of the proof of Lemma~\ref{lem:mes-inv-Y}, by the Markov property we get that 
\[
\tilde{\pi}_Z(x,y) = \tilde{\pi}_Z( \mathset{x} \times \partial \Delta) \mathbf{P}_{x}^{\Psi_N} \left( X_{H_\Delta} = y \right) = \tilde{\pi}_Z( \mathset{x} \times \partial \Delta)  \mathbf{P}_{x}^{N} \left( X_{H_\Delta} = y \right)\,,
\] 
since $\Psi_N =\phi_N$ on $\Delta^c$.

		Thus, we only need to prove that  \eqref{eq:mes-inv-entrelac}, or more precisely the second term on its right-hand side, is proportional to $e_\Delta^{\phi_N}(x)$ (normalizing constants are unimportant).
		We first rewrite it using the definition of $R_i$: with a similar last-passage decomposition as in \eqref{eq:mes-inv-Y:retour-decomp}, the last expectation in \eqref{eq:mes-inv-entrelac} is equal to
		\begin{equation}
			\label{eq:mes-inv-entrelac2}
\begin{split}		
 \sum_{y \in B} &  \bar{e}^{\Psi_N}_B(y) \sum_{k \geq 0}  \sum_{w \in \partial \Delta} \mathbf{P}_{y}^{\Psi_N} \Big( X_k = x, \exists j \geq  1, X_{k-j} = w, X_{[k-j+1,k-1]} \subseteq (B \cup \Delta)^c \Big) \\
 & = \sum_{y \in B} \bar{e}^{\Psi_N}_B(y) \sum_{k \geq 0} \sum_{j=1}^{k} \sum_{w \in \partial \Delta} \mathbf{P}_{y}^{\Psi_N}\left( X_{k-j} = w \right) \Pbf^{\Psi_N}_{w} \left( X_{j} = x, X_{[1,j-1]} \subseteq (B \cup \Delta)^c \right) \,,
		\end{split}
		\end{equation}
where the last line follows by the Markov property.
Reversing both paths, similarly to~\eqref{eq:lem:mes-inv-Y:reverse} we have
		\[
		\mathbf{P}_{y}^{\Psi_N}\left( X_{k-j} = w \right) = \frac{\Psi_N(w)^2}{\Psi_N(y)^2} \mathbf{P}_{w}^{\Psi_N}\left( X_{k-j} = y \right) 
		\]
		and
		\[
		\Pbf^{\Psi_N}_{w} \left( X_{j} = x, X_{[1,j-1]} \subseteq (B \cup \Delta)^c \right)
		= \frac{\Psi_N(x)^2}{\Psi_N(w)^2} \Pbf^{\Psi_N}_{x} \left( X_{j} = w, X_{[1,j-1]} \subseteq (B \cup \Delta)^c \right) \,.
		\]
		Hence, using also that $\Psi_N = \phi_N$ on $\Delta^c \subset B^{\eps}$ (and Fubini's theorem), \eqref{eq:mes-inv-entrelac2} is proportional to
		\[
		\begin{split}
			\sum_{j \geq 1}  \sum_{w \in \partial \Delta} \phi_N^2(x)  \Pbf^N_{x} \big( X_j = w, X_{[1,j-1]} \subseteq (B \cup \Delta)^c \big)  \sum_{y\in B} \sum_{k' \geq 1} e^{\Psi_N}_B(y) \frac{1}{\phi_N^2(y)} \mathbf{P}_{w}^{\Psi_N} \left( X_{k'} = y \right)  &  \\
			 = \phi_N^2(x)  \sum_{w \in \partial \Delta} \mathbf{P}^N_{x} \big( H_\Delta < \bar{H}_B, X_{H_{\Delta}} = w \big)  \sum_{y\in B}  \frac{1}{\phi_N^2(y)} e^{\Psi_N}_B(y) G^{\Psi_N}(w,y) & \, .
			\end{split}
		\]
		
		Using the last exit decomposition for the tilted walk \eqref{eq:LED-tilted}, we get that
		\begin{equation}\label{eq:esperance-pi-Z-terme-final}
			\begin{split}
			 	\mathbf{E}_{\bar{e}^{\Psi_N}_B}^{\Psi_N} \left[ \sum_{i = 1}^{T^1} \indic{X_{{R_i}} = x} \right] &= \lambda_N \phi_N^2(x) \sum_{w \in \partial \Delta} \mathbf{P}^N_{x} \big( H_\Delta < \bar{H}_B, X_{H_{\Delta}} = w \big) \Pbf_w \big( H_B < +\infty \big) \\
			 	&= \lambda_N \phi_N^2(x) \mathbf{P}^{\Psi_N}_{x} \left( H_\Delta < \bar{H}_B < +\infty \right) \, ,
			\end{split}
		\end{equation}
where the last inequality stems from the strong Markov property.

To get the second part of the lemma, we rewrite \eqref{eq:esperance-pi-Z-terme-final} as
\begin{equation}
	\tilde{\pi}_Z(x,y) = \mathbf{P}^{N}_{x} \left( H_\Delta < \bar{H}_B \right) - \sum_{w \in \partial \Delta} \mathbf{P}^N_{x} \big( H_\Delta < \bar{H}_B, X_{H_{\Delta}} = w \big) \Pbf_w \big( H_B = +\infty \big)
\end{equation}
Observe that a use of Lemma \ref{lem:toucher-B-avant-Beps-start-Delta} yields that $\Pbf_w \big( H_B = +\infty \big) \asymp N^{\gamma - 1}$ uniformly in $x \in \partial B, y \in \partial \Delta$, in particular
\begin{equation}
	\frac{\tilde{\pi}_Z(x,y)}{\tilde{\pi}_Y(x,y)} - 1 \asymp \frac{1}{\mathbf{P}^{N}_{x} \big( H_\Delta < \bar{H}_B \big)} \sum_{w \in \partial \Delta} \mathbf{P}^N_{x} \big( H_\Delta < \bar{H}_B, X_{H_{\Delta}} = w \big) N^{\gamma - 1} = N^{\gamma - 1} \, ,
\end{equation}
hence proving the second part of the lemma.
	\end{proof}
	
	In the rest of the paper, we will simply write $\tilde{\pi} = \tilde{\pi}_Y$, since according to \eqref{eq:erreur-mes-invariante} $\tilde{\pi}_Z$ is close to $\tilde{\pi}_Y$.
	
	To end this section, we provide estimates on $\tilde{\pi}$ that will be useful later. Namely, we will need the probability under $\tilde{\pi}$ that $y \in \partial \Delta$ is an exit site, as well as estimates on $\tilde{e}_\Delta^{\phi_N}$.

	\begin{lemma}\label{lem:encadrement-sortie-Delta}
	There are universal constants $c,C>0$ such that, uniformly in $y \in \partial \Delta$, we have
		\begin{equation}
		c N^{1-d} \leq \tilde{\pi}(\partial B \times \mathset{y}) \leq C N^{1-d} \, .
		\end{equation}
	\end{lemma}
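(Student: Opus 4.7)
The plan is to obtain a clean closed-form expression for $\tilde{\pi}(\partial B \times \{y\})$ via a time-reversal argument that parallels the proof of Lemma~\ref{lem:mes-inv-Y}, but with the roles of the return times $\mathcal{R}$ and exit times $\mathcal{D}$ swapped (equivalently, the roles of $B$ and $\Delta$). More precisely, I would decompose $\Pbf^N_{\phi_N^2}(n \in \mathcal{D}, X_n = y)$ on the last visit to $\Delta$ strictly before time $n$ (which is forced to lie in $\partial \Delta$, since the following step must exit $\Delta$), and then apply the reversibility of the tilted RW with respect to $\phi_N^2$ exactly as in~\eqref{eq:lem:mes-inv-Y:reverse}. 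Summing over this decomposition reconstructs the probability, starting from $y$, of visiting $B$ before returning to $\Delta$, yielding
\[
\Pbf^N_{\phi_N^2}(n \in \mathcal{D}, X_n = y) = \frac{\phi_N^2(y)}{\|\phi_N\|_2^2}\, \Pbf^N_y(\bar{H}_\Delta > H_B).
\]
Applying the ergodic theorem as in~\eqref{eq:pi-th-ergodique}, and noting that $\Pbf^N_{\phi_N^2}(n \in \mathcal{R}) = \Pbf^N_{\phi_N^2}(n \in \mathcal{D})$ by stationarity (each excursion contributes exactly one of each), this gives
\[
\tilde{\pi}(\partial B \times \{y\}) = \frac{\phi_N^2(y)\, \Pbf^N_y(\bar{H}_\Delta > H_B)}{\sum_{y' \in \partial \Delta} \phi_N^2(y')\, \Pbf^N_{y'}(\bar{H}_\Delta > H_B)}.
\]

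Given this formula, the lemma reduces to estimating the two factors $\phi_N^2(y)$ and $\Pbf^N_y(\bar{H}_\Delta > H_B)$ uniformly in $y \in \partial \Delta$. The first is $\asymp 1$ by Proposition~\ref{prop:encadrement-ratio}, since $\gamma < 1$ gives $\partial \Delta \subset B^\eps$ for $N$ large enough. For the second, I would prove $\Pbf^N_y(\bar{H}_\Delta > H_B) \asymp N^{-\gamma}$ uniformly by mimicking the argument of Lemma~\ref{lem:arg-mart-tilt-B-Beps} with the outer boundary $\partial B^\eps$ (at distance $\eps N$ from $B$) replaced by $\partial \Delta$ (at distance $N^\gamma$): decompose on the first step (which lies in $\Delta^c$ with probability bounded away from $0$, thanks to the definition of $\partial \Delta$), then apply the straightforward adaptation of the gambler's ruin Lemma~\ref{lem:gambler-ruin} to get that for SRW, starting from $w \sim y$ with $w \in \Delta^c$, the probability to reach $\partial B$ before $\partial \Delta$ is $\asymp N^{-\gamma}$, and finally transfer to the tilted RW via~\eqref{eq:encadrement-proba} with $C = (\Delta^c \cap D_N) \setminus B \subseteq B^\eps$. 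The time-control sum in~\eqref{eq:encadrement-proba} is tamed by Proposition~\ref{prop:rester-ds-boule-anneau} with $a_N - b_N \asymp N^\gamma$, producing decay $e^{-c k N^{2-2\gamma}}$ in $k$ that dominates the growth $e^{c_0 k}$ as soon as $N$ is large enough (here $\gamma < 1$ is crucial).

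Combining these two uniform estimates, the numerator above is $\asymp N^{-\gamma}$, while the denominator is a sum of $|\partial \Delta| \asymp N^{d-1}$ terms of the same order, hence $\asymp N^{d-1-\gamma}$; the ratio is therefore $\asymp N^{1-d}$, as required. The main obstacle I anticipate is the uniform \emph{lower} bound $\Pbf^N_y(\bar{H}_\Delta > H_B) \gtrsim N^{-\gamma}$: the matching gambler's ruin lower bound must hold for every $y \in \partial \Delta$, which uses both the fact that each such $y$ has at least one neighbour in $\Delta^c$ (by our definition of $\partial \Delta$) and the fact that at the separation scale $N^\gamma \ll N$ the curvatures of $\partial B$ and $\partial \Delta$ are negligible, so that the martingale argument of Proposition~1.5.10 of~\cite{lawler2013intersections} (used in Lemma~\ref{lem:gambler-ruin}) goes through with constants depending only on $d, \alpha, \gamma, \eps$.
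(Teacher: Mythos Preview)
Your proposal is correct and follows essentially the same approach as the paper: obtain the closed-form expression $\tilde{\pi}(\partial B \times \{y\}) = \phi_N^2(y)\,\Pbf^N_y(H_B < \bar{H}_\Delta) / \sum_{w \in \partial \Delta} \phi_N^2(w)\,\Pbf^N_w(H_B < \bar{H}_\Delta)$ via time-reversal and the ergodic theorem, then estimate $\Pbf^N_y(H_B < \bar{H}_\Delta) \asymp N^{-\gamma}$ uniformly using~\eqref{eq:encadrement-proba}, Lemma~\ref{lem:gambler-ruin}, and Proposition~\ref{prop:rester-ds-boule-anneau}. The paper does not separately isolate the first step out of $\partial \Delta$ as you do, but applies the gambler's ruin estimate directly; your concern about the uniform lower bound is handled by the same argument.
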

	
	\begin{proof}
		With the same method as in the proof of Lemma \ref{lem:mes-inv-Y} (see e.g.~\eqref{eq:proba-retour-temps-site}), we get, for $y\in \partial \Delta$
		\[
		\begin{split}
			\Pbf^N_{\phi_N^2}&(X_n = y , n \in \mathcal{D})  = \sum_{x \in \partial B} \sum_{m \geq 1} \frac{\phi_N^2(x)}{\| \phi_N\|_2^2} \Pbf^N_{x}(X_{[1,m-1]} \subseteq (B \cup \Delta)^c, X_{m} = y) \\
			& \qquad \qquad = \sum_{x \in \partial B} \sum_{m \geq 1} \frac{\phi_N^2(y)}{\| \phi_N\|_2^2} \Pbf^N_{y}(X_{[1,m-1]} \subseteq (B \cup \Delta)^c, X_{m} = x)  = \frac{\phi_N^2(y)}{\| \phi_N\|_2^2} \Pbf^N_{y}( H_B < \bar{H}_\Delta)\,,
		\end{split}
		\]
	where we have reversed time in the second identity, similarly as in~\eqref{eq:lem:mes-inv-Y:reverse}.
		Now, thanks to the ergodic theorem, we obtain, similarly as in \eqref{eq:pi-th-ergodique},
		\begin{equation*}
			\tilde{\pi}(\partial B \times \mathset{y}) = \frac{\Pbf^N_{\phi_N^2}(X_n = y , n \in \mathcal{D})}{\Pbf^N_{\phi_N^2}(n \in \mathcal{D})} = \frac{\phi_N^2(y) \Pbf^N_y (H_B < \bar{H}_\Delta)}{\sum_{w \in \partial \Delta} \phi_N^2(w) \Pbf^N_w (H_B < \bar{H}_\Delta)} \, .
		\end{equation*}

We now estimate $\Pbf^N_y (H_B < \bar{H}_\Delta)$ uniformly in $y\in \partial \Delta$, with an easy adaptation of the proof of Lemma \ref{lem:arg-mart-tilt-B-Beps} and thanks to the small width of $(B \cup \Delta)^c$.
		Applying \eqref{eq:encadrement-proba} with the set $C=\Delta^c \setminus B$ and the event $A = \big\{H_B < \bar{H}_\Delta \big\}$, we have
		\begin{equation}\label{eq:decomp-toucher-B-avant-Delta}
		\begin{split}
			\kappa_1 \Pbf_y (H_B < \bar{H}_\Delta) & \leq \Pbf^N_y (H_B < \bar{H}_\Delta) \\
			& \qquad  \leq \frac{1}{\kappa_1} \left[ e^{c_0} \Pbf_y(H_B < \bar{H}_\Delta) + \sum_{k = 1}^{+\infty} e^{c_0 (k+1)} \Pbf_y(k N^2 \leq H_B < \bar{H}_\Delta) \right] \,.
			\end{split}
		\end{equation}
		With Lemma \ref{lem:gambler-ruin}, see~\eqref{eq:proba-atteindre-avant-SRW}), we have that $\Pbf_y (H_\Delta < \bar{H}_B) \asymp N^{-\gamma}$, uniformly in $y\in \partial \Delta$. 
		On the other hand, with Proposition~\ref{prop:rester-ds-boule-anneau}, we have $\Pbf_y(k N^2 \leq H_B < \bar{H}_\Delta) \leq e^{-c_a kN^{2(1-\gamma)}}$. Thus, the last sum in \eqref{eq:decomp-toucher-B-avant-Delta} is a $\bar{o}(N^{-\gamma})$ and we have proved that $\Pbf^N_y (H_B < \bar{H}_\Delta) \asymp N^{-\gamma}$, uniformly in $y\in \partial \Delta$. With Proposition~\ref{prop:encadrement-ratio}, this proves that
		\begin{equation*}
			\tilde{\pi}(\partial B \times \mathset{y}) = \frac{\phi_N^2(y) \Pbf^N_y (H_B < \bar{H}_\Delta)}{\sum_{w \in \partial \Delta} \phi_N^2(w) \Pbf^N_w (H_B < \bar{H}_\Delta)} \asymp \frac{\Pbf^N_y (H_B < \bar{H}_\Delta)}{\sum_{w \in \partial \Delta} \Pbf^N_w (H_B < \bar{H}_\Delta)} \asymp \frac{N^{-\gamma}}{N^{d-1-\gamma}} = N^{1-d} \, , 
		\end{equation*}
	again uniformly in $y\in \partial \Delta$.
	\end{proof}

	\begin{lemma}\label{lem:encadrement-mes-harmonique}
		There are universal constants $c, C > 0$ such that for all $x \in \partial B$,
		\begin{equation}
			c N^{1-d} \leq \tilde{e}_\Delta^{\phi_N}(x) \leq C N^{1-d} \quad , \quad c N^{d-1-\gamma} \leq \cpc_\Delta^{\phi_N}(B) \leq C N^{d-1-\gamma} \, .
		\end{equation}
	\end{lemma}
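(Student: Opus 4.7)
The plan is to reduce the statement to estimating $\Pbf^N_x(H_\Delta<\bar H_B)$ uniformly in $x\in\partial B$, following the same strategy as in the proof of Lemma~\ref{lem:encadrement-sortie-Delta} with the roles of $B$ and $\Delta$ swapped.

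First, by Proposition~\ref{prop:encadrement-ratio}, since $\partial B\subset B^\eps$ we have $\phi_N(x)\asymp \phi_N(x_0)=1$ uniformly in $x\in\partial B$, so that
\[
e_\Delta^{\phi_N}(x)\asymp \Pbf^N_x(H_\Delta<\bar H_B)\qquad \text{uniformly in } x\in\partial B.
\]
Since $\cpc_\Delta^{\phi_N}(B)=\sum_{x\in\partial B} e_\Delta^{\phi_N}(x)$ and $|\partial B|\asymp N^{d-1}$ ($B$ being a Euclidean ball of radius $\alpha N$), both bounds of the lemma will follow at once from the claim
\[
\Pbf^N_x(H_\Delta<\bar H_B)\asymp N^{-\gamma}\qquad \text{uniformly in } x\in\partial B.
\]

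To establish this claim, I would apply~\eqref{eq:encadrement-proba} with $C=\Delta^c\setminus B$ (the thin annulus of width $N^\gamma$ between $B$ and $\Delta$, which is contained in $B^\eps$ for $N$ large since $\gamma<1$) and $A=\{H_\Delta<\bar H_B\}$, obtaining
\[
\kappa\,\Pbf_x(H_\Delta<\bar H_B)\leq \Pbf^N_x(H_\Delta<\bar H_B)\leq \tfrac{1}{\kappa}\Big[e^{c_0}\,\Pbf_x(H_\Delta<\bar H_B)+\sum_{k\geq 1}e^{c_0(k+1)}\,\Pbf_x(kN^2\leq H_\Delta<\bar H_B)\Big].
\]
The main SRW input is $\Pbf_x(H_\Delta<\bar H_B)\asymp N^{-\gamma}$, which is a gambler's ruin estimate in the annulus of thickness $N^\gamma$: it follows from exactly the same Taylor expansion of the harmonic function $y\mapsto |y-x_0|^{2-d}$ as in the proof of Lemma~\ref{lem:gambler-ruin} (via \cite[Prop.~1.5.10]{lawler2013intersections}), the only modification being that the outer radius $(\alpha+\eps)N$ is replaced by $\alpha N+N^\gamma$. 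The error series is absorbed by Proposition~\ref{prop:rester-ds-boule-anneau} applied to this same annulus: a one-step Markov argument gives $\Pbf_x(kN^2\leq H_\Delta<\bar H_B)\leq C\,e^{-c_a k N^{2(1-\gamma)}}$, so since $\gamma<1$ makes $N^{2(1-\gamma)}\to+\infty$, the series is super-exponentially small in $N$ and is in particular a $\bar{o}(N^{-\gamma})$. Consequently the upper bound on $\Pbf^N_x(H_\Delta<\bar H_B)$ matches the lower bound at order $N^{-\gamma}$.

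Putting everything together, $\Pbf^N_x(H_\Delta<\bar H_B)\asymp N^{-\gamma}$ uniformly in $x\in\partial B$, whence $e_\Delta^{\phi_N}(x)\asymp N^{-\gamma}$, $\cpc_\Delta^{\phi_N}(B)\asymp N^{d-1-\gamma}$ and $\tilde e_\Delta^{\phi_N}(x)\asymp N^{1-d}$, which are the desired bounds. The only non-routine step is the adaptation of Lemma~\ref{lem:gambler-ruin} to the mesoscopic outer radius $\alpha N+N^\gamma$ instead of the macroscopic $(\alpha+\eps)N$, but this amounts to a verbatim rerun of the same martingale/Taylor computation with $\eps N$ replaced by $N^\gamma$; all remaining steps are direct transcriptions of the scheme already carried out in the proof of Lemma~\ref{lem:encadrement-sortie-Delta}.
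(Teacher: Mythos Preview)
Your proposal is correct and follows essentially the same approach as the paper: both reduce to showing $\Pbf^N_x(H_\Delta<\bar H_B)\asymp N^{-\gamma}$ via~\eqref{eq:encadrement-proba} on the thin annulus $\Delta^c\setminus B$, invoke the gambler's ruin estimate for the SRW term, kill the error series with Proposition~\ref{prop:rester-ds-boule-anneau}, and then combine with Proposition~\ref{prop:encadrement-ratio} and $|\partial B|\asymp N^{d-1}$ to conclude. The paper's proof is slightly terser in that it simply refers back to the argument of Lemma~\ref{lem:encadrement-sortie-Delta} rather than spelling out the adaptation of Lemma~\ref{lem:gambler-ruin} to the $N^\gamma$-annulus, but the content is the same.
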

	
	\begin{proof}
		Again, applying \eqref{eq:encadrement-proba} with the set $C=\Delta^c \setminus B$ and the event $A = \big\{ \bar{H}_\Delta <H_B \big\}$, we have for $x \in \partial B$
		\begin{equation}\label{eq:decomp-toucher-Delta-avant-B}
		\begin{split}
			\kappa_1 \Pbf_x (H_\Delta < \bar{H}_B) &\leq \Pbf^N_x (H_\Delta < \bar{H}_B)\\
			&\qquad  \leq \frac{1}{\kappa_1} \left[ e^{c_0} \Pbf_x(H_\Delta < \bar{H}_B) + \sum_{k = 1}^{+\infty} e^{c_0 (k+1)} \Pbf_x(k N^2 \leq H_\Delta < \bar{H}_B) \right] \,.
			\end{split}
		\end{equation}
		Then, for the same reason as in the proof of Lemma \ref{lem:encadrement-sortie-Delta}, we have $\Pbf_x (H_\Delta < \bar{H}_B) \asymp N^{-\gamma}$ and the last sum in \eqref{eq:decomp-toucher-Delta-avant-B} is a $\bar{o}(N^{-\gamma})$, thus showing that $\Pbf^N_x (H_\Delta < \bar{H}_B) \asymp N^{-\gamma}$ uniformly in $x\in \partial B$.
		Combined with Proposition \ref{prop:encadrement-ratio}, this proves that
		\[ \tilde{e}_\Delta^{\phi_N}(x) = \frac{\phi_N^2(x) \Pbf^N_x (H_\Delta < \bar{H}_B)}{\sum_{z \in \partial B} \phi_N^2(z) \Pbf^N_z (H_\Delta < \bar{H}_B)} \asymp \frac{N^{-\gamma}}{ |\partial B| N^{-\gamma}} \asymp N^{1-d} \, . \]
		On the other hand,
		\begin{equation}\label{eq:cap-phi-somme-phi-surface}
			\cpc_\Delta^{\phi_N}(B) = \sum_{x \in \partial B} \phi_N^2(x) \Pbf^N_x (H_\Delta < \bar{H}_B) \asymp N^{-\gamma} \sum_{x \in \partial B} \phi_N^2(x) \,,
		\end{equation}
		and with \eqref{eq:phiN-borne} we have
		\begin{equation}\label{eq:asymp-somme-phi-surface}
			\sum_{x \in \partial B} \phi_N^2(x) \asymp |\partial B| \asymp N^{d-1} \, .
		\end{equation}
		Combining \eqref{eq:cap-phi-somme-phi-surface} and \eqref{eq:asymp-somme-phi-surface}, this concludes the lemma.	
	\end{proof}

	\subsection{Upper bound on mixing times}\label{ssec:tps-melange-1}
	
		In this section, we prove bounds on the mixing times $T^Y_{\mix}$ and $T^Z_{\mix}$ that we will use in the upper bound of Theorem \ref{th:soft-local-times}. To bound the mixing times of a Markov chain $X$ on $\Sigma$, we exhibit a coupling $\QQ_{x,\tilde{x}}$ of two copies $X$ and $\tilde{X}$ starting at $x$ and $\tilde{x}$ respectively, such that $X_i = \tilde X_i$ implies $X_j = \tilde X_j$ for all $j\ge i$. Then, we evaluate their coalescing time under $\QQ_{x,\tilde{x}}$, which we use to get an upper bound on the mixing time $T_{\mix}$ with
		\begin{equation}\label{eq:borne-tps-mixage}
				T_{\mix} \leq \inf \bigg\{ k \in \NN \, : \, \sup_{x,\tilde{x} \in \Sigma} \QQ_{x,\tilde{x}}(X_k \neq \tilde{X}_k) \leq \tfrac14 \bigg \} \, .
		\end{equation}
	We refer to \cite[Corollary 5.3]{levin2017markov} for the proof of \eqref{eq:borne-tps-mixage} and more details on coalescing couplings.
	In the two following sections, we establish such coupling for the Markov chains $Y$ and $Z$, and prove the following result.
	
	\begin{proposition}\label{prop:temps-mixage}
		There is a constant $c > 0$ such that for all $N$ large enough,
		\begin{equation}
			T^Y_{\mix} \, \vee \, T^Z_{\mix} \leq c N^{(1-\gamma)} \, .
		\end{equation}
	\end{proposition}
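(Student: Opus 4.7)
The plan is to establish a Doeblin-type minorization: exhibit a probability measure $\nu$ on $\Sigma = \partial B \times \partial \Delta$ and a constant $c>0$ such that, for both $\circ \in \{Y,Z\}$ and uniformly in the starting state $(z,w) \in \Sigma$,
\[
p^\circ\bigl((z,w),\,\cdot\,\bigr) \;\ge\; c\, N^{\gamma-1}\, \nu(\cdot)\,.
\]
Granted this, the standard Doeblin coupling (at each step, couple both chains to a common draw from $\nu$ with probability $\ge c N^{\gamma-1}$, otherwise let them evolve independently) produces a coalescing coupling $\QQ_{\cdot,\cdot}$ with $\QQ_{\cdot,\cdot}(\text{chains differ at step } k) \le (1-cN^{\gamma-1})^{k}$. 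Via \eqref{eq:borne-tps-mixage} this immediately yields $T^\circ_{\mix} \le C\, N^{1-\gamma}$.

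For $Y$, since $p^Y((z,w),(x,y)) = \Pbf^N_w(X_{H_B}=x)\, \Pbf^N_x(X_{H_\Delta}=y)$ depends on the starting state only through $w\in\partial\Delta$, it suffices to minorize the first factor uniformly. I insert the intermediate macroscopic set $\partial B^\eps$ and apply the strong Markov property at $H_{\partial B^\eps}$:
\[
\Pbf^N_w(X_{H_B}=x) \;\ge\; \Pbf^N_w(H_{\partial B^\eps} < H_B)\;\; \inf_{v \in \partial B^\eps}\Pbf^N_v(X_{H_B}=x)\,.
\]
The first factor is $\ge c\, N^{\gamma-1}$ uniformly in $w \in \partial \Delta$ by Lemma~\ref{lem:toucher-B-avant-Beps-start-Delta}, since $d(w,B)=N^\gamma$. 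For the second factor, I would restrict to paths staying in $B^\eps$ until $H_B$, apply \eqref{eq:proba-htransform-RW-tuee} with $C = B^\eps \setminus B$ together with the bounded ratios of Proposition~\ref{prop:encadrement-ratio} (which controls both the prefactor $\phi_N(x)/\phi_N(v)$ and the fact that $\lambda_N^{-H_B}\ge 1$ uniformly contributes only a constant on a time scale $\lesssim N^2$ after handling the tail as in Lemma~\ref{lem:arg-mart-tilt-B-Beps}), and reduce to the SRW Poisson-kernel estimate
\[
\inf_{v\in\partial B^\eps} \Pbf_v\bigl(X_{H_B}=x,\ H_B < \tau_{B^\eps}\bigr) \;\ge\; c\, N^{1-d}\,,
\]
a standard discrete Harnack consequence for hitting a ball at macroscopic distance. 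Taking $\nu(x,y) = |\partial B|^{-1}\Pbf^N_x(X_{H_\Delta}=y)$ (a probability measure since $|\partial B|\asymp N^{d-1}$) gives the desired minorization.

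For $Z$, I simply drop the first summand in \eqref{eq:noyau-Z} and use
\[
p^Z\bigl((z,w),(x,y)\bigr) \;\ge\; \Pbf^{\Psi_N}_w(H_B=+\infty)\; \bar e^{\Psi_N}_B(x)\; \Pbf^N_x(X_{H_\Delta}=y)\,.
\]
For $w\in\partial\Delta$ with $d(w,B)=N^\gamma$, Lemma~\ref{lem:toucher-B-avant-Beps-start-Delta} gives $\Pbf^{\Psi_N}_w(H_B=+\infty)\ge c N^{\gamma-1}$, and Lemma~\ref{lem:encadrement-mes-harm-tilt} gives $\bar e^{\Psi_N}_B(x)\ge c\, N^{1-d}$. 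The same $\nu$ as above works, providing the minorization with the same rate $\asymp N^{\gamma-1}$.

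The main obstacle is the uniform lower bound on the SRW hitting distribution of $\partial B$ from $\partial B^\eps$ needed in the $Y$ step: one must rule out that the walk slips out of $B^\eps$ before reaching $B$, then properly translate the SRW estimate into a tilted-walk estimate via \eqref{eq:proba-htransform-RW-tuee}-\eqref{eq:encadrement-proba}. This is technical but parallel to the tail-decomposition argument already carried out in the proof of Lemma~\ref{lem:arg-mart-tilt-B-Beps} (using Proposition~\ref{prop:rester-ds-boule-anneau} to control $\{H_B \ge k N^2\}$-type events together with the choice $c_a > \eps^2 c_0$).
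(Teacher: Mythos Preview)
Your Doeblin-minorization strategy is sound and, for $Z$, coincides with the paper's argument (the paper also drops to the escape term $\Pbf^{\Psi_N}_w(H_B=+\infty)\,\bar e^{\Psi_N}_B(\cdot)$). For $Y$ your approach is a natural dual of the paper's: the paper inserts the \emph{inner} ball $G=B(x_0,\tfrac{\alpha}{2}N)$ and couples on the \emph{exit} point $X_{D_i}$ (Harnack is applied to the harmonic function $x\mapsto \Pbf_x(X_{H_\Delta}=y)$ on a ball containing $G$), whereas you insert an \emph{outer} sphere and couple on the \emph{entrance} point $X_{R_{i+1}}$. Both routes work and yield the same rate.

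There is, however, a concrete error in your $Y$ step. The asserted SRW bound
\[
\inf_{v\in\partial B^\eps}\Pbf_v\bigl(X_{H_B}=x,\ H_B<\tau_{B^\eps}\bigr)\ \ge\ c\,N^{1-d}
\]
is false: summing over $x\in\partial B$ gives $\Pbf_v(H_B<\tau_{B^\eps})$, and for $v\in\partial B^\eps$ (distance $O(1)$ to the outside, distance $\eps N$ to $\partial B$) gambler's ruin yields $\Pbf_v(H_B<\tau_{B^\eps})\asymp N^{-1}$, not $\asymp 1$. Your minorization then degrades to $p^Y\ge cN^{\gamma-2}\nu$, giving only $T^Y_{\mix}\le cN^{2-\gamma}$, which is too weak for the later Azuma argument.

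The fix is immediate: replace the intermediate set $\partial B^\eps$ by $\partial B^{\delta}$ for any fixed $\delta\in(0,\eps)$. Lemma~\ref{lem:toucher-B-avant-Beps-start-Delta} (with $\eps$ replaced by $\delta$) still gives $\Pbf^N_w(H_{\partial B^\delta}<H_B)\ge cN^{\gamma-1}$ for $w\in\partial\Delta$. From $v\in\partial B^\delta$, now at macroscopic distance from both boundaries of the annulus $B^\eps\setminus B$, one has $\Pbf_v(H_B<\tau_{B^\eps})\asymp 1$, and Harnack chained along $\partial B^\delta$ gives the uniform lower bound $\Pbf_v(X_{H_B}=x,\,H_B<\tau_{B^\eps})\ge cN^{1-d}$; the transfer to $\Pbf^N$ via \eqref{eq:encadrement-proba} then goes through as you describe. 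The paper's choice of an inner intermediate set $G$ sidesteps this boundary issue entirely, which is the practical advantage of its version.
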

	
	\subsubsection{Upper bound on the mixing time of the confined random walk}
	
	Consider the ball that is concentric with $B$ with half radius:
	 \[ G \defeq B(x_0^N, \tfrac{\alpha}{2} N) = \Big\{ z \in B \, : \, d(z,\partial B) \geq \tfrac12 \alpha N \Big\} \, . \]
	Consider two copies $Y$ and $\tilde{Y}$ with the law of the Markov chain with transitions $p^Y$, and write~$X$ and $\tilde{X}$ for their underlying confined RW. We couple $Y$ and $\tilde{Y}$ by making $X$ and $\tilde{X}$ go to $G$ during the same $i$-th excursion (between times $R_i$ and $D_i$). Then, trajectories will have a positive probability of being coupled before reaching again $\Delta$, \textit{i.e.}\ before time $D_i$.

\smallskip
\noindent
{\it Step 1. Probability of reaching $G$. }
	First, we estimate the probability that a confined RW starting at a point $x \in \partial B$ hits $G$ before exiting through $\Delta$. This will help control the probability of the event where $X$ and $\tilde{X}$ touch $G$ between $R_i$ and $D_i$ for the same $i \geq 1$.
	\begin{lemma}\label{lem:atteindre-G-avant-Delta}
		We have
		\begin{equation}
			\inf_{x \in \partial B} \Pbf^N_x (H_G < H_\Delta) \geq c N^{\gamma - 1} \, .
		\end{equation}
	\end{lemma}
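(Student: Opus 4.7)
The strategy is to reduce the statement to a classical gambler's-ruin estimate for the simple random walk (SRW), via the comparison~\eqref{eq:encadrement-proba}, since both $G$ and $\Delta^c \cap D_N$ are comfortably inside $B^\eps$ for $N$ large (recall $\gamma<1$ so $N^\gamma = \bar o(\eps N)$).

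First, take $C = \Delta^c \setminus G$, which is a subset of $B^\eps$ for all $N$ large. Then $\tau_C = H_G \wedge H_\Delta$ and the event $A = \{H_G < H_\Delta\}$ belongs to $\mathcal{F}_{\tau,C}$. Applying the lower bound of~\eqref{eq:encadrement-proba} gives, uniformly in $x\in \partial B$,
\[
\Pbf^N_x (H_G < H_\Delta) \;\geq\; \kappa\, \Pbf_x(H_G < H_\Delta)\,.
\]
It therefore suffices to prove that $\Pbf_x(H_G < H_\Delta) \geq c N^{\gamma-1}$ for $x \in \partial B$.

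Second, this is a standard gambler's-ruin estimate for the SRW in $d\geq 3$, between the two concentric spheres $\partial G = \partial B(x_0,\tfrac{\alpha}{2}N)$ and $\partial \Delta = \partial B(x_0,\alpha N + N^\gamma)$, starting on $\partial B = \partial B(x_0,\alpha N)$. Using the approximate harmonic function $z \mapsto |z-x_0|^{2-d}$ (as in the proof of Lemma~\ref{lem:gambler-ruin}, see~\cite[Prop.~1.5.10]{lawler2013intersections}), we obtain, uniformly in $x\in \partial B$,
\[
\Pbf_x(H_G < H_\Delta) \;=\; \frac{(\alpha N)^{2-d} - (\alpha N + N^\gamma)^{2-d}}{(\alpha N/2)^{2-d} - (\alpha N + N^\gamma)^{2-d}} \,+\, \grdO(N^{-1}).
\]
The numerator is $\asymp N^{\gamma+1-d}$ (by a first-order Taylor expansion of $r\mapsto r^{2-d}$ at $r=\alpha N$), while the denominator is $\asymp N^{2-d}$ (dominated by the inner term, by a factor $2^{d-2}-1$). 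Taking the ratio yields $c\,N^{\gamma-1}$, and combining with the previous step concludes the proof.

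The only mild subtlety is to make sure the gambler's-ruin estimate is uniform in $x \in \partial B$. This is automatic here, since the bound only depends on $|x-x_0| = \alpha N$ and the prefactor in the expansion $|x-x_0|^{2-d} - (\alpha N+N^\gamma)^{2-d} \asymp N^{\gamma+1-d}$ does not depend on the direction of $x - x_0$.
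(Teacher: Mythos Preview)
Your proof is correct and follows essentially the same approach as the paper: reduce from $\Pbf^N$ to $\Pbf$ via the lower bound in~\eqref{eq:encadrement-proba}, then apply a gambler's-ruin estimate between the concentric spheres $\partial G$ and $\partial \Delta$. The paper's proof is a one-liner citing Lemma~\ref{lem:gambler-ruin}; your version is in fact more explicit, since Lemma~\ref{lem:gambler-ruin} as stated concerns the annulus $B^\eps\setminus B$ rather than $\Delta^c\setminus G$, and you correctly go back to the underlying tool \cite[Prop.~1.5.10]{lawler2013intersections} and carry out the Taylor expansion directly.
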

	
	\begin{proof}
		This follows from $\Pbf^N_x (H_G < H_\Delta) \geq \kappa_1 \Pbf_x (H_G < H_\Delta)$ (recall \eqref{eq:encadrement-proba}) and Lemma \ref{lem:gambler-ruin}.
	\end{proof}
	
	With Lemma \ref{lem:atteindre-G-avant-Delta}, we see that reaching $G$ before $\Delta$ is a Bernoulli trial with success probability at least $c N^{\gamma - 1}$.

\smallskip
\noindent
{\it Step 2. Probability of being coupled starting from $\partial G$. }	
	For $y \in \partial \Delta$, we define 
	\[
	\mu_N(y) = \inf_{x \in \partial G} \Pbf^N_x (X_{H_\Delta} = y) \,,
	\] 
	which is a sub-probability measure on $\partial \Delta$. Starting at $z \in \partial G$, the law of $X_{H_\Delta}$ can be written as $\nu_z + \mu_N$ with $\nu_z \defeq \Pbf^N_z (X_{H_\Delta} \in \cdot) - \mu_N$. Defining $W$ a random variable with law $\mu_N/ \mu_N(\partial \Delta)$, then our coupling will consist in choosing $X_{D_i} = \tilde{X}_{D_i} = W$ with probability $\mu_N(\partial \Delta)$, while sampling $X_{D_i}$ and $\tilde{X}_{D_i}$ independently otherwise, according to the relevant exit point distributions.
	
	We prove that $\mu_N(\partial \Delta)$ is bounded from below by a constant, which means that at each simultaneous visit to $G$, there is a positive probability for the walks to coalesce on their way to $\Delta$.
	\begin{lemma}\label{lem:masse-inf-mesure-sortie}
		There is a $c_\mu \in (0,1)$, such that $\mu_N(\partial \Delta) \geq c_\mu$ for all $N$ large enough.
	\end{lemma}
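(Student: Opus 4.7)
The plan is to reduce the estimate to a comparison of the tilted-walk Poisson kernel from $\partial G$ to $\partial B$ with the classical one for the simple random walk. Since $\partial G$ lies at macroscopic distance $\alpha N/2$ from $\partial B$, any trajectory from $x \in \partial G$ to $\partial \Delta$ must cross $\partial B$ first, so the strong Markov property at $H_{\partial B}$ gives the decomposition
\[ \Pbf^N_x(X_{H_\Delta} = y) \,=\, \sum_{z \in \partial B} \Pbf^N_x(X_{H_{\partial B}} = z)\, \Pbf^N_z(X_{H_\Delta} = y)\,. \]
The key step will be to establish the following Poisson kernel lower bound: there exists $c_1 > 0$ (depending only on $\alpha, \eps, D, d$) such that, for all $N$ large enough, all $x \in \partial G$ and all $z \in \partial B$,
\begin{equation}\label{eq:plan-poisson-LB}
\Pbf^N_x(X_{H_{\partial B}} = z) \,\geq\, c_1 N^{1-d}\,.
\end{equation}

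To obtain~\eqref{eq:plan-poisson-LB}, I would apply the lower bound in~\eqref{eq:encadrement-proba} to the event $\{X_{H_{\partial B}} = z\}$, which yields $\Pbf^N_x(X_{H_{\partial B}} = z) \geq \kappa\, \Pbf_x(X_{H_{\partial B}} = z)$. The required estimate then follows from the classical discrete interior Poisson kernel bound on a Euclidean ball (see e.g.~\cite[Lem.~6.3.7]{lawlerRandomWalkModern2010}): for SRW starting at a point at macroscopic distance from the boundary, the harmonic measure on $\partial B$ is comparable to the uniform probability measure on $\partial B$, hence $\Pbf_x(X_{H_{\partial B}} = z) \asymp N^{1-d}$ uniformly for $x \in \partial G$ and $z \in \partial B$.

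Given~\eqref{eq:plan-poisson-LB}, plugging into the Markov decomposition and taking the infimum over $x \in \partial G$ yields $\mu_N(y) \geq c_1 N^{1-d} \sum_{z \in \partial B} \Pbf^N_z(X_{H_\Delta} = y)$. Summing over $y \in \partial \Delta$ and exchanging sums,
\[ \mu_N(\partial \Delta) \,\geq\, c_1 N^{1-d} \sum_{z \in \partial B} \Pbf^N_z(H_\Delta < +\infty) \,=\, c_1 N^{1-d} |\partial B| \,\geq\, c_\mu\,, \]
using that the tilted walk is an irreducible Markov chain on the finite state space $D_N$ (so $\Pbf^N_z(H_\Delta < +\infty) = 1$) and that $|\partial B| \asymp N^{d-1}$. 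The main obstacle is rather mild here: since we only need the one-sided estimate~\eqref{eq:plan-poisson-LB}, the reduction to SRW is clean, and we can entirely avoid the delicate joint spatio-temporal control of $\lambda_N^{-H_{\partial B}}$ that arose in the upper-bound arguments elsewhere in the paper.
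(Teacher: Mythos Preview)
Your proof is correct, but it takes a more roundabout route than the paper's. The paper applies the tilted-to-SRW lower bound~\eqref{eq:encadrement-proba} \emph{directly} to the event $\{X_{H_\Delta}=y\}$ (the trajectory up to $H_\Delta$ stays in $\Delta^c\subset B^\eps$, so the comparison is legitimate), obtaining $\mu_N(y)\geq \kappa\,\inf_{x\in\partial G}\Pbf_x(X_{H_\Delta}=y)$. Since $x\mapsto\Pbf_x(X_{H_\Delta}=y)$ is SRW-harmonic on all of $\Delta^c\supset B$, the Harnack inequality on $B$ (Lawler, Thm.~6.3.9) gives $\inf_{x\in\partial G}\Pbf_x(X_{H_\Delta}=y)\geq c\,\Pbf_{x_*}(X_{H_\Delta}=y)$ for any fixed $x_*\in G$, and summing over $y$ immediately yields $\mu_N(\partial\Delta)\geq c\kappa$. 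Your strong-Markov decomposition at $\partial B$ and the Poisson-kernel uniformity there do the job, but the intermediate passage through $\partial B$ is unnecessary: Harnack disposes of the $x$-dependence in one stroke. One small caveat on your citation: Lawler's Lemma~6.3.7 concerns the \emph{exit} distribution from a ball, whereas you need the distribution of $X_{H_{\partial B}}$, the first hit of the \emph{inner} boundary. The uniform lower bound you claim is still true (e.g.\ by applying Harnack to $x\mapsto\Pbf_x(X_{H_{\partial B}}=z)$, harmonic on $B\setminus\partial B$), but the reference as stated does not quite match.
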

	
	\begin{proof}
		With \eqref{eq:encadrement-proba},
		\[ \mu_N (\partial \Delta) = \sum_{y \in \partial \Delta} \inf_{x \in \partial G} \Pbf^N_x (X_{H_\Delta} = y) \geq \kappa_1 \sum_{y \in \partial \Delta} \inf_{x \in \partial G} \probaRW{x}{X_{H_\Delta} = y} \, . \]
		Since $x \mapsto \probaRW{x}{X_{H_\Delta} = y}$ is harmonic on $G$, by the Harnack inequality \cite[Theorem 6.3.9]{lawlerRandomWalkModern2010} there is $c > 0$ that depends neither on $N$ nor on $y$ such that for any $x_*\in G$, $\inf_{x \in \partial G} \probaRW{x}{X_{H_\Delta} = y} \geq c \probaRW{x_*}{X_{H_\Delta} = y}$, which means that
		\[ \mu_N(\partial \Delta) = \sum_{y \in \partial \Delta} \inf_{x \in \partial G} \Pbf^N_x (X_{H_\Delta} = y) \geq c \kappa_1 \sum_{y \in \partial \Delta} \probaRW{x_*}{X_{H_\Delta} = y} = c \kappa_1 \, . \qedhere \]
	\end{proof}


\smallskip
\noindent
{\it Conclusion. }	
Putting together Step 1 and Step 2, the coalescing time is dominated by a geometric random variable with success parameter at least $c c_\mu N^{\gamma-1}$, which gives the bound on $T^Y_{\mix}$.

Let us write the coupling in more detail.
For all $i \geq 1$, we define two independent sequences of i.i.d Bernoulli random variables: $(\xi^1_i)_{i \geq 1}$ with parameter $c N^{\gamma-1}$ and $(\xi^2_i)_{i\geq 1}$ with parameter $c_\mu$. We couple $(Y_i=(X_{R_i},X_{D_i}))_{i\geq 0}$ and~$(\tilde{Y}_i=(\tilde{X}_{R_i},\tilde{X}_{D_i}))_{i\geq 0}$ in the following way: if $X_{R_i} = \tilde{X}_{R_i}$, then $X_{D_i} = \tilde{X}_{D_i}$; if on the other hand $X_{R_i}\neq \tilde X_{R_i}$, then
		\begin{itemize}
			\item If $\xi^1_i = 0$, we independently sample $X_{D_i} \in \partial \Delta$ and $\tilde{X}_{D_i} \in \partial \Delta$  according to the respective probabilities $\Pbf^N_{X_{R_i}} (X_{H_\Delta} \in \cdot)$ and $\Pbf^N_{\tilde{X}_{R_i}} (X_{H_\Delta} \in \cdot)$.
			\item If $\xi^1_i = 1$, we independently sample $U_i\in \partial G$ and $\tilde{U}_i\in \partial G$ according to the respective probabilities $\Pbf^N_{X_{R_i}} (X_{H_G} \in \cdot \, | \, H_G < H_\Delta)$ and $\Pbf^N_{X_{R_i}} (X_{H_G} \in \cdot \, | \, H_G < H_\Delta)$ --- this imposes that $X$ and $\tilde X$ \textit{both} reach $G$. Then:
			\begin{itemize}
				\item If $\xi^2_i = 0$, we independently sample $X_{D_i}$, $\tilde{X}_{D_i}$ according to the respective probabilities $\bar{\nu}_{U_i}$, $\bar{\nu}_{\tilde{U}_i}$, where $\bar \nu_z = \nu_z/\nu_z(\partial \Delta)$  with $\nu_z:= \Pbf^N_z (X_{H_\Delta} \in \cdot) - \mu_N$ as defined above.
				
				\item If $\xi^2_i = 1$, we sample $X_{D_i} = \tilde{X}_{D_i}$ according to $\bar{\mu}_N = \mu_N/\mu_N(\partial \Delta)$.
			\end{itemize}
		\end{itemize}
After that, if $X_{D_i} \neq \tilde{X}_{D_i}$, we independently sample $X_{R_{i+1}}$ and $\tilde{X}_{R_{i+1}}$ according to $\Pbf^N_{X_{D_i}} (X_{H_B} \in \cdot)$ and $\Pbf^N_{\tilde{X}_{D_i}} (X_{H_B} \in \cdot)$ respectively;
if on the other hand $X_{D_i} = \tilde{X}_{D_i}$, then $X_{R_{i+1}}=\tilde{X}_{R_{i+1}}$, and in practice $Y_j=\tilde{Y}_j$ for all $j \geq i+1$.

To summarise, under this coupling, the set $\mathset{ i \geq 1 \, | \, \xi^1_i = 1}$ collects $i$'s for which both $X$ and $\tilde X$ hit $G$ before touching $\Delta$. For such $i$'s, $\xi^2_i$ indicates if we force $X$ and $\tilde X$ to attain $\Delta$ on the same site. The coalescence time is thus no larger than the first $i \geq 1$ for which $\xi^1_i = \xi^2_i = 1$.	
In particular, under this coupling, that we denote by $\mathbf{Q}$, the probability of $Y$ and $\tilde{Y}$ not coalescing before step $k$ can be bounded from above by
		\[ \mathbf{Q}(Y_k \neq \tilde{Y}_k) \leq \mathbf{Q}\big(\forall i \leq k, \xi^1_i \xi^2_i = 0 \big) \leq \big( 1 - c_\mu c N^{\gamma-1} \big)^k \leq e^{-c' k N^{\gamma-1}} \, , \]
		in which $c'$ does not depend on $N$. Therefore, taking $k \geq \tfrac{1}{c'}N^{1-\gamma} \log 4$, the right-hand side is less than $\tfrac14$, thus proving the upper bound for $T^Y_{\mix}$ using \eqref{eq:borne-tps-mixage}.

	\subsubsection{Upper bound on the mixing time of interlacements}
	
	To get a coalescing coupling in the case of tilted interlacements, recall that $Z$ is given by excursions~$w_i^j$ of independent tilted RW trajectories $w^j$ starting from $\bar{e}_B$. In the same way, a copy $\tilde{Z}$ is given by excursions $\tilde{w}_i^j$ and trajectories $\tilde{w}^j$ starting from $\bar{e}_B$.
	We will couple two copies $Z$ and $\tilde{Z}$ in a way to have some $k$ such that both $Z_k$ and $\tilde{Z}_k$ are given by the first excursion of a trajectory, that is by $w_0^j$ and $\tilde{w}_0^{\tilde{\jmath}}$ for some $j,\tilde{\jmath}$; put otherwise, if $X$ and $\tilde{X}$ are the underlying tilted RW trajectories, we will force $X,\tilde X$ to escape to infinity for the same $(k-1)$-th excursion. By \eqref{eq:borne-inf-proba-Delta-infini}, escaping to infinity when starting from $z \in \partial \Delta$ is a Bernoulli trial with success probability $\Pbf^{\Psi_N}_z (\bar{H}_B = +\infty) \geq c N^{\gamma - 1} = p_N$. Using the same trial for both $w_i^j$ and $\tilde{w}_i^j$ thus allows for a quick coupling.
	\smallskip
	
Again, let us write the coupling in more details. 
We write $Z_{i}=(X_{R_i},X_{D_i})$ and $\tilde Z_{i} = (\tilde X_{R_i},\tilde X_{D_i})$. Let $(\xi_i)_{i \geq 1}$ be a sequence of i.i.d Bernoulli random variables with parameter $p_N$. The coupling of $Z$ and $\tilde{Z}$ proceeds as follows: for all $i \geq 1$,
		\begin{itemize}
			\item If $\xi_i = 0$, independently sample $Z_{R_{i+1}}$ according to $\nu_{Z_{D_i}}$ and $\tilde{Z}_{R_{i+1}}$ with $\nu_{\tilde{Z}_{D_i}}$, where for $w\in \partial \Delta$, the law $\nu_w$ is defined by: for any $z \in \partial B$
			\[ (1-p_N) \nu_w(z) = \Pbf^{\Psi_N}_{w} (X_{H_B} = z \, , \, \bar{H}_B < +\infty) + \left[ \Pbf^{\Psi_N}_w (\bar{H}_B = +\infty) - p_N \right] \bar{e}^T_B(z) \, . \] 
			\item If $\xi_i = 1$, sample $Z_{R_{i+1}} = \tilde{Z}_{R_{i+1}}$ according to $\bar{e}^T_B$. Afterwards, set $Z_j = \tilde{Z}_j$ for all $j > i$.
		\end{itemize}
		Under this coupling, the first $i \geq 1$ that satisfies $\xi_i = 1$ is such that $Z_{R_{i+1}} = \tilde{Z}_{R_{i+1}}$ is the entrance point in $B$ for trajectories of the two copies of the tilted RI. The probability of not having coalesced before step $k$ is then given by
		\[ \mathbf{Q}(Z_k \neq \tilde{Z}_k) \leq \mathbf{Q}\big( \forall i \leq k, \xi_i = 0 \big) = \big(1 - p_N \big)^k \leq \big(1 - c N^{\gamma-1})^k \leq e^{-c k N^{\gamma-1}} \, , \]
		where $c > 0$ is independent from $N$. Again, taking $k \geq \tfrac{1}{c}N^{1-\gamma} \log 4$ proves the upper bound for $T^Z_{\mix}$ using \eqref{eq:borne-tps-mixage}.

	\subsection{Controlling the variances for Theorem \ref{th:soft-local-times}}\label{ssec:variance-1}
	
	Recall the terms $\mathrm{Var}_{\pi^\circ}(\rho_{(x,y)}^\circ)$ that appear in the upper bound of Theorem \ref{th:soft-local-times}, where
	\[ \rho_{(x,y)}^\circ(z,w) \defeq \frac{p^\circ((z,w),(x,y))}{\mu(x,y)} \, , \]
	with $\mu$ a measure on $\Sigma$ having full support. A natural measure to take is $\mu(x,y) \defeq \Pbf^N_x(X_{H_\Delta} = y)$ since it appears in both transition probabilities $p^Y$ and $p^Z$, see~\eqref{eq:noyau-Y} and \eqref{eq:noyau-Z}. With this choice, we therefore have
	\[ \rho^Y_{(x,y)}(z,w) = \Pbf^N_w(X_{H_B} = x) \, , \]
	\[ \rho^Z_{(x,y)}(z,w) = \Pbf^{\Psi_N}_w(X_{H_B} = x, H_B < +\infty) + \bar{e}_B^T(x) \Pbf^{\Psi_N}_w(H_B = +\infty) \, . \]	
	To control the variances $\mathrm{Var}_{\tilde{\pi}}(\rho_{(x,y)}^\circ)$, we need to control these quantities. Since $\bar{e}_B^T(x)$ and $\Pbf^{\Psi_N}_w(H_B = +\infty)$ were both studied in Section \ref{ssec:propriete-entrelac-phiN}, we now seek estimates on the law of the return point to $B$.
	
	\begin{proposition}[Return point for $Y$]\label{prop:point-retour-B}
		For all $y \in \partial \Delta$ and $x \in \partial B$, we have
		\begin{equation}\label{eq:prop-point-retour-B:upper}
			\Pbf^N_y (X_{H_B} = x) \leq c N^{-\gamma (d-1)} \, .
		\end{equation}
		Moreover, for any $x \in \partial B$, there  are at least $c^{-1} N^{\gamma(d-1)}$ points $y \in \partial \Delta$ such that
		\begin{equation}\label{eq:prop-point-retour-B:lower}
			\Pbf^N_y (X_{H_B} = x) \geq c N^{-\gamma (d-1)} \, . \qedhere
		\end{equation}
	\end{proposition}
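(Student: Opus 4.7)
}
The strategy is to compare $\Pbf^N$ with the SRW measure $\Pbf$ via~\eqref{eq:encadrement-proba}, and then rely on classical hitting estimates (discrete Poisson kernel) for the SRW. Heuristically, from $y\in \partial \Delta$ at distance $\simeq N^\gamma$ of $\partial B$, a SRW trajectory first hits $\partial B$ roughly uniformly on a ``shadow'' region of size $\simeq N^\gamma$ around its radial projection onto $\partial B$; that region contains $\simeq N^{\gamma(d-1)}$ points, so hitting a specific point has probability at most $\simeq N^{-\gamma(d-1)}$, with matching lower bound for points inside the shadow.

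For the lower bound, \eqref{eq:encadrement-proba} directly gives $\Pbf^N_y(X_{H_B}=x) \geq \kappa \Pbf_y(X_{H_B}=x)$, so it suffices to prove the analogous SRW lower bound for a well-chosen set of $y$'s. Fix $x\in\partial B$ and let $\mathcal{Y}_x = \{ y \in \partial \Delta : |y-x| \leq M N^\gamma \}$ for some large constant $M$; this is a spherical cap on $\partial \Delta$ and has cardinality at least $c^{-1} N^{\gamma(d-1)}$. For $y\in\mathcal{Y}_x$, the discrete Poisson kernel estimate for a ball (e.g.\ \cite[Thm.~6.3.8 / Prop.~6.4.2]{lawlerRandomWalkModern2010}), combined with a Harnack-type argument, yields $\Pbf_y(X_{H_B}=x) \geq c N^{-\gamma(d-1)}$.

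For the upper bound, apply \eqref{eq:encadrement-proba} with $C=\Delta^c \setminus B$ and $A=\{X_{H_B}=x\}$. The first summand reduces to bounding $\Pbf_y(X_{H_B}=x)$ by $c N^{-\gamma(d-1)}$, which again follows from the discrete Poisson kernel (together with a Harnack-type argument to treat all $y\in \partial \Delta$ uniformly, not only those aligned with $x$). For the tail sum, the key observation is that $C$ is an annular shell of width $N^\gamma$, so Proposition~\ref{prop:rester-ds-boule-anneau} gives
\[
\Pbf_y(X_{H_B}=x,\, \tau_C \geq kN^2) \leq \Pbf_y(\tau_C \geq k N^2) \leq C e^{-c_a k N^{2(1-\gamma)}}.
\]
Since $\gamma<1$, this decay absorbs the growth $e^{c_0 k}$ for $N$ large, so the whole sum is bounded by $e^{-c N^{2(1-\gamma)}}$, which is super-polynomially small in $N$ and hence negligible compared to $N^{-\gamma(d-1)}$.

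The main obstacle is establishing the discrete Poisson kernel estimates with the correct orders, uniformly in $y\in\partial\Delta$ and $x\in\partial B$, on the ``thin'' annulus $C$ of width $N^\gamma \ll N$. One route is to quote directly the hitting distribution for a discrete ball from \cite{lawlerRandomWalkModern2010} (using that $B$ is a ball), combined with a Harnack comparison at scale $N^\gamma$ to identify the dependence on $|y-x|$. Alternatively one can couple SRW with Brownian motion at scale $N^\gamma$ and use the continuous Poisson kernel for a half-space as a local approximation of $\partial B$, whose curvature radius $\alpha N$ is much larger than $N^\gamma$. Once these SRW estimates are in hand, the rest of the proof is just assembling the pieces above.
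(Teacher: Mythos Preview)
Your lower bound sketch is essentially the paper's argument (modulo citing \cite[Lem.~5.3]{teixeiracoupling} instead of invoking a Poisson-kernel/Harnack argument directly). The upper bound, however, has a genuine gap.

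The issue is that the event $A=\{X_{H_B}=x\}$ is \emph{not} $\mathcal{F}_{\tau,C}$-measurable for $C=\Delta^c\setminus B$, so \eqref{eq:encadrement-proba} cannot be applied with this choice. Starting from $y\in\partial\Delta$, the tilted walk may exit the thin annulus $C$ through $\partial\Delta$ into $\Delta$, wander inside the \emph{macroscopic} region $\Delta\subset D_N$ for a time of order $N^2$, and only later return and hit $B$. Thus $H_B$ can be much larger than $\tau_C$, and your tail bound $\Pbf_y(\tau_C\ge kN^2)\le Ce^{-c_a kN^{2(1-\gamma)}}$ controls the wrong stopping time. The relevant time in \eqref{eq:proba-htransform-RW-tuee} is $H_B$ (with the constraint $H_B<\tau_{D_N}$), whose tail under $\Pbf_y$ decays only like $e^{-ck}$ for some constant $c$ depending on the first eigenvalue of $D_N\setminus B$; this does not obviously beat $e^{c_0k}$, and even if it does (via an eigenvalue comparison as in Lemma~\ref{lem:vp-retire-boule}), you would only get a constant bound, losing the factor $N^{-\gamma(d-1)}$.

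The paper's fix is to split according to whether the walk reaches $\partial B^\delta$ (a macroscopic distance from $B$) before $H_B$. If it does not, one can legitimately work inside the annulus $B^\delta\setminus B$ and combine the thin-annulus tail bound with a Markov-property argument (conditioning on whether the walk revisits $\partial\Delta$) to recover the factor $N^{-\gamma(d-1)}$. If it does reach $\partial B^\delta$, one uses $\Pbf^N_y(H_{\partial B^\delta}<H_B)\le cN^{\gamma-1}$ and the strong Markov property at $H_{\partial\Delta}\circ\theta_{H_{\partial B^\delta}}$ to get a bound of the form $cN^{\gamma-1}\sup_{z\in\partial\Delta}\Pbf^N_z(X_{H_B}=x)$, which closes via a bootstrap. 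Your plan is missing both this decomposition and the bootstrap step.
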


	\begin{proof}
		We first write, using~\eqref{eq:proba-htransform-RW-tuee}
		\begin{equation}\label{eq:preuve-prop-retour-B:proba}
			\Pbf^N_y (X_{H_B} = x) = \frac{\phi_N(x)}{\phi_N(y)} \mathbf{E}_y\bigg[ \lambda_N^{-H_B} \mathbbm{1}_{\big\{H_B < H_{D_N^c} ,\ X_{H_B} = x\big\}} \bigg] \, .
		\end{equation}
	
\smallskip
\noindent
\textit{Lower bound. }	
	Since $\lambda_N < 1$, we get a first lower bound using Proposition~\ref{prop:encadrement-ratio}:
		\[ \Pbf^N_y (X_{H_B} = x) \geq \kappa_1 \probaRW{y}{H_B < H_{D_N^c}, X_{H_B} = x} = \kappa_1 \Big( \Pbf_y (X_{H_B} = x) - \Pbf_y(H_B > H_{D_N^c}, X_{H_B} = x) \Big). \]
		By \cite[Lemma 5.3]{teixeiracoupling}, there are at least $c^{-1} N^{\gamma(d-1)}$ points $y \in \partial \Delta$ such that $\probaRW{y}{X_{H_B} = x} \geq c N^{-\gamma (d-1)}$. On the other hand, using the (strong) Markov property,
		\[ \probaRW{y}{H_B > H_{D_N^c}, X_{H_B} = x} = \Ebf_{y}\bigg[ \mathbbm{1}_{\big\{H_B > H_{D_N^c}\big\}} \probaRW{X_{H_{D_N^c}}}{X_{H_B} = x} \bigg] \, . \]
		Since $\partial D_N^c \subseteq (B_N^\eps)^c$, using \cite[Prop. 6.5.4]{lawlerRandomWalkModern2010} and the notation of \eqref{eq:def-cap-mes-eq-RI} we get
		\[ \sup_{z \in \partial D_N^c} \probaRW{z}{X_{H_B} = x} = \sup_{z \in \partial D_N^c} \bar{e}_B(x) \left[ 1 + c \frac{\alpha N}{d(z,B)}(1 + \bar{o}(1)) \right] \leq c \bar{e}_B(x) \leq c N^{-(d-1)} \, . \]
		In particular, since $\gamma < 1$ we get
		\[ \Pbf^N_y (X_{H_B} = x) \geq \kappa_1 \probaRW{y}{X_{H_B} = x} (1 + \bar{o}(1)) \]
		for the $c^{-1} N^{\gamma (d-1)}$ points $y \in \partial \Delta$, thus proving \eqref{eq:prop-point-retour-B:lower}.
		
\smallskip
\noindent	
\textit{Upper bound. }
	We first split $\Pbf^N_y (X_{H_B} = x)$ according to the value of $H_B$ and according to whether or not the walk reaches $\partial B^\delta$ for some $\delta \in (0, \eps)$ (whose value will be fixed later):
		\begin{equation}
			\begin{split}
				\Pbf^N_y (X_{H_B} = x) = \Pbf^N_y (H_B < N^2, X_{H_B} = x) &+ \Pbf^N_y (H_{\partial B^\delta} > H_B \geq N^2, X_{H_B} = x) \\
				&\quad + \Pbf^N_y (H_{\partial B^\delta} < H_B, H_B \geq N^2, X_{H_B} = x) \, .
			\end{split}
			\label{eq:preuve-prop-retour-B:upper}
		\end{equation}
		
		The first term can be bounded from above using~\eqref{eq:proba-htransform-RW-tuee} (similarly to \eqref{eq:preuve-prop-retour-B:proba}) together with Proposition \ref{prop:encadrement-ratio} and \eqref{eq:encadrement-lambda2}:
		\[ \Pbf^N_y (H_B < N^2, X_{H_B} = x) \leq \frac{c}{\kappa_1} e^{c_0} \probaRW{y}{H_B < H_{D_N^c} , X_{H_B} = x} \leq c N^{-\gamma(d-1)} \, , \]
		where we used \cite[Lem. 8.6]{popov2015soft} to get a bound on $\probaRW{y}{X_{H_B} = x}$ uniformly for $y\in \partial \Delta$.

		For the second term, we claim that uniformly in $x \in \partial B, y \in \partial \Delta$
		\begin{equation}\label{eq:prop:point-retour-B:claim-1}
			\Pbf^N_y (H_{\partial B^\delta} > H_B \geq N^2, X_{H_B} = x) \leq c N^{-\gamma(d-1)} \, .
		\end{equation}
		
		For the third term, we claim that uniformly in $x \in \partial B, y \in \partial \Delta$
		\begin{equation}\label{eq:prop:point-retour-B:claim-2}
			\Pbf^N_y (H_{\partial B^\delta} < H_B, H_B \geq N^2, X_{H_B} = x) \leq c N^{\gamma-1} \sup_{z \in \partial \Delta} \Pbf^N_z (X_{H_B} = x) \, .
		\end{equation}
		Plugging these estimates in~\eqref{eq:preuve-prop-retour-B:upper}, we get
		\[ \sup_{x \in \partial B, y \in \partial \Delta} \Pbf^N_y (X_{H_B} = x) \leq C N^{-\gamma (d-1)} + c N^{\gamma-1} \sup_{x \in \partial B, y \in \partial \Delta} \Pbf^N_y (X_{H_B} = x) \, . \]
		Taking $N$ large enough so that $c N^{\gamma-1} < 1/2$ proves \eqref{eq:prop-point-retour-B:upper}.

		We now prove the two claims, starting with \eqref{eq:prop:point-retour-B:claim-1}. Applying \eqref{eq:encadrement-proba} with $C = B^\delta \setminus B$, we get that
		\begin{equation}
\label{eq:prop:point-retour-B:1ere-eq-claim-1}		
		\Pbf^N_y (H_{\partial B^\delta} > H_B \geq N^2, X_{H_B} = x) \leq \frac{c}{\kappa_1} \sum_{k \geq 1} e^{c_0(k+1)}P_N(\delta,k)  \, . 
		\end{equation}
	where we have set
	$P_N(\delta,k)  := \probaRW{y}{H_B \in [k,k+1[ N^2, H_{\partial B^\delta} > k N^2 \wedge H_B, X_{H_B} = x}$.
	We now treat $P_N(\delta,k)$ for every $k \geq 1$. We use the Markov property at time $\tfrac12 k N^2$, from which two cases appear: either the walk does not return to $\Delta$ before hitting $B$, or it returns to $\Delta$ in which case since $H_B < H_{\partial B^\delta}$, the walk will cross $\partial \Delta$ again.

	By Proposition~\ref{prop:rester-ds-boule-anneau} (more precisely its second inequality) applied to $(B \cup \Delta)^c$, we have
	\begin{equation}
		\probaRW{y}{H_B \wedge H_{\partial B^\delta} \geq k N^2,  \forall\, t \in [k/2, k)N^2, X_t \in (B\cup\Delta)^c } \leq C e^{- c_a \frac12 k N^{2(1-\gamma)}} \, .
	\end{equation}
	We also get
	\begin{multline*}
		\probaRW{y}{H_B \wedge H_{\partial B^\delta} \geq k N^2, \exists t \in [0,k/2)N^2, X_t \in \Delta, X_{H_B} = x} \\ \leq \probaRW{y}{H_B \wedge H_{\partial B^\delta} \geq \tfrac12 k N^2} \sup_{w \in \Delta \cap B^\delta} \Pbf_w(H_B < H_{\partial B^\delta} , X_{H_B} = x) \, ,
	\end{multline*}
	Thus, using again Proposition~\ref{prop:rester-ds-boule-anneau} and using the Markov property at time $H_{\partial \Delta}$, we get
	\begin{equation*}
		 \probaRW{y}{H_B \wedge H_{\partial B^\delta} \geq k N^2, \exists t \in [0,k/2)N^2, X_t \in \Delta, X_{H_B} = x} \leq c e^{- c_a k/2 \delta^2} \sup_{z \in \partial \Delta} \probaRW{z}{X_{H_B} = x} \, .
	\end{equation*}
Now, with \cite[Lem. 8.6]{popov2015soft}, we have $\sup_{z \in \partial \Delta} \probaRW{z}{X_{H_B} = x} \leq c N^{-\gamma(d-1)}$ uniformly in $x \in \partial B$. Thus, injecting these estimates in \eqref{eq:prop:point-retour-B:1ere-eq-claim-1} yields
		\begin{equation}\label{eq:retour-en-B-long}
			\Pbf^N_y (H_{\partial B^\delta} > H_B \geq N^2, X_{H_B} = x) \leq \sum_{k \geq 1} e^{c_0(k+1)} \left[ c N^{-\gamma(d-1)} e^{- c_a k/2\delta^2} + C e^{- c_a \frac12 k N^{2(1-\gamma)}} \right] \, .
		\end{equation}
		Taking $\delta^2 < c_a/2c_0$ and $N > c_a c_0^{1/2(1-\gamma)}$, this proves that the right-hand side of \eqref{eq:retour-en-B-long} is bounded by a constant times $N^{-\gamma(d-1)}$, and the claim \eqref{eq:prop:point-retour-B:claim-1} follows.

		To get the second claim \eqref{eq:prop:point-retour-B:claim-2}, we forget about the condition $H_B \geq N^2$ and use the Markov property, first at time $H_{\partial B^\delta}$, then at time $H_{\partial \Delta}$: we obtain
		\[ \begin{split}
			\Pbf^N_y (H_{\partial B^\delta} < H_B, H_B \geq N^2, X_{H_B} = x) &\leq \Ebf^{N}_y \left[ \indic{H_{\partial B^\delta}  < H_B} \Ebf^{N}_{X_{H_{\partial B^\delta}}} \left[ \Pbf^N_{X_{H_{\partial \Delta}}} ( X_{H_B} = x)  \right] \right]\\
			&\leq \Pbf^N_y (H_{\partial B^\delta} < H_B) \sup_{z \in \partial \Delta} \Pbf^N_z (X_{H_B} = x) \, .
		\end{split} \]
		With \eqref{eq:borne-inf-proba-Delta-infini}, we have $\Pbf^N_y (H_{\partial B^\delta} < H_B) \leq c N^{\gamma-1}$ uniformly in $y \in \partial \Delta$, which completes the proof of~\eqref{eq:prop:point-retour-B:claim-2}.
	\end{proof}

	\begin{corollary}[Return point for $Z$]
	\label{cor:point-retour-entrelac}
		Proposition \ref{prop:point-retour-B} holds for $\Pbf^{\Psi_N}( \, \cdot \, \cap \mathset{H_B < +\infty})$ instead of $\Pbf^N( \, \cdot \, )$.
	\end{corollary}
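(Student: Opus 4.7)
The plan is to adapt the proof of Proposition~\ref{prop:point-retour-B}, exploiting that $\Psi_N \equiv \phi_N$ on $B^\eps$. Fix any $\eps' \in (0,\eps)$; for $N$ large, every $z \in B^{\eps'}$ has all its neighbors in $B^\eps$, so by \eqref{eq:phiharmonique} we have $\bar{\Psi}_N(z) = \bar{\phi}_N(z) = \lambda_N \phi_N(z)$ and therefore $p_{\Psi_N}(z,\cdot) = p_N(z,\cdot)$. Writing $\tau_{\eps'} \defeq H_{(B^{\eps'})^c}$, this implies that for any event $A$ measurable with respect to the walk stopped at $\tau_{\eps'}$ and any $y \in B^{\eps'}$, $\Pbf^{\Psi_N}_y(A) = \Pbf^N_y(A)$. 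Since $\gamma<1$ gives $\partial \Delta \subset B^{\eps'}$ for $N$ large, this applies to every $y\in \partial \Delta$.

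For the lower bound, the plan is to write
\[
\Pbf^{\Psi_N}_y(X_{H_B}=x, H_B < +\infty) \geq \Pbf^{\Psi_N}_y(X_{H_B}=x, H_B < \tau_{\eps'}) = \Pbf^N_y(X_{H_B}=x, H_B < \tau_{\eps'}),
\]
then reduce via \eqref{eq:encadrement-proba} to $\kappa \Pbf_y(X_{H_B}=x, H_B < \tau_{\eps'}) \geq \kappa \bigl(\Pbf_y(X_{H_B}=x) - \Pbf_y(\tau_{\eps'} < H_B, X_{H_B}=x)\bigr)$. By \cite[Lem.~5.3]{teixeiracoupling}, the first SRW probability is at least $c N^{-\gamma(d-1)}$ for at least $c^{-1} N^{\gamma(d-1)}$ points $y$. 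Using the strong Markov property at $\tau_{\eps'}$, the correction is bounded by $\Pbf_y(\tau_{\eps'} < H_B) \cdot \sup_z \Pbf_z(X_{H_B}=x)$: the first factor is $\leq C N^{\gamma-1}$ by Lemma~\ref{lem:gambler-ruin} and the sup is $\leq C N^{-(d-1)}$ by \cite[Prop.~6.5.4]{lawlerRandomWalkModern2010}; since $\gamma<1$ implies $\gamma-d < -\gamma(d-1)$, this correction is $o(N^{-\gamma(d-1)})$, giving the desired lower bound.

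For the upper bound, the plan is to decompose
\[
\Pbf^{\Psi_N}_y(X_{H_B}=x, H_B < +\infty) \leq \Pbf^N_y(X_{H_B}=x) + \Pbf^{\Psi_N}_y(\tau_{\eps'} < H_B < +\infty, X_{H_B}=x),
\]
where the first term uses the kernel-coincidence on $B^{\eps'}$ and is bounded by $c N^{-\gamma(d-1)}$ via Proposition~\ref{prop:point-retour-B}. For the second term, the strong Markov property at $\tau_{\eps'}$ bounds it by $\Pbf^{\Psi_N}_y(\tau_{\eps'} < H_B) \cdot \sup_{z \in \partial (B^{\eps'})^c} \Pbf^{\Psi_N}_z(X_{H_B}=x, H_B < +\infty)$. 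The first factor equals $\Pbf^N_y(\tau_{\eps'} < H_B) \leq C N^{\gamma-1}$ by the kernel-coincidence together with the SRW gambler's-ruin estimate already used above.

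The main obstacle will be bounding the supremum by $C N^{-(d-1)}$: heuristically, once the walk starts at macroscopic distance from~$B$, its hitting distribution on $\partial B$ (conditional on $H_B<+\infty$) approximates the tilted harmonic measure $\bar{e}_B^{\Psi_N}$, which is uniformly $\asymp N^{-(d-1)}$ by Lemma~\ref{lem:encadrement-mes-harm-tilt}. I would formalize this via a last-exit decomposition for the tilted walk in the spirit of Lemma~\ref{lem:mes-inv-Z}, combined with a Harnack-type comparison for the $p_{\Psi_N}$-harmonic function $z \mapsto \Pbf^{\Psi_N}_z(X_{H_B}=x, H_B<+\infty)$ outside~$B$; on $\ZZ^d \setminus B^\eps$ where $\Psi_N \equiv 1$, this reduces to standard SRW harmonic-measure estimates. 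Combining the two factors then yields a second-term contribution of order $N^{\gamma-1} \cdot N^{-(d-1)} = N^{\gamma-d} = o(N^{-\gamma(d-1)})$, completing the proof.
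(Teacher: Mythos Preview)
Your lower bound is correct and essentially identical to the paper's.

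For the upper bound, your decomposition matches the paper's, but you leave the key step --- bounding $\sup_{z \in \partial (B^{\eps'})^c} \Pbf^{\Psi_N}_z(X_{H_B}=x, H_B<+\infty)$ by $C N^{-(d-1)}$ --- as a sketch invoking a last-exit decomposition and a Harnack-type comparison that are not developed in the paper and would require genuine additional work (in particular, a Harnack inequality for the $\Psi_N$-walk in the annulus $B^\eps\setminus B$, where the conductances are not constant, is not standard). The paper closes the argument much more simply by a bootstrap: once the walk has reached $\partial B^\delta$ (your $\partial B^{\eps'}$), it must cross $\partial \Delta$ before hitting $B$, so one applies the Markov property once more at $H_{\partial \Delta}\circ\theta_{H_{\partial B^\delta}}$ to obtain
\[
\Pbf^{\Psi_N}_y(H_B>H_{\partial B^\delta},\,X_{H_B}=x)\;\le\;\Pbf^{\Psi_N}_y(H_B>H_{\partial B^\delta})\,\sup_{w\in\partial\Delta}\Pbf^{\Psi_N}_w(X_{H_B}=x).
\]
The supremum on the right is over the \emph{same} set $\partial\Delta$ as the quantity being bounded, so the inequality is self-referential with prefactor $cN^{\gamma-1}$; taking $N$ large enough that $cN^{\gamma-1}\le 1/2$ and absorbing gives the upper bound directly, with no new estimate needed. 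Your approach is salvageable by exactly this observation (one extra Markov step at $\partial\Delta$), but the direct $N^{-(d-1)}$ bound you aim for is unnecessary.
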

	
	\begin{proof}
For the lower bound, observe that
		\[ 
		\Pbf^{\Psi_N}_y(X_{H_B} = x)  \geq  \Pbf^{\Psi_N}_y(H_B < H_{\partial B^\eps}, X_{H_B} = x)   = \Pbf^N_y(H_B < H_{\partial B^\eps}, X_{H_B} = x) \, , 
		\]
		since $\Pbf^{\Psi_N}_y$ and $\Pbf^N_y$ coincide on $B^\eps$. With the lower bound \eqref{eq:encadrement-proba} we get
		\begin{multline*}
\Pbf^N_y(H_B < H_{\partial B^\eps}, X_{H_B} = x)  \geq \kappa_1 \Pbf_y(H_B < H_{\partial B^\eps}, X_{H_B} = x) \\
 = \kappa_1 (\Pbf_y(X_{H_B} = x) - \Pbf_y(H_B > H_{\partial B^\eps}, X_{H_B} = x)) \, .
		\end{multline*}
		Similarly as in the proof of the lower bound in Proposition \ref{prop:point-retour-B} we get the desired bound.
		
\smallskip
For the upper bound, we decompose the probability according to whether $H_B < H_{\partial B^{\eps}}$ or $H_B > H_{\partial B^{\eps}}$.
First, notice that 
		\[ \Pbf^{\Psi_N}_y(H_B < H_{\partial B^\eps}, X_{H_B} = x) = \Pbf^N_y(H_B < H_{\partial B^\eps}, X_{H_B} = x) \leq \Pbf^N_y(X_{H_B} = x) \leq c N^{-\gamma(d-1)} \, , \]
		where we have used \eqref{eq:prop-point-retour-B:upper}.
On the other hand, in order to avoid managing the effect of $\Psi_N$ jumping to $1$ at $\partial B^\eps$, we choose $\delta \in (0,\eps)$. Then, using the Markov property at times $H_{\partial B^\delta}$ and $H_{\partial \Delta} \circ \theta_{H_{\partial B^\delta}}$ yields
		\[ \Pbf^{\Psi_N}_y(H_B > H_{\partial B^\eps}, X_{H_B} = x) \leq \Pbf^{\Psi_N}_y(H_B > H_{\partial B^\delta}) \sup_{w \in \partial \Delta} \Pbf^{\Psi_N}_w(X_{H_B} = x) \, . \]
		Since $B^\delta \subset B^\eps$, we have $\Pbf^{\Psi_N}_y(H_B > H_{\partial B^\delta}) = \Pbf^N_y(H_B > H_{\partial B^\delta})$ (recall the comment under \eqref{eq:kernel-RW-tilt}): thus, Lemma \ref{lem:toucher-B-avant-Beps-start-Delta} (replacing $\eps$ with $\delta$) implies that $\Pbf^{\Psi_N}_y(H_B > H_{B^\delta}) \asymp N^{\gamma - 1}$. Assembling the above estimates therefore yields
		\[ \Pbf^{\Psi_N}_y(X_{H_B} = x) \leq C N^{-\gamma(d-1)} + c N^{\gamma-1} \sup_{w \in \partial \Delta} \Pbf^{\Psi_N}_w(X_{H_B} = x) \, . \]
Then, taking $N$ large enough so that $c N^{\gamma-1}\leq 1/2$ gives the result.
	\end{proof}
	
	Recall the expressions for $\rho^\circ_{(x,y)}(z,w)$ at the beginning of this section
	\[ \begin{split}
		\rho^Y_{(x,y)}(z,w) &= \Pbf^N_w(X_{H_B} = x) \, , \\
		\rho^Z_{(x,y)}(z,w) &= \Pbf^{\Psi_N}_w(X_{H_B} = x, H_B < +\infty) + \bar{e}_B^{\Psi_N}(x) \Pbf^{\Psi_N}_w(H_B = +\infty) \, .
	\end{split} \]
	Combining Proposition \ref{prop:point-retour-B}, Corollary \ref{cor:point-retour-entrelac}, Lemma \ref{lem:encadrement-mes-harm-tilt} and Lemma \ref{lem:toucher-B-avant-Beps-start-Delta} we get the following upper bounds for $N$ large enough: uniformly for $(x,y),(z,w)\in \Sigma$,
	\begin{equation}\label{eq:rho-upper-bound}
		\rho^Y_{(x,y)}(z,w) \leq cN^{-\gamma(d-1)} \, , \qquad \rho^Z_{(x,y)}(z,w) \leq cN^{-\gamma(d-1)} + c N^{1-d} N^{\gamma - 1} \leq c' N^{-\gamma(d-1)} \, .
	\end{equation}
	For lower bounds, we can use again Proposition \ref{prop:point-retour-B} and Corollary \ref{cor:point-retour-entrelac} to get that, for every $x\in \partial B$, there are at least $c^{-1} N^{\gamma(d-1)}$ points $w\in\partial \Delta$ such that
	\begin{equation}\label{eq:rho-lower-bound}
		\rho^\circ_{(x,y)}(z,w) \geq cN^{-\gamma(d-1)} \, .
	\end{equation}
	
	As announced at the beginning of this section, we use these bounds to control the variances.
	
	\begin{proposition}\label{prop:variance-1}
		There is some constant $c>0$ such that uniformly in $x \in \partial B$ and $y \in \partial \Delta$, we have
		\begin{equation*}
		c^{-1} N^{-(\gamma+1)(d-1)} \leq \mathrm{Var}_{\tilde{\pi}_\circ}(\rho_{(x,y)}^\circ) \leq c N^{-(\gamma+1)(d-1)} \, .
		\end{equation*}
	\end{proposition}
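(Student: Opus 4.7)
The plan is to compute $\mathrm{Var}_{\tilde{\pi}}(\rho^\circ_{(x,y)})$ via its second moment, exploiting the fact that almost all of the structural input is already established in~\eqref{eq:rho-upper-bound} and~\eqref{eq:rho-lower-bound}. The key reduction is to observe that for either $\circ = Y$ or $\circ = Z$, the function $\rho^\circ_{(x,y)}(z,w)$ depends only on $w$ and $x$ (not on $z$ or $y$), as can be seen directly from the expressions for $p^Y,p^Z$ in~\eqref{eq:noyau-Y}-\eqref{eq:noyau-Z} divided by $\mu(x,y)=\mathbf{P}^N_x(X_{H_\Delta}=y)$. Hence, writing $\rho^\circ_{(x,y)}(w)$ for short, one has
\[
\mathbb{E}_{\tilde\pi}\bigl[(\rho^\circ_{(x,y)})^2\bigr]
=\sum_{w\in\partial\Delta}\tilde\pi(\partial B\times\{w\})\,\rho^\circ_{(x,y)}(w)^2,
\qquad
\mathbb{E}_{\tilde\pi}[\rho^\circ_{(x,y)}]=\frac{\tilde\pi(x,y)}{\mu(x,y)}=\tilde e_\Delta^{\phi_N}(x),
\]
where the second identity uses the invariance of $\tilde\pi$. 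By Lemma~\ref{lem:encadrement-sortie-Delta}, $\tilde\pi(\partial B\times\{w\})\asymp N^{1-d}$ uniformly in $w$, and by Lemma~\ref{lem:encadrement-mes-harmonique}, $\tilde e_\Delta^{\phi_N}(x)\asymp N^{1-d}$. In particular $(\mathbb{E}_{\tilde\pi}[\rho^\circ_{(x,y)}])^2\le CN^{-2(d-1)}$, which (since $\gamma<1$) is $o(N^{-(\gamma+1)(d-1)})$; so the problem reduces to showing that the second moment is of order $N^{-(\gamma+1)(d-1)}$.

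For the upper bound, the straightforward inequality
\[
\mathrm{Var}_{\tilde\pi}(\rho^\circ_{(x,y)})\le\mathbb{E}_{\tilde\pi}\bigl[(\rho^\circ_{(x,y)})^2\bigr]\le \|\rho^\circ_{(x,y)}\|_\infty\cdot\mathbb{E}_{\tilde\pi}[\rho^\circ_{(x,y)}]
\]
combined with the uniform bound $\|\rho^\circ_{(x,y)}\|_\infty\le cN^{-\gamma(d-1)}$ from~\eqref{eq:rho-upper-bound} and $\mathbb{E}_{\tilde\pi}[\rho^\circ_{(x,y)}]\le CN^{1-d}$ immediately yields $\mathrm{Var}_{\tilde\pi}(\rho^\circ_{(x,y)})\le cN^{-(\gamma+1)(d-1)}$, as required.

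For the matching lower bound, one uses the pointwise bound~\eqref{eq:rho-lower-bound}: for every $x\in\partial B$ there is a set $S_x\subset\partial\Delta$ with $|S_x|\ge c^{-1}N^{\gamma(d-1)}$ on which $\rho^\circ_{(x,y)}(w)\ge cN^{-\gamma(d-1)}$. Restricting the sum to $S_x$ gives
\[
\mathbb{E}_{\tilde\pi}\bigl[(\rho^\circ_{(x,y)})^2\bigr]\ge\sum_{w\in S_x}\tilde\pi(\partial B\times\{w\})\,\rho^\circ_{(x,y)}(w)^2\ge c^{-1}N^{\gamma(d-1)}\cdot cN^{1-d}\cdot c^2N^{-2\gamma(d-1)}=c'\,N^{-(\gamma+1)(d-1)}.
\]
Subtracting the square of the mean, which is $O(N^{-2(d-1)})=o(N^{-(\gamma+1)(d-1)})$ as noted, yields $\mathrm{Var}_{\tilde\pi}(\rho^\circ_{(x,y)})\ge c''N^{-(\gamma+1)(d-1)}$ for all $N$ large enough, uniformly in $(x,y)$. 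There is no genuine obstacle here — all of the work sits in~\eqref{eq:rho-upper-bound}-\eqref{eq:rho-lower-bound} and in the estimates on $\tilde\pi$; the only arithmetic subtlety to keep in mind is that the target exponent $(\gamma+1)(d-1)$ is strictly between $\gamma(d-1)$ (order of $\|\rho^\circ\|_\infty$, set against the mass $N^{1-d}$) and $2(d-1)$ (the squared mean), which is exactly why one cannot drop any of the three inputs.
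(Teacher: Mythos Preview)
Your proof is correct and follows essentially the same approach as the paper's: both arguments bound the second moment from above via $\|\rho^\circ\|_\infty\cdot\mathbb{E}_{\tilde\pi}[\rho^\circ]$ with $\mathbb{E}_{\tilde\pi}[\rho^\circ]=\tilde e_\Delta^{\phi_N}(x)$ from invariance, and from below by restricting to the set $S_x$ from~\eqref{eq:rho-lower-bound} and using Lemma~\ref{lem:encadrement-sortie-Delta}, then subtract the negligible squared mean. Your explicit remark that $\rho^\circ_{(x,y)}(z,w)$ depends only on $(w,x)$ is a useful clarification that the paper only makes in passing.
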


	\begin{proof}
		According to \eqref{eq:erreur-mes-invariante}, we only need to prove the lemma for $\tilde{\pi} = \tilde{\pi}^Y$. Recall that $\Sigma = \partial B \times \partial \Delta$. In the proof, we write for simplicity
		\[
		\tilde{\pi}\big[\rho_{(x,y)}^\circ \big] \defeq \sum_{(z,w) \in \Sigma}  \tilde{\pi}(z,w)  \rho^\circ_{(x,y)}(z,w)\,,\qquad \tilde{\pi}\Big[ \big(\rho_{(x,y)}^{\circ}\big)^2 \Big] \defeq	\sum_{(z,w) \in \Sigma} \tilde{\pi}(z,w) \rho^\circ_{(x,y)}(z,w)^2 \,.
		\]
For the upper bound, we simply write	
		\[ \mathrm{Var}_{\tilde{\pi}}(\rho_{(x,y)}^\circ) \leq \tilde{\pi}\Big[ \big(\rho_{(x,y)}^{\circ}\big)^2 \Big]  \leq  cN^{-\gamma(d-1)}\tilde{\pi}(\rho_{(x,y)}^\circ),  \]
where we have used~\eqref{eq:rho-upper-bound} for the second inequality.
Then, notice that, by definition of $\rho^{\circ}$ and since $\tilde{\pi}$ is the invariant measure of $Y$, 
		\begin{equation}\label{eq:integral-rho-sur-pi}
		\tilde{\pi}\big(\rho_{(x,y)}^\circ\big) = \frac{1}{\Pbf^N_x(X_{H_\Delta} = y)} \sum_{(z,w)} p^\circ((z,w),(x,y)) \tilde{\pi}(z,w) = \frac{\tilde{\pi}(x,y)}{\Pbf^N_x(X_{H_\Delta} = y)} = \tilde{e}_\Delta^{\phi_N}(x) \, ,
		\end{equation}
where the last identity comes from Lemma~\ref{lem:mes-inv-Y}.
Then, we can use Lemma \ref{lem:encadrement-mes-harmonique} to obtain that $\tilde{\pi}(\rho_{(x,y)}^\circ) =\tilde{e}_\Delta^{\phi_N}(x) \leq c N^{-(d-1)}$, which concludes the proof of the upper bound.
 
		
		To get a lower bound on the second moment, we write
		\begin{equation*}\label{eq:moment-2-lower-bound-1}
	\tilde{\pi}\Big[ \big(\rho_{(x,y)}^{\circ}\big)^2 \Big]  \geq c N^{-2\gamma(d-1)} \tilde{\pi}\Big(\mathset{(z,w) \in \Sigma \, : \, \rho^\circ_{(x,y)}(z,w) \geq c N^{-\gamma(d-1)}} \Big) \, .
		\end{equation*}
		Using Lemma \ref{lem:encadrement-sortie-Delta} and \eqref{eq:rho-lower-bound}, we get (note that $\rho^\circ_{(x,y)}(z,w)$ does not depend on $z$)
		\begin{equation*}
		\tilde{\pi}\Big(\mathset{(z,w) \in \Sigma \, : \, \rho^\circ_{(x,y)}(z,w) \geq c N^{-\gamma(d-1)}} \Big) \geq c N^{1-d} c' N^{\gamma(d-1)}\, ,
		\end{equation*}
	so that 
		\[
		\tilde{\pi}\Big[ \big(\rho_{(x,y)}^{\circ}\big)^2 \Big]  \geq c   N^{- (1+\gamma)(d-1)}\,.
		\]
On the other hand, using again \eqref{eq:integral-rho-sur-pi} and Lemma \ref{lem:encadrement-mes-harmonique}, we get $\tilde{\pi}\big(\rho_{(x,y)}^\circ\big)^2 \leq c N^{-2(d-1)} = \bar{o}(N^{- (1+\gamma)(d-1)})$.
All together, we end up with
		\[ \mathrm{Var}_{\tilde{\pi}}(\rho_{(x,y)}^\circ) = \tilde{\pi}\Big[ \big(\rho_{(x,y)}^{\circ}\big)^2 \Big]  -  \tilde{\pi}\big[\rho_{(x,y)}^\circ\big]^2  \geq (1-\bar{o}(1)) c N^{- (1+\gamma)(d-1)}   \, , \]
which concludes the proof of the lower bound.
	\end{proof}

	\section{Coupling between the confined RW and tilted RI}\label{ssec:couplage-1}
	
	In this section, we prove Theorem \ref{th:couplage-CRW-entrelac-t_N} in three steps. First we control the number of excursions, \textit{i.e.}\ we estimate how many times the confined RW enters $B$ and leaves through $\Delta$ before time $t_N$, and similarly for the tilted RI $\mathscr{I}_{\Psi_N}(u_N)$. Then, we use Theorem \ref{th:soft-local-times} to couple the Markov chains~$Y$ and~$Z$ defined in Section~\ref{ssec:defYZ}. Finally, we detail how to couple the actual excursions in~$B$ to get Theorem \ref{th:couplage-CRW-entrelac-t_N}.
	
	Let us first recall some notation. We fix a $\delta > 0$ and we consider a sequence of times $(t_N)_{N \geq 1}$ that satisfies $t_N / N^{2+\delta} \to +\infty$. We also consider a vanishing sequence $(\eps_N)_{N \geq 1}$ defined by $\eps_N = N^{-\delta/4}$.
	Recall the definitions~\eqref{eq:def-instants} of the return times $R_i$ for the $\phi_N$-walk, as well as~\eqref{eq:def-instants-entrelac} for the corresponding ``returns'' $R_i^j$ to $B$ for the interlacements.

	\subsection{Number of excursions}\label{ssec:nb-excursions}

	\subsubsection{For the confined RW}	
	Write $\mathcal{N}(t) \defeq \sup \mathset{i \geq 1 \, : \, R_i < t}$ for the number of excursions in $B$ before time $t$ made by the random walk under $\Pbf^N_{\phi_N^2}$.
	
	\begin{proposition}\label{prop:nb-excursions-tN}
		There exist positive constants $c, c', C$ depending only on $d,\gamma$ such that for every $N \geq 1$.
		\begin{equation}\label{prop:nb-excursions-tN:eq}
			\Pbf^N_{\phi_N^2} \left(\left|\mathcal{N}(t_N) - t_N \frac{\cpc_\Delta^{\phi_N}(B)}{\| \phi_N\|_2^2} \right| > \eps_N t_N \frac{\cpc_\Delta^{\phi_N}(B)}{\| \phi_N\|_2^2} \right) \leq C \exp (- c' N^c) \, .
		\end{equation}
	\end{proposition}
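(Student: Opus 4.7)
The plan is to recast the bound on $\mathcal{N}(t_N)$ as a concentration statement on $R_k$ via the duality $\{\mathcal{N}(t_N) \geq k\} = \{R_k < t_N\}$. The last-exit decomposition of Section~\ref{ssec:mes-invariante-1}, specifically identity~\eqref{eq:proba-retour-temps-site}, gives $\Pbf^N_{\phi_N^2}(n \in \mathcal{R}) = \cpc_\Delta^{\phi_N}(B)/\|\phi_N\|_2^2$, so that $\mathcal{N}(t_N) = \sum_{n=1}^{t_N} \mathbbm{1}_{\{n \in \mathcal{R}\}}$ has exact mean $t_N \cpc_\Delta^{\phi_N}(B)/\|\phi_N\|_2^2$ under $\Pbf^N_{\phi_N^2}$, and Kac's lemma identifies the mean cycle length under the invariant measure $\tilde{\pi}$ as $m_N := \|\phi_N\|_2^2/\cpc_\Delta^{\phi_N}(B) \asymp N^{1+\gamma}$, combining Lemmas~\ref{lem:norme-phiN2} and~\ref{lem:encadrement-mes-harmonique}. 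It therefore suffices to show $\Pbf^N_{\phi_N^2}\bigl(|R_k - k m_N| > \tfrac12 \eps_N k m_N\bigr) \leq C\exp(-c' N^c)$ for $k$ in a window of size $\eps_N t_N/m_N$ around $t_N/m_N$.

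The key decomposition is
\begin{equation*}
R_k - k m_N \;=\; \sum_{i=1}^{k} \bigl(\xi_i - g(Y_i,Y_{i+1})\bigr) \;+\; \sum_{i=1}^{k}\bigl(g(Y_i,Y_{i+1}) - m_N\bigr) \;=:\; M_k + F_k,
\end{equation*}
where $\xi_i := R_{i+1} - R_i$ and $g(y,y') := \Ebf^N[\xi_i \mid Y_i = y,\ Y_{i+1} = y']$ is well-defined and independent of $i$ by stationarity of $Y$. Conditionally on $\sigma(Y)$, the excursions $(\xi_i)_i$ are independent by the strong Markov property, so $M_k$ is a sum of conditionally independent centred random variables, while $F_k$ is an additive functional of the pair chain $(Y_i, Y_{i+1})$, which inherits the mixing time $O(N^{1-\gamma})$ of $Y$ from Proposition~\ref{prop:temps-mixage}.

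For $M_k$, the key input is a sub-exponential tail on each cycle length: writing $\xi_i = (D_i - R_i) + (R_{i+1} - D_i)$, each of the two hitting times is bounded using~\eqref{eq:encadrement-proba} together with Proposition~\ref{prop:rester-ds-boule-anneau} to give $\Pbf^N_z(\xi_i > s) \leq C \rme^{-c s/N^2}$ uniformly in the starting point~$z$. Truncating each $\xi_i$ at $C N^2 \log N$ (the discarded mass is $\rme^{-c \log N} = N^{-c}$, absorbed in a union bound over $k = O(N^C)$ excursions) yields $\|\xi_i\|_\infty \leq C N^2 \log N$ together with the variance bound $\mathrm{Var}(\xi_i \mid Y_i, Y_{i+1}) \leq C N^{2(1+\gamma)}$; the latter follows from the ``geometric-many-attempts'' structure of a typical excursion, the walk visiting $\partial B$ about $N^\gamma$ times before escaping to $\Delta$, each inter-visit taking a time of order $N$. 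A conditional Bernstein inequality then gives $\Pbf(|M_k| > \tfrac12 \eps_N k m_N) \leq \exp(-c N^c)$ with an exponent of order $\eps_N^2 k \asymp N^{1+\delta'-\gamma}$, a positive power of $N$ since $\gamma < 1$.

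For $F_k$, I would apply a Markov-chain Bernstein-type inequality to the pair chain $(Y_i, Y_{i+1})$ using the bounds $\|g\|_\infty \leq C N^2 \log N$, $\mathrm{Var}_{\tilde{\pi}}(g) \leq \mathrm{Var}(\xi_i) \leq C N^{2(1+\gamma)}$, and the mixing time $O(N^{1-\gamma})$; the same kind of computation gives an exponent that is again a positive power of~$N$. Combining both tail bounds via a union bound and translating back from $\{R_k < t_N\}$ to $\{\mathcal{N}(t_N) \geq k\}$ concludes the proof. The main obstacle is that the $\xi_i$ are not uniformly bounded, which is handled by the truncation at scale $N^2 \log N$ discussed above. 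A subsidiary technical point is that under $\Pbf^N_{\phi_N^2}$, $Y_1$ is sampled from $\nu_Y$ (see~\eqref{def:nuY}) rather than from $\tilde{\pi}$; by the mixing-time bound this introduces only an $O(T^Y_{\mathrm{mix}}) = O(N^{1-\gamma})$ shift in both $M_k$ and $F_k$, which is $\bar{o}(\eps_N k m_N)$ and is absorbed in the error term.
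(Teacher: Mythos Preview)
Your overall strategy is sound and is a genuinely different organization from the paper's proof. The paper does not split $R_k - k m_N$ into a conditionally-centred martingale $M_k$ plus an additive functional $F_k$ of the chain $Y$; instead it groups the cycles into blocks of length $\ell = N^{\varsigma}T^Y_{\mathrm{mix}}$, uses mixing to make the block sums $W_j=R_{j\ell}-R_{(j-1)\ell}$ nearly independent given $\mathscr{G}_{j-2}$, truncates each $W_j$ at $\ell N^{2+a}$, and applies Azuma--Hoeffding to the even and odd subsequences separately (see~\eqref{eq:separation-1-pairs-impairs}--\eqref{eq:azuma-1ere-apply}). Your martingale/functional split is conceptually cleaner and avoids the blocking device; the paper's approach is more elementary in that it never needs a variance estimate, only the tail bound of Claim~\ref{claim:proba-tps-retour-grand}.

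There is, however, a real gap in your tail bound. You write that $\Pbf^N_z(\xi_i>s)\le C\rme^{-cs/N^2}$ follows from~\eqref{eq:encadrement-proba} together with Proposition~\ref{prop:rester-ds-boule-anneau}. For the piece $D_i-R_i$ this is fine: the walk is trapped in $\Delta^c\subset B^\eps$, so~\eqref{eq:encadrement-proba} applies and $\Delta^c$ is a ball. But for $R_{i+1}-D_i$ the walk, started at $\partial\Delta$, may leave $B^\eps$ and wander through all of $D_N\setminus B$; that set is not contained in $B^\eps$ (so the ratio $\phi_N(X_\tau)/\phi_N(x)$ in~\eqref{eq:encadrement-proba} is \emph{not} uniformly bounded) and it is neither a ball nor an annulus (so Proposition~\ref{prop:rester-ds-boule-anneau} says nothing). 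The paper establishes precisely this tail bound in Claim~\ref{claim:proba-tps-retour-grand}, and its proof (see~\eqref{eq:proba-eviter-B-depuis-Delta}--\eqref{eq:proba-rester-bcp-dans-Delta}) hinges on an eigenvalue comparison: Lemma~\ref{lem:vp-retire-boule} guarantees that the first Dirichlet eigenvalue of $D\setminus B(\tfrac{x_0}{N},\alpha)$ strictly exceeds that of $D$, which via~\eqref{eq:vp-blowup} yields $\lambda(D_N\setminus B)/\lambda_N\le\rme^{-\eta_B/N^2}$ and hence the desired decay. Without this input your argument stalls.

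A smaller point: your variance bound $\mathrm{Var}(\xi_i\mid Y_i,Y_{i+1})\le CN^{2(1+\gamma)}$ and its heuristic (``$N^\gamma$ visits, each of time $N$'') are not justified and probably too optimistic; the possibility that from $\partial\Delta$ the walk escapes to $\partial B^\eps$ (probability $\asymp N^{\gamma-1}$) and then takes time $\asymp N^2$ to return already contributes order $N^{3+\gamma}$ to the variance. Fortunately this does not break your argument: once the correct tail bound at scale $N^2$ is in hand, even the crude moment bound $\mathrm{Var}(\xi_i)\le CN^4$ leaves a Bernstein exponent $\asymp \eps_N^2\,t_N\,N^{\gamma-3}=N^{\delta'+\gamma-1}$, which is a positive power of $N$ under the constraint $1-\gamma<\delta'/(2(d-1))$.
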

	
	Let us first give an expression for the expectation of $\mathcal{N}(t)$.
	
	\begin{lemma}\label{lem:esp-nb-retour-walk}
		For any $t \geq 1$,
		\begin{equation}
			\Ebf_{\phi_N^2}^{N} \left[ \mathcal{N}(t) \right] = t \, \frac{\cpc_\Delta^{\phi_N}(B)}{\| \phi_N\|_2^2} \, .
		\end{equation}
	\end{lemma}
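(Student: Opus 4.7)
The plan is to write $\mathcal{N}(t)$ as a sum of indicators of the events $\{n \in \mathcal{R}\}$ and apply linearity of expectation, so that the whole identity reduces to computing $\Pbf^N_{\phi_N^2}(n \in \mathcal{R})$ for each $n$. The key input is simply that, under $\Pbf^N_{\phi_N^2}$, the chain $(X_n)$ is stationary with reversible measure $\phi_N^2/\|\phi_N\|_2^2$, so the computation carried out in the proof of Lemma~\ref{lem:mes-inv-Y} can be recycled almost verbatim.

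Concretely, I would first write
\[ \mathcal{N}(t) = \sum_{n=1}^{t} \mathbbm{1}_{\{n \in \mathcal{R}\}} \, , \]
so that by Fubini
\[ \Ebf^N_{\phi_N^2}\big[\mathcal{N}(t)\big] = \sum_{n=1}^{t} \Pbf^N_{\phi_N^2}(n \in \mathcal{R}) = \sum_{n=1}^t \sum_{x \in \partial B} \Pbf^N_{\phi_N^2}(n \in \mathcal{R}, X_n = x) \, . \]
For each summand, I would apply the last-passage decomposition on the most recent visit to $\partial \Delta$ before time $n$, exactly as in \eqref{eq:mes-inv-Y:retour-decomp}--\eqref{eq:proba-retour-temps-site}, and use stationarity of $\phi_N^2$ under the tilted RW together with reversibility to reverse time. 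This yields the $n$-independent identity
\[ \Pbf^N_{\phi_N^2}(n \in \mathcal{R}, X_n = x) = \frac{\phi_N^2(x)}{\|\phi_N\|_2^2}\, \Pbf^N_x(H_\Delta < \bar{H}_B) \, . \]
Summing over $x \in \partial B$ and invoking the definition~\eqref{eq:def-mes-harm-capacite-phi} of $\cpc_\Delta^{\phi_N}(B)$ gives $\Pbf^N_{\phi_N^2}(n \in \mathcal{R}) = \cpc_\Delta^{\phi_N}(B)/\|\phi_N\|_2^2$, and summing over $n=1,\dots,t$ yields the claim.

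The only real subtlety is that the last-passage decomposition requires the walk to have had at least one visit to $\partial \Delta$ strictly before time $n$, which is not automatic for very small $n$ under the one-sided law $\Pbf^N_{\phi_N^2}$. The standard fix, which I would quote at the start of the proof, is to extend $(X_n)_{n\geq 0}$ under $\Pbf^N_{\phi_N^2}$ to a bi-infinite stationary process $(X_n)_{n\in \ZZ}$ via Kolmogorov extension (since $\phi_N^2/\|\phi_N\|_2^2$ is reversible); on this extension the decomposition is legitimate for every $n \in \ZZ$, and the distribution of $\mathcal{N}(t)$ is unchanged. Once this is in place the identity is simply counting, with no further estimates needed.
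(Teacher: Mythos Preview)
Your proposal is correct and follows essentially the same approach as the paper: write $\mathcal{N}(t)=\sum_{n}\mathbbm{1}_{\{n\in\mathcal{R}\}}$, invoke the identity \eqref{eq:proba-retour-temps-site} (derived via the same last-passage decomposition and bi-infinite stationary extension you describe), and sum over $x\in\partial B$ using the definition~\eqref{eq:def-mes-harm-capacite-phi}. The paper simply quotes \eqref{eq:proba-retour-temps-site} rather than rederiving it, but the content is identical.
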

	
	\begin{proof}
		This follows from \eqref{eq:proba-retour-temps-site} and the definition of $\cpc_\Delta^{\phi_N}(B)$ in~\eqref{eq:def-mes-harm-capacite-phi}: we have that
		\[
		\Ebf_{\phi_N^2}^{N}[\mathcal{N}(t)] = \sum_{k = 1}^t \Pbf^N_{\phi_N^2} (k \in \mathcal{R}) = \sum_{k = 1}^t \sum_{x \in \partial B} \frac{\phi_N^2(x)}{\| \phi_N\|_2^2} \Pbf^N_x(\bar{H}_B > H_\Delta) = t \frac{\cpc_\Delta^{\phi_N}(B)}{\| \phi_N\|_2^2} \, . \qedhere
		\]
	\end{proof}

	\begin{proof}[Proof of Proposition \ref{prop:nb-excursions-tN}]
		Note that for any $t > 0$ and $b > 0$
		\begin{equation*}
			\mathset{ \big| \mathcal{N}(t) - \Ebf_{\phi_N^2}^{N}\left[ \mathcal{N}(t) \right] \big| > b} \subseteq \Big\{ R_{\lceil \Ebf_{\phi_N^2}^{N}\left[ \mathcal{N}(t) \right] - b \rceil}  >t \Big\} \cup \Big\{ R_{\lfloor \Ebf_{\phi_N^2}^{N}\left[ \mathcal{N}(t) \right] + b \rfloor} < t \Big\} \, .
		\end{equation*}
		In particular, if we write $k_\pm = \lceil (1 \pm \eps_N) t_N \| \phi_N\|_2^{-2} \cpc_\Delta^{\phi_N}(B) \rceil$, we have the upper bound
		\begin{equation}\label{eq:nb-traj-RW-Rk}
			\Pbf^N_{\phi_N^2} \left(\left|\mathcal{N}(t_N) - t_N \frac{\cpc_\Delta^{\phi_N}(B)}{\| \phi_N\|_2^2} \right| >  \eps_N t_N \frac{\cpc_\Delta^{\phi_N}(B)}{\| \phi_N\|_2^2} \right) \leq \Pbf^N_{\phi_N^2} (R_{k_-} > t_N) + \Pbf^N_{\phi_N^2} (R_{k_+} < t_N) \, .
		\end{equation}
		
		In order to get a bound on \eqref{eq:nb-traj-RW-Rk}, we  control the deviations of $R_{k_\pm}$. Indeed, we claim that $\Ebf^N_{\phi_N^2}[R_{k_\pm}] \approx (1 \pm \eps_N) t_N$: we postpone its proof until the end of the section.
		\begin{claim}\label{claim:esp-temps-k-retour}
			There is a constant $c>0$ such that, for $N$ large enough,
			\[
			\Ebf^N_{\phi_N^2}[R_{k_{\pm}}] = k_{\pm} \frac{\| \phi_N\|_2^2 } {\cpc_\Delta^{\phi_N}(B)} + \grdO(N^{-c}) \eps_N t_N \,.
			\]
			We also have the following convergences:
			\begin{equation}\label{eq:esp-m-ell-retour}
				\lim_{N \to +\infty} \frac{\Ebf^N_{\phi_N^2}[R_{k_\pm}]}{(1 \pm \eps_N) t_N} = \lim_{N \to +\infty}  \frac{\cpc_\Delta^{\phi_N}(B)}{\| \phi_N\|_2^2 k_\pm} \Ebf^N_{\phi_N^2}[R_{k_\pm}]  = 1 \, .
			\end{equation}
		\end{claim}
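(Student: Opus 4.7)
My plan is to decompose $R_{k_\pm}$ as a sum of one-step excursion durations and exploit the fast mixing of the Markov chain $Y$. Set $f(x,y) \defeq \Ebf^N_{(x,y)}[R_2 - R_1]$, so that $\Ebf^N_{\phi_N^2}[R_{i+1} - R_i \mid Y_i] = f(Y_i)$ by the strong Markov property, and hence
\[
\Ebf^N_{\phi_N^2}[R_{k_\pm}] \,=\, \Ebf^N_{\phi_N^2}[R_1] + \sum_{i=1}^{k_\pm - 1} \Ebf^N_{\phi_N^2}\bigl[f(Y_i)\bigr] \,.
\]
The strategy is to compare each summand with $\tilde{\pi}(f) \defeq \sum_{(x,y)} \tilde{\pi}(x,y) f(x,y)$, and to identify $\tilde{\pi}(f) = \rho^{-1}$, where $\rho \defeq \cpc_\Delta^{\phi_N}(B)/\| \phi_N \|_2^2$.

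The identity $\tilde{\pi}(f) = \rho^{-1}$ is a Kac/cycle formula which I would establish by combining two ergodic theorems. Since $Y$ is irreducible with invariant measure $\tilde{\pi}$ (Lemmas~\ref{lem:mes-inv-Y}--\ref{lem:mes-inv-Z}), Birkhoff applied to $Y$ started from $\tilde{\pi}$ gives $R_k/k \to \tilde{\pi}(f)$ almost surely. On the other hand, the tilted walk is ergodic under $\Pbf^N_{\phi_N^2}$, so Lemma \ref{lem:esp-nb-retour-walk} combined with the ergodic theorem yields $\mathcal{N}(t)/t \to \rho$ almost surely. Since $\mathcal{N}(R_k) = k$ by definition, letting $t = R_k \to \infty$ forces $\tilde{\pi}(f) = \rho^{-1}$.

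To bound the error, I would use submultiplicativity of total variation along~$Y$: by definition of mixing time, $\| \Pbf^N_{\phi_N^2}(Y_i \in \cdot) - \tilde{\pi} \|_{TV} \leq 2^{-\lfloor i/T^Y_{\mix}\rfloor}$, whence $|\Ebf^N_{\phi_N^2}[f(Y_i)] - \tilde{\pi}(f)| \leq 2 \| f \|_\infty \, 2^{-\lfloor i/T^Y_{\mix}\rfloor}$. Summing the geometric series yields a total error of order $T^Y_{\mix} \, \| f \|_\infty$. Proposition~\ref{prop:rester-ds-boule-anneau} gives $\| f \|_\infty = \grdO(N^2)$ via the standard bound on hitting times in a domain of diameter $\lesssim N$, and Proposition~\ref{prop:temps-mixage} gives $T^Y_{\mix} = \grdO(N^{1-\gamma})$. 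The initial term $\Ebf^N_{\phi_N^2}[R_1]$ is also $\grdO(N^2)$, controlled by the same ingredient. Altogether, $\Ebf^N_{\phi_N^2}[R_{k_\pm}] = k_\pm/\rho + \grdO(N^{3-\gamma})$.

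It then remains to check that $N^{3-\gamma} = \grdO(N^{-c}) \eps_N t_N$ for some $c>0$. From $t_N \geq N^{2+\delta}$ and $\eps_N^2 = N^{2+\delta'}/t_N$ one gets $\eps_N t_N = N^{1+\delta'/2} t_N^{1/2} \geq N^{2 + (\delta+\delta')/2}$, while the choice $\gamma > 1 - \delta'/(2(d-1))$ guarantees $3-\gamma < 2 + (\delta+\delta')/2$ with a polynomial margin for $d \geq 3$. The limits \eqref{eq:esp-m-ell-retour} follow immediately by substituting $k_\pm = \lceil (1\pm\eps_N) t_N \rho \rceil$ and using $\eps_N \to 0$. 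The main obstacle I anticipate is the identification $\tilde{\pi}(f) = \rho^{-1}$: the sequence $(R_{i+1}-R_i)_{i\geq 1}$ is not i.i.d.\ (its terms are correlated through $Y$), so elementary renewal theory does not apply, and one must carefully intertwine the two ergodic theorems above — that of the tilted walk and that of the induced chain $Y$ — to extract the cycle formula.
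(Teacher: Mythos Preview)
Your approach is essentially the paper's: both use fast mixing of $Y$ to reduce to the stationary cycle length, then identify that length as $\|\phi_N\|_2^2/\cpc_\Delta^{\phi_N}(B)$ via a renewal/Kac argument combined with Lemma~\ref{lem:esp-nb-retour-walk}. The paper groups excursions into blocks of size $\ell = N^\varsigma T^Y_{\mix}$, while you sum the geometric mixing error step by step; these yield the same order of error. The ``obstacle'' you anticipate is resolved in the paper by the Markov renewal theorem (applied to $(X_{R_i},R_i)_{i\ge1}$, which is a genuine Markov renewal process since, given $X_{R_i}$, the pair $(R_{i+1}-R_i,X_{R_{i+1}})$ is independent of the past), so no delicate intertwining of ergodic theorems is needed.

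There is, however, a real gap in your control of $\|f\|_\infty$ and $\Ebf^N_{\phi_N^2}[R_1]$. First, Proposition~\ref{prop:rester-ds-boule-anneau} concerns the \emph{simple} random walk, whereas $f$ and $R_1$ are defined under $\Pbf^N$; passing from one to the other brings in the factor $\lambda_N^{-t}$ and requires a spectral comparison (this is exactly the content of Claim~\ref{claim:proba-tps-retour-grand}), and the paper only extracts $\grdO(N^{2+r})$ for arbitrarily small $r>0$, not $\grdO(N^2)$. Second, and more subtly, your $f(x,y)$ contains the conditioned expectation $\Ebf^N_x[H_\Delta \mid X_{H_\Delta}=y]$, and since $x\in\partial B$ sits at distance only $N^\gamma$ from $\partial\Delta$, the Poisson-kernel ratio $\Pbf^N_x(X_{H_\Delta}=y)$ is highly non-uniform in $y$; bounding the conditioned hitting time uniformly therefore needs a Harnack-type argument that you have not supplied. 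The paper sidesteps this entirely by conditioning only on $X_{R_i}$ (the marginal chain on $\partial B$) and working with the unconditioned $\Ebf^N_x[R_1]$. Either switch to that marginal formulation, or justify the uniform bound on the bridge expectation; with $\grdO(N^{2+r})$ in place of $\grdO(N^2)$ your final arithmetic still closes, since $r$ can be chosen small relative to the margin $(\delta+\delta')/2-(1-\gamma)>0$.
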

		
		As a consequence of Claim \ref{claim:esp-temps-k-retour}, we have that $\Ebf^N_{\phi_N^2}[R_{k_{-}}]  \leq t_N - \frac12 \eps_N t_N$ and $\Ebf^N_{\phi_N^2}[R_{k_{+}}]  \geq t_N + \frac12 \eps_N t_N$ for $N$ large enough.
		Using \eqref{eq:nb-traj-RW-Rk} to prove Proposition~\ref{prop:nb-excursions-tN}, we need to show that 
		\begin{equation}
			\label{prop:nb-excursions-tN:eq2}
			\Pbf^N_{\phi_N^2} \Big( \big| R_{k_\pm} - \Ebf^N_{\phi_N^2}[R_{k_\pm}] \big| > \tfrac12 \eps_N \Ebf^N_{\phi_N^2}[R_{k_\pm}] \Big) \leq C \exp(- c'N^{c''}) \,.
		\end{equation}
		
		
		The way of controlling these deviations is through the Azuma inequality, see e.g.\ \cite[Ch.~1.1]{inegalite-concentration}. We write $R_{k_\pm}$ as a sum of time increments that are roughly i.i.d.	Let $\varsigma > 0$ and recall the definition \eqref{eq:def-temps-melange} of the mixing time $T^Y_{\mix}$. Set $\ell = N^\varsigma T^Y_{\mix}$ and $m_\pm = k_\pm \ell^{-1}$, and define $\mathscr{G}_i$ the sigma-algebra associated to $R_{i\ell}$. With the strong Markov property and mixing properties (see \cite[Section 4.5]{levin2017markov}), we have the bound
		\begin{equation}
			\| \Pbf^N_{\phi_N^2} \left((X_{R_{i\ell}}, X_{D_{i\ell}}) \in \cdot \, | \, \mathscr{G}_{i-1}\right) - \tilde{\pi}_Y(\cdot) \|_{\mathrm{TV}} \leq 2^{-N^\varsigma} \, .
		\end{equation}
		Summing over the value of $X_{D_{i\ell}}$ and since $\tilde{\pi}_Y(\mathset{x} \times \partial \Delta) = \tilde{e}_\Delta^{\phi_N}(x) \asymp N^{1-d}$ uniformly in $x \in \partial B$ (recall Lemma \ref{lem:encadrement-mes-harmonique}), we obtain
		\begin{equation}\label{eq:loi-retour-sachant-Gj}
			\sup_{x\in\partial B} \left| 1 - \frac{\Pbf^N_{\phi_N^2} (X_{R_{i\ell}} = x \, | \, \mathscr{G}_{i-1})}{\tilde{e}_\Delta^{\phi_N}(x)} \right| \leq c 2^{-N^{\varsigma/2}} \, .
		\end{equation}

		This tells us that after $\ell$ returns to $B$, the starting points of the next sequence of $\ell$ returns has a distribution very close to $\tilde{e}_\Delta^{\phi_N}$. This helps controlling the time spent between $R_{(i-1)\ell}$ and $R_{i\ell}$, which is useful to apply the Azuma--Hoeffding inequality.
		Write $R_{m\ell} = \sum_{j = 1}^m W_j$ with $W_j = R_{j\ell} - R_{(j-1)\ell}$ and $R_0 = 0$. Then, writing
		\[
		\Pbf^N_{\phi_N^2} (W_j > t \, | \, \mathscr{G}_{j-2}) = \sum_{x \in \partial B} \Pbf^N_x (R_{\ell} > t) \Pbf^N_{\phi_N^2} (X_{R_{(j-1)\ell}}  = x \, | \, \mathscr{G}_{j-2})
		\]
		and using \eqref{eq:loi-retour-sachant-Gj}, we have
		\begin{equation}\label{eq:proba-cond-Wj-geq-t}
			\big| \Pbf^N_{\phi_N^2} (W_j > t \, | \, \mathscr{G}_{j-2}) - \Pbf^N_{\tilde{e}_\Delta^{\phi_N}} (R_{\ell} > t) \big|  \leq c2^{-N^{\varsigma/2}}\Pbf^N_{\tilde{e}_\Delta^{\phi_N}} (R_{\ell} > t)  \,.
		\end{equation}
		
		Note that using~\eqref{eq:proba-cond-Wj-geq-t} and also $\Ebf^N_{\phi_N^2} [W_j] = \Ebf^N_{\phi_N^2} [\Ebf^N_{\phi_N^2} [W_j \, | \, \mathscr{G}_{j-2}]]$, we have
		\begin{equation}\label{eq:controle-esp-Zj}
			\left| \Ebf^N_{\phi_N^2} [W_j \, | \, \mathscr{G}_{j-2}] - \Ebf^N_{\tilde{e}_\Delta^{\phi_N}} [R_{\ell}] \right| \,,\, \left| \Ebf^N_{\phi_N^2} [W_j] - \Ebf^N_{\tilde{e}_\Delta^{\phi_N}} [R_{\ell}] \right|  \leq c 2^{-N^{\varsigma/2}} \Ebf^N_{\tilde{e}_\Delta^{\phi_N}} [R_{\ell}] \, .
		\end{equation}	
		Therefore, 
		we also have
		$\left| \Ebf^N_{\phi_N^2} [W_j] - \Ebf^N_{\phi_N^2} [W_j \, | \, \mathscr{G}_{j-2}] \right| \leq c 2^{-N^{\varsigma/2}} \Ebf^N_{\phi_N^2} [W_j]$,
		and in particular
		\begin{equation}\label{eq:Wj-esp-vs-esp-cond}
			\bigg| \sum_{j = 1}^m (W_j - \Ebf^N_{\phi_N^2}[W_j]) - \sum_{j = 1}^m (W_j - \Ebf^N_{\phi_N^2}[W_j \, | \, \mathscr{G}_{j-2}]) \bigg| \leq c 2^{-N^{\varsigma/2}} \Ebf^N_{\phi_N^2} [R_{m\ell}] \, .
		\end{equation}
		
		With these estimates, we will get a bound on the deviations of $R_{m\ell}$, with $m = m_+$ or $m = m_-$. Writing $R_{m \ell} = \sum_{j = 1}^m W_j$ and separating even and odd $j$'s (and $j = 1$), using \eqref{eq:Wj-esp-vs-esp-cond} we obtain that for $N$ large enough,
		\begin{equation}\label{eq:separation-1-pairs-impairs}
			\begin{split}
				\Pbf^N_{\phi_N^2} &\Big( \big| R_{m\ell} - \Ebf^N_{\phi_N^2}[R_{m\ell}] \big| > \tfrac12\eps_N \Ebf^N_{\phi_N^2}[R_{m\ell}] \Big) = \Pbf^N_{\phi_N^2} \bigg( \Big| \sum_{j = 1}^m \big(W_j - \Ebf^N_{\phi_N^2}[W_j]\big) \Big| >  \tfrac12\eps_N  \Ebf^N_{\phi_N^2}[R_{m\ell}] \bigg) \\
				& \qquad \leq \sum_{n \in \mathset{0,1}} \Pbf^N_{\phi_N^2} \bigg( \Big| \sum_{\substack{2 \leq j \leq m\\ j = n \, \mathrm{mod}\, 2}} \big(W_j - \Ebf^N_{\phi_N^2} [W_j \, | \, \mathscr{G}_{j-2}]\big) \Big| > \tfrac14 \eps_N  \Ebf^N_{\phi_N^2}[R_{m\ell}] \bigg) \\[-5pt]
				& \hspace{8cm} + \Pbf^N_{\phi_N^2} \Big(W_1 \geq \tfrac14 \eps_N \Ebf^N_{\phi_N^2}[R_{m\ell}]\Big) \, .
			\end{split}
		\end{equation}
		
		Let us start with the last term in~\eqref{eq:separation-1-pairs-impairs}.
		Using a union bound, we have
		\begin{equation}\label{eq:proba-W_1-grand}
			\Pbf^N_{\phi_N^2} (W_1 > t) = \Pbf^N_{\phi_N^2} (R_{\ell} > t) \leq \ell \Pbf^N_{\phi_N^2} (R_1 > t/\ell) \, .
		\end{equation}
		We now control~\eqref{eq:proba-W_1-grand} thanks to the following claim, whose proof is postponed.
		
		\begin{claim}\label{claim:proba-tps-retour-grand}
			There are constants $C, c' > 0$ such that for all $a > 0$ and $s \geq 1$,
			\[\Pbf^N_{\phi_N^2} (R_1 > s N^{2+a}) \leq C e^{-c's N^{a}} \,,\qquad  \Pbf^N_{\tilde{e}_\Delta^{\phi_N}} (R_1 > s N^{2+a}) \leq C e^{- c's N^{a}}   \, . \]
		\end{claim}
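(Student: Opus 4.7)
The proof decomposes $R_1 = H_\Delta + (H_B \circ \theta_{H_\Delta})$ and controls each hitting-time tail separately. By the strong Markov property at $H_\Delta$ and a union bound, for either starting measure $\mu$ one has
\[
\Pbf^N_\mu\bigl(R_1 > sN^{2+a}\bigr) \leq \Pbf^N_\mu\bigl(H_\Delta > \tfrac12 s N^{2+a}\bigr) + \sup_{y \in \partial\Delta} \Pbf^N_y\bigl(H_B > \tfrac12 s N^{2+a}\bigr) \, ,
\]
so it suffices to bound hitting-time tails of the form $\Pbf^N_z(H_A > t)$ with $z \in B^\eps$ and $A \in \{B,\Delta\}$, with $t = \tfrac12 s N^{2+a}$.

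The key estimate is to write, via~\eqref{eq:proba-htransform-RW-tuee} in its time-$t$ form (valid since under $\Pbf^N$ the walk remains in $D_N$ so the event $\{H_A > t\}$ is decidable at time $t$),
\[
\Pbf^N_z(H_A > t) = \frac{1}{\phi_N(z)} \Ebf_z\bigl[\phi_N(X_t)\, \lambda_N^{-t}\, \indic{H_A > t}\bigr] \, ,
\]
and to split according to whether the SRW trajectory stays inside $B^\eps$ throughout $[0,t]$ or not. On this ``interior'' event, Proposition~\ref{prop:encadrement-ratio} bounds $\phi_N(X_t)/\phi_N(z)$ by a universal constant, while Proposition~\ref{prop:rester-ds-boule-anneau}, applied to the annulus $B^\eps \setminus B$ (of width $\eps N$) when $A = B$ or to the ball of radius $\asymp N$ enclosing $D_N \setminus \Delta \subset B^\eps$ when $A = \Delta$ (using that $N^\gamma \ll \eps N$ for $N$ large), yields an SRW decay factor $Ce^{-c\, t/N^2}$ with $c$ strictly greater than $c_0$. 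Combined with $\lambda_N^{-t} \leq e^{c_0 t/N^2}$ from~\eqref{eq:encadrement-lambda2}, this produces a net exponential decay $e^{-(c-c_0) t/N^2}$ which, with $t = \tfrac12 sN^{2+a}$, gives precisely the target bound $Ce^{-c' sN^a}$. The complementary event, where the walk exits $B^\eps$ before time $t$, is dealt with by iterating the strong Markov property at successive visits to $\partial B^\eps$: each detour outside $B^\eps$ contributes a factor bounded away from $1$ (for the detour to avoid hitting $B$ on its return), producing at most a harmless polynomial correction that is absorbed into the constant.

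The main obstacle is handling the exponentially growing factor $\lambda_N^{-t} = e^{c_0 t/N^2}$ in the Doob transform: this grows on the exact scale $t/N^2$ at which Proposition~\ref{prop:rester-ds-boule-anneau} provides decay, so it must be strictly outweighed. Resolving this forces the restriction to sub-domains of $D_N$ (the annulus $B^\eps \setminus B$ or the ball $B^\eps$) whose first Dirichlet eigenvalue strictly exceeds $\lambda_D/N^2$; the calibration $c_a > \eps^2 c_0$ fixed below Proposition~\ref{prop:rester-ds-boule-anneau} was designed precisely to make the net exponent negative.
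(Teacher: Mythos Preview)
Your decomposition of $R_1$ and your treatment of the $H_\Delta$ tail are fine: on $\{H_\Delta > t\}$ the walk is trapped in $D_N \setminus \Delta \subset B^\eps$, so there is no ``exterior event'' at all, and your Proposition~\ref{prop:rester-ds-boule-anneau} argument (which is just eigenvalue monotonicity $\lambda(B^\eps) < \lambda_N$ in disguise) matches what the paper does in~\eqref{eq:proba-rester-bcp-dans-B}.

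The gap is in your treatment of $\Pbf^N_y(H_B > t)$. Once the walk exits $B^\eps$ it enters the macroscopic region $D_N \setminus B^\eps$, where Proposition~\ref{prop:encadrement-ratio} no longer bounds $\phi_N$ from below and Proposition~\ref{prop:rester-ds-boule-anneau} does not apply. Your iteration idea does not close: from $\partial B^\eps$ the probability that the tilted walk hits $B$ before returning to $\partial B^\eps$ is only $\asymp N^{-1}$ (Lemma~\ref{lem:arg-mart-tilt-B-Beps}), while each exterior detour already costs time $\asymp N^2$; so even an optimistic count of detours in time $sN^{2+a}$ yields at best $(1-c/N)^{c'sN^a} \approx e^{-csN^{a-1}}$, one power of $N$ short of the required $e^{-csN^a}$. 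This is not a polynomial correction that can be absorbed --- it is a genuinely slower rate.

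The paper closes this with a global spectral input you are missing. It writes $\Pbf^N_x(H_B > t) \le \tfrac{C}{\phi_N(x)}\,\lambda_N^{-t}\,\Pbf_x(X_{[0,t]}\subset D_N\setminus B)$ and then uses Lemma~\ref{lem:vp-retire-boule} (removing a macroscopic ball from $D$ strictly increases the first Dirichlet eigenvalue) together with~\eqref{eq:vp-blowup} to get $\lambda(D_N\setminus B)/\lambda_N \le e^{-\eta_B/N^2}$ for some $\eta_B>0$, hence the clean bound $\tfrac{C}{\phi_N(x)}e^{-\eta_B t/N^2}$. A secondary point: under $\mu=\phi_N^2$ the starting point may lie deep in $\Delta$, so $X_{D_0}=X_0\notin\partial\Delta$ and your $\sup_{y\in\partial\Delta}$ is not valid; the paper handles this by the explicit split~\eqref{eq:proba-retour-grand-phi2}, after which the dangerous factor $1/\phi_N(x)$ is cancelled by one copy of $\phi_N(x)$ from the weight, leaving $\|\phi_N\|_1/\|\phi_N\|_2^2 \asymp 1$.
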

		
		Recall that Claim~\ref{claim:esp-temps-k-retour} states that $\Ebf^N_{\phi_N^2}[R_{\ell m_{\pm}}] \sim (1 \pm \eps_N) t_N$, we claim that the last term in \eqref{eq:separation-1-pairs-impairs} is bounded as follows: there exists some $a > 0$ such that
		\begin{equation}\label{eq:borne-premier-retour-grape}
			\Pbf^N_{\phi_N^2} \Big(W_1 \geq \tfrac14 \eps_N \Ebf^N_{\phi_N^2}[R_{m\ell}]\Big) \leq \ell \Pbf^N_{\phi_N^2} \big( R_1 > \tfrac18 \eps_N t_N \ell^{-1} \big) \leq \ell e^{- c'N^{ a}}  \, .
		\end{equation}
		To get \eqref{eq:borne-premier-retour-grape}, we have used Claim \ref{claim:proba-tps-retour-grand} for the last inequality, recalling that $\ell \defeq N^{\varsigma} T^Y_{\mix} \leq c N^{\varsigma + \frac{\delta}{4(d-2)}}$ (recall Proposition~\ref{prop:temps-mixage} and the assumptions on $\gamma$) and that $\eps_N t_N \geq N^{2+\delta(1 - \frac{1}{4})}$, so that $\tfrac18 \eps_N t_N \ell^{-1} \geq N^{2+a}$ provided that $a>0$ is small enough (and that we have fixed $\varsigma$ small enough) as well as $N$ large enough.

		We now turn to the first term in~\eqref{eq:separation-1-pairs-impairs}. In order to apply the Azuma-Hoeffding inequality to the first term in \eqref{eq:separation-1-pairs-impairs}, our first task is to bound the increments appearing in the sum inside the probability.	
		
		First of all, notice that using~\eqref{eq:proba-cond-Wj-geq-t} and a union bound, we get similarly to~\eqref{eq:proba-W_1-grand}	that
		\begin{equation}\label{eq:proba-W_j-grand}
			\Pbf^N_{\phi_N^2} (W_j > t \, | \, \mathscr{G}_{j-2}) \leq (1 + c2^{-N^{\varsigma/2}}) \Pbf^N_{\tilde{e}_\Delta^{\phi_N}} (R_{\ell} > t) \leq 2 \ell \Pbf^N_{\tilde{e}_\Delta^{\phi_N}} (R_1 > t/\ell) \, .
		\end{equation}			
		Then, applying~\eqref{eq:proba-W_j-grand} with Claim~\ref{claim:proba-tps-retour-grand}, we get that $\Pbf^N_{\phi_N^2} (W_j > \ell N^{2+a}) \leq \ell \exp(-c'N^{a})$, so that using a union bound for the event that one of the $W_j$'s is larger than $\ell N^{2+a}$, we get, for $n\in\{0,1\}$
		\begin{align}\label{eq:restriction-hat-Wj}
			&\Pbf^N_{\phi_N^2} \bigg( \Big| \sum_{\substack{1 \leq j \leq m\\ j = n \, \mathrm{mod}\, 2}} \big(W_j - \Ebf^N_{\phi_N^2} [W_j \, | \, \mathscr{G}_{j-2}] \big) \Big| > \tfrac{\eps_N}{4} \Ebf^N_{\phi_N^2}[R_{m\ell}] \bigg) \\  &\leq m\ell  e^{-c' N^{a}} + \Pbf^N_{\phi_N^2} \bigg( \Big| \sum_{\substack{2 \leq j \leq m\\ j = n \, \mathrm{mod}\, 2}} \big(W_j - \Ebf^N_{\phi_N^2} [W_j \, | \, \mathscr{G}_{j-2}]\big) \Big| > \tfrac{\eps_N}{4} \Ebf^N_{\phi_N^2}[R_{m\ell}] \, , \, \max_{2\leq j \leq n} W_j \leq \ell N^{2+a} \bigg) \, . \notag
		\end{align}
		For $j \in \mathset{2, \dots , m}$, we define $\hat{W}_j \defeq W_j \wedge \ell N^{2+a}$ and note that we have
		\begin{equation*}
			\begin{split}
				\left| \Ebf^N_{\phi_N^2} [W_j \, | \, \mathscr{G}_{j-2}] - \Ebf^N_{\phi_N^2} [\hat{W}_j \, | \, \mathscr{G}_{j-2}] \right| &= \Ebf^N_{\phi_N^2} [W_j \indic{W_j > \ell N^{2+a}} \, | \, \mathscr{G}_{j-2}] \\
				&\hspace{-1cm} = \int_{\ell N^{2+a}}^{+\infty} \Pbf^N_{\phi_N^2} (W_j > t \, | \, \mathscr{G}_{j-2}) \, dt  \,  + \ell N^{2+a}\Pbf^N_{\phi_N^2}(W_j >\ell N^{2+a}\, | \, \mathscr{G}_{j-2}).
			\end{split}
		\end{equation*}
		Let us deal with the first term (the second term is dealt with in a similar fashion). Using~\eqref{eq:proba-W_j-grand} and then Claim \ref{claim:proba-tps-retour-grand}, we get that
		\[
		\int_{\ell N^{2+a}}^{+\infty} \Pbf^N_{\phi_N^2} (W_j > t \, | \, \mathscr{G}_{j-2}) \, dt \leq 2\ell \, (\ell N^{2+a}) \int_{1}^{+\infty} \Pbf^N_{\tilde{e}_\Delta^{\phi_N}}(R_1 > u N^{2+a} ) \, du
		\leq C' e^{- c' N^{a}} \,.
		\]

		Since $e^{-c' N^{ a}} = \bar{o}(\eps_N t_N)$, we obtain thanks to Claim \ref{claim:esp-temps-k-retour} that $C' e^{- c'N^{ a}} \leq \tfrac{\eps_N}{8} \Ebf^N_{\phi_N^2}[R_{m\ell}]$ for~$N$ large enough.
		Therefore, we conclude that the last term in the right-hand side of \eqref{eq:restriction-hat-Wj} is bounded from above by
		\begin{equation}\label{eq:azuma-proba-a-borner}
			\Pbf^N_{\phi_N^2} \bigg( \Big| \sum_{\substack{2 \leq j \leq m\\ j = n \, \mathrm{mod}\, 2}} (\hat{W}_j - \Ebf^N_{\phi_N^2} [\hat{W}_j \, | \, \mathscr{G}_{j-2}]) \Big| > \tfrac{\eps_N}{8} \Ebf^N_{\phi_N^2}[R_{m\ell}] \bigg) \, .
		\end{equation}
		We are now in the position of using Azuma--Hoeffding inequality to bound the probability in \eqref{eq:azuma-proba-a-borner}. Since $|\hat{W}_j - \Ebf^N_{\phi_N^2} [\hat{W}_j \, | \, \mathscr{G}_{j-2}]| \leq 2 \ell N^{2+a}$ by definition, we have
		\begin{equation}\label{eq:azuma-1ere-apply}
			\Pbf^N_{\phi_N^2} \bigg( \Big| \sum_{\substack{2 \leq j \leq m\\ j = n \, \mathrm{mod}\, 2}} (\hat{W}_j - \Ebf^N_{\phi_N^2} [\hat{W}_j \, | \, \mathscr{G}_{j-2}]) \Big| > \tfrac{\eps_N}{8} \Ebf^N_{\phi_N^2}[R_{m\ell}] \bigg) \leq \exp \bigg( - c \frac{2 (\tfrac{\eps_N}{8} \Ebf^N_{\phi_N^2}[R_{m\ell}])^2}{m (\ell N^{2+a})^2} \bigg) \, ,
		\end{equation}
		where $c > 0$ does not depend on $N$ large enough.
		Now recall that $\Ebf^N_{\phi_N^2}[R_{m\ell}] \sim  t_N$ (see Claim~\ref{claim:esp-temps-k-retour}) and that $m \ell = k_{\pm}\sim \frac{t_N}{ \| \phi_N\|_2^2} \cpc_\Delta^{\phi_N}(B)$ by definition, with $\| \phi_N\|_2^2 = N^{d}$ (recall our choice of normalization \eqref{def:eigenfunction}) and $\cpc_\Delta^{\phi_N}(B) \asymp N^{d-1-\gamma}$  (see Lemma~\ref{lem:encadrement-mes-harmonique}): the upper bound in~\eqref{eq:azuma-1ere-apply} then becomes
		\[
		\exp \bigg( - c \frac{2 (\tfrac{\eps_N}{8} \Ebf^N_{\phi_N^2}[R_{m\ell}])^2}{m (\ell N^{2+a})^2} \bigg) 
		\leq \exp\Big( - c' \eps_N^2 t_N N^{1+\gamma} \ell^{-1} N^{-2(2+a)}\Big) \leq 
		\exp\Big( - c' N^{\frac12 \delta +2(\gamma-1) -\varsigma -2 a}\Big) \,,
		\]	
		where we have used that $\eps_N^2 t_N \geq N^{2 + \frac12 \delta}$ and $\ell \defeq N^{\varsigma} T^Y_{\mix} \leq c N^{\varsigma +1-\gamma}$ (see Proposition~\ref{prop:temps-mixage}).

		Injecting all of these estimates in \eqref{eq:restriction-hat-Wj}, we finally get
		\begin{equation}
			\Pbf^N_{\phi_N^2} \Big(|R_{m_\pm \ell} - \Ebf^N_{\phi_N^2}[R_{m_\pm\ell}]| > \eps_N \Ebf^N_{\phi_N^2}[R_{m\ell}] \Big) \leq m\ell  e^{-N^{c'a}} + \exp(-c' N^{\frac 12 \delta - 2 (1 - \gamma) -\varsigma -2 a}) \, ,
		\end{equation}
		which concludes the proof of~\eqref{prop:nb-excursions-tN:eq2}, since by assumption $2 (1 - \gamma) < \frac{\delta}{2(d-2)}$, provided that $\varsigma,a$ has been fixed close enough to $0$ (that is such that $\varsigma + 2 a < \tfrac12 \delta - 2 (1 - \gamma)$).
	\end{proof}

	\begin{proof}[Proof of Claim \ref{claim:proba-tps-retour-grand}]
		Recall that $R_1$ is the first hitting time of $B$ \underline{after} visiting $\Delta$. In particular, since $\tilde{e}_\Delta^{\phi_N}$ has support in $\partial B$, for the second part of the claim we get
		\begin{equation}\label{eq:proba-retour-grand-bord-B}
			\Pbf^N_{\tilde{e}_\Delta^{\phi_N}} (R_1 > s N^{2+a}) \leq \Pbf^N_{\tilde{e}_\Delta^{\phi_N}} (H_\Delta > \tfrac12 s N^{2+a}) + \sup_{y \in \partial \Delta} \Pbf^N_{y} (H_B > \tfrac12 s N^{2+a}) \, ,
		\end{equation}
		while for the first part, we write
		\begin{equation}\label{eq:proba-retour-grand-phi2}
			\begin{split} \Pbf^N_{\phi_N^2} (R_1 > s N^{2+a}) & \leq \frac{1}{\| \phi_N\|_2^2} \sum_{x \in D_N \setminus \Delta} \phi_N^2(x) \Big[\Pbf^N_{x} (H_\Delta > \tfrac12 s N^{2+a}) + \sup_{y \in \partial \Delta} \Pbf^N_{y} (H_B > \tfrac12 s N^{2+a}) \Big] \\
				& \hspace{4cm} + \frac{1}{\| \phi_N\|_2^2} \sum_{x \in \Delta} \phi_N^2(x) \Pbf^N_{x} (H_B > s N^{2+a}) \, .
			\end{split}
		\end{equation}
		Using the expression~\eqref{eq:def-p_N} of the transition kernel $p_N$ and Proposition \ref{prop:encadrement-ratio}, we have
		\begin{equation}\label{eq:attentionJP}
			\Pbf^N_{x} (H_\Delta > \tfrac12 s N^{2+a}) \leq \kappa_1 \sum_{k \geq 0} (\lambda_N)^{- \tfrac12 s N^{2+a}  - (k+1) N^2 } \Pbf_x \big( H_\Delta >  \tfrac12 s N^{2+a}  + k N^2 \big) \, .
		\end{equation}
		Using Proposition \ref{prop:equivalent-proba-sortie-eigen}-\eqref{rem:encadrement-proba-sortie-eigen} and since $N^{2+a} \gg N^2 \log N$, we have that 
		\[
		\Pbf_x \big( H_\Delta >  \tfrac12 s N^{2+a}  + k N^2 \big) \leq c \lambda(D_N \setminus \Delta)^{\tfrac12 s N^{2+a}  + k N^2} \, , 
		\]
		where, if $K \subset \ZZ^d$, we set $\lambda(K)$ as the first eigenvalue of the transition matrix of the simple random walk killed on $\partial K$. By eigenvalues comparisons (a consequence of the min-max theorem, see \textit{e.g.} \cite[Chap. 11, Th. 1]{strauss2007partial}) and the expression of eigenvalues on a blowup (see~\eqref{eq:vp-blowup}), we have that
		\[
		\frac{\lambda(D_N \setminus \Delta)}{\lambda_N}	 \leq  \frac{\lambda(D_N \setminus \Delta)}{\lambda(B^\eps)} = \frac{1 - c_d \frac{\lambda_b}{(\alpha N + N^\gamma)^2}(1 + \bar{o}(1))}{1 - c_d \frac{\lambda_b}{(\alpha N + \eps N)^2}(1 + \bar{o}(1))} = \exp \Big( - \frac{c_d \lambda_b}{(\alpha N)^2} \Big[ \frac{\eps(2\alpha+\eps)}{(\alpha+\eps)^2} + \bar{o}(1) \Big] \Big) \, . \]
		In particular, we get an upper bound provided $N$ large enough: for any $x \in D_N \setminus \Delta$,
		\begin{equation}\label{eq:proba-rester-bcp-dans-B}
			\Pbf^N_{x} (H_\Delta > \tfrac12 s N^{2+a}) \leq c \kappa_1 \sum_{k \geq 0} \exp \Big[ - \frac{c_d \lambda_b}{2\alpha^2} \Big( \frac{\eps(2\alpha+\eps)}{(\alpha+\eps)^2} + \bar{o}(1) \Big) (s N^a + k) \Big] \leq C e^{-c s N^a} \, .
		\end{equation}
		Consider now some $x \in \Delta$. We have
		\begin{equation}\label{eq:proba-eviter-B-depuis-Delta}
			\Pbf^N_{x} (H_B >  \tfrac12  s N^{2+a}) \leq \frac{1}{\phi_N(x)} \sum_{k \geq 0} (\lambda_N)^{- \tfrac12 s N^{2+a}  - (k+1) N^2} \Pbf_x \big( H_B > \tfrac12 s N^{2+a}  + k N^2 \big) \, .
		\end{equation}
		Let $\lambda_j^{\ZZ}(K)$ stand for the $j$-th eigenvalue of the transition matrix of the walk killed on $\partial K$, and let $\lambda_j^\RR(\Omega)$ be the $j$-th Dirichlet eigenvalue of (minus) the Laplacian on the domain $\Omega \subset \RR^d$. By \eqref{eq:vp-blowup} and $x_0^N = N x_0$, we have that
		\[ \forall j \geq 1 \,, \quad \lambda_j^\ZZ( D_N \setminus B_N) = 1 - \frac{c_d}{N^2} \lambda_j^\RR(D \setminus B(x_0,\alpha))(1 + \bar{o}(1)) \, . \]			
		In particular, the transition kernel of the random walk killed on $\partial B_N \cup \partial D_N$ admits a spectral gap of order $N^{-2}$. With Lemma \ref{lem:vp-retire-boule}, there is a positive $\eta_B = \eta_B(x_0,d,\alpha,D)$ such that $\lambda_1^\RR(D \setminus B(x_0,\alpha)) > \lambda + \eta_B$, where we recall that $\lambda$ is the first eigenvalue of the Laplace-Beltrami operator on $D$. In particular, using again Proposition \ref{prop:equivalent-proba-sortie-eigen}-\eqref{rem:encadrement-proba-sortie-eigen}, we have for any $x \in \Delta$:
		\begin{equation}\label{eq:proba-rester-bcp-dans-Delta}
			\begin{split}
				\Pbf^N_{x} (H_B > \tfrac12 s N^{2+a}) &\leq \frac{1}{\phi_N(x)} \sum_{k \geq 0} (\lambda_N)^{- \tfrac12 s N^{2+a}  - (k+1) N^2} \Pbf_x \big( H_B > \tfrac12 s N^{2+a}  + (k+1) N^2 \big)\\
				&\leq \frac{C}{\phi_N(x)} \sum_{k \geq 0} \exp \Big( - \eta_B (\tfrac12 s N^{a}  + k) \Big) \leq \frac{C}{\phi_N(x)} e^{- c s N^a} \, .
			\end{split}
		\end{equation}
		If we restrict our choices of $x$ to $\partial \Delta$, we have $\phi_N(x) \geq \kappa_1$ (recall Proposition~\ref{prop:encadrement-ratio}). Therefore, injecting \eqref{eq:proba-rester-bcp-dans-B} and \eqref{eq:proba-rester-bcp-dans-Delta} in \eqref{eq:proba-retour-grand-bord-B} gives the first part of the claim.
		
		For the second part, also injecting \eqref{eq:proba-rester-bcp-dans-B} and \eqref{eq:proba-rester-bcp-dans-Delta} this time in \eqref{eq:proba-retour-grand-phi2}, we get
		\begin{equation*}
			\begin{split} \Pbf^N_{\phi_N^2} (R_1 > s N^{2+a}) &\leq \frac{1}{\| \phi_N\|_2^2} \sum_{x \in D_N \setminus \Delta} \phi_N^2(x) \Big[C e^{-c s N^a} + \frac{C}{\kappa_1} e^{- c u N^a} \Big] + \frac{1}{\| \phi_N\|_2^2} \sum_{x \in \Delta} \phi_N(x) C e^{- c s N^a} \\
				&\leq \Big[ 1 + \tfrac{\| \phi_N \|}{\| \phi_N\|_2^2} \Big] C' e^{- c s N^a} \, .
			\end{split}
		\end{equation*}
		With \eqref{eq:phiN-borne} we can easily prove that $\| \phi_N \| \asymp N^d$, and we recall $\| \phi_N\|_2^2 = N^d$ by \eqref{def:eigenfunction} This means that $1 + \tfrac{\| \phi_N \|}{\| \phi_N\|_2^2}$ is bounded from above by a constant provided $N$ large enough, therefore finishing the proof of the first part of the claim.
	\end{proof}

	\begin{proof}[Proof of Claim \ref{claim:esp-temps-k-retour}]
		By the Markov property and \eqref{eq:loi-retour-sachant-Gj}, we have
		\[
		\Ebf^N_{\phi_N^2}[R_{m \ell}] = \Ebf^N_{\phi_N^2}[R_{2 \ell}] + (1 + \grdO(2^{-N^{\varsigma/2}})) \Ebf^N_{\tilde{e}_\Delta^{\phi_N}}[R_{(m-2) \ell}] \, .
		\]
		Since $\tilde{e}_\Delta^{\phi_N}$ is stationary for the entrance site in $B$ of the confined RW, we have
		\[ \Ebf^N_{\tilde{e}_\Delta^{\phi_N}}[R_{(m-2) \ell}] = (m-2) \ell \cdot \Ebf^N_{\tilde{e}_\Delta^{\phi_N}}[R_1] \, . \]
		To get an expression for this expectation, we observe that $(X_{R_i}, R_i)_{i \geq 1}$ forms a Markov renewal process. The renewal theorem for Markov renewal processes (see \textit{e.g.} \cite[Section 6.a]{cinlar1969markov}) then states that for any $N \geq 1$, under $\Pbf_{\phi_N^2}^N$,
		\[ \frac{\mathcal{N}(t)}{t} \xrightarrow[t \to +\infty]{L^1} \frac{1}{\Ebf^N_{\tilde{e}_\Delta^{\phi_N}}[R_1]} \, . \]
		Therefore, using Lemma \ref{lem:esp-nb-retour-walk}, we have
		\[ \Ebf^N_{\tilde{e}_\Delta^{\phi_N}}[R_{(m-2) \ell}] = (m-2) \ell \Ebf^N_{\tilde{e}_\Delta^{\phi_N}}[R_1] = (m-2) \ell \left[ \lim_{t \to +\infty} \frac{1}{t} \Ebf_{\phi_N^2}^N \left[ \mathcal{N}(t) \right] \right]^{-1} = (m-2) \ell \frac{\| \phi_N\|_2^2}{\cpc_\Delta^{\phi_N}(B)} \, . \]
		
		With $m = m_-$ or $m = m_+$, we thus proved that
		\[
		\Ebf^N_{\tilde{e}_\Delta^{\phi_N}}[R_{k_{\pm}}] = (1 + \grdO(2^{-N^{\varsigma/2}}))  k_{\pm} \frac{\| \phi_N\|_2^2}{\cpc_\Delta^{\phi_N}(B)} +  \Ebf^N_{\phi_N^2}[R_{2 \ell}] -2 \ell  \frac{\| \phi_N\|_2^2}{\cpc_\Delta^{\phi_N}(B)} \,.
		\]
		Note that $ k_{\pm} \frac{\| \phi_N\|_2^2}{\cpc_\Delta^{\phi_N}(B)} \asymp t_N$ by definition and that $\ell := N^{\varsigma} T^Y_{\mix} \leq c N^{\varsigma +1-\gamma}$ (see Proposition~\ref{prop:temps-mixage}), $\| \phi_N\|_2^2 = N^{d}$ (recall \eqref{def:eigenfunction}) and $\cpc_\Delta^{\phi_N}(B) \asymp N^{d-1-\gamma}$  (see Lemma~\ref{lem:encadrement-mes-harmonique}), so that
		\begin{equation}
			\label{eq:separation-esp-ke-retour}
			\Ebf^N_{\tilde{e}_\Delta^{\phi_N}}[R_{k_{\pm}}] = k_{\pm} \frac{\| \phi_N\|_2^2}{\cpc_\Delta^{\phi_N}(B)} +  \Ebf^N_{\phi_N^2}[R_{2 \ell}] + \grdO(2^{-N^{\varsigma/2}}) t_N + \grdO(N^{2+\varsigma}) \,.
		\end{equation}
		Note that since $\eps_N t_N \geq N^{2 + \frac14 \delta}$, taking $\varsigma$ small enough we have $\grdO(N^{2+\varsigma}) = \grdO(N^{-c}) \eps_N t_N$. Furthermore, observe that $\grdO(2^{-N^{\varsigma/2}}) = \bar{o}(\eps_N)$.
		
		\smallskip
		It then remains to control $\Ebf^N_{\phi_N^2}[R_{2 \ell}]$.
		%
		%
		We first observe that because of the differences in the starting point of the confined RW trajectory, we have to consider separately the start (before $R_1$) and the following visits to $B$. Therefore,
		\begin{equation}
			\label{eq:allexpectations1}
			\Ebf^N_{\phi_N^2}[R_{2 \ell}] \leq \Ebf^N_{\phi_N^2}[R_1] + (2\ell-1) \sup_{x \in \partial B} \Ebf^N_{x}[R_1] \, .
		\end{equation}
		Recall that $R_1$ is the first hitting time of $B$ \underline{after} visiting $\Delta$, therefore
		\begin{equation}
			\label{eq:allexpectations2}
			\Ebf^N_{\phi_N^2}[R_1] \leq \frac{1}{\| \phi_N\|_2^2} \left[ \sum_{x \in \Delta} \phi_N^2(x) \Ebf^N_x[H_B] +  \sum_{x \in D_N \setminus \Delta} \phi_N^2(x) \Big(\Ebf^N_x[H_\Delta] + \sup_{y \in \partial \Delta} \Ebf_y[H_B] \Big) \right] \, .
		\end{equation}
		With the same proof as Claim \ref{claim:proba-tps-retour-grand}, we can easily prove that for any fixed $r>0$, the expectations appearing in \eqref{eq:allexpectations2}, and thus in~\eqref{eq:allexpectations1}, are bounded from above by $C N^{2 + r}$. We thus conclude that	
		\[  \Ebf^N_{\phi_N^2}[R_{2 \ell}] \leq 2C \ell N^{2+r}  \leq c N^{2 + r + \varsigma + (1-\gamma)} \, , \]
		where we used Proposition \ref{prop:temps-mixage} for the last inequality.
		
		All together, provided that $r, \varsigma$ are fixed close enough to $0$, recalling that $\eps_N t_N \geq N^{2+ \frac34 \delta}$, we obtain that 
		\[
		\Ebf^N_{\tilde{e}_\Delta^{\phi_N}}[R_{k_{\pm}}] = k_{\pm} \frac{\| \phi_N\|_2^2}{\cpc_\Delta^{\phi_N}(B)} +   \grdO(N^{-\frac14 \delta}) \eps_N t_N  \,,
		\]
		which completes the proof of the claim.
	\end{proof}

	\subsubsection{For the tilted RI}	
	
	We now estimate the number of excursions in $B$ of the tilted RI at some level $u > 0$.	Recall notation from Section~\ref{ssec:defYZ} that each trajectory $\omega^j$ hitting $B$ has $T^j$ excursions $(\omega^j_i)_{i \leq T^j}$ starting at $B$ and ending at $\partial \Delta$. Let $J^N_{u}$ be a Poisson random variable with parameter $u \,\cpc^{\Psi_N}(B)$.
	Given $(T^j)_{j \geq 1}$ we define the number of excursions at level $u$ of the $\Psi_N$-tilted interlacement as
	\begin{equation}
		\mathcal{N}_{\Psi_N}(u) = \sum_{j = 1}^{J^N_u} T^j \, .
	\end{equation}
	
	\begin{proposition}\label{prop:nb-excursions-entrelac}
		There are constants $c,C > 0$ such that for $N$ large enough,
		\begin{equation}\label{eq:prop:nb-excursions-entrelac}
			\proba{ \big| \mathcal{N}_{\Psi_N}(u) - u \, \lambda_N \cpc_\Delta^{\phi_N}(B) \big| \geq \eps_N u \lambda_N  \cpc_\Delta^{\phi_N}(B)} \leq C e^{-c  u\, \eps_N^2  N^{d-2}} \, .
		\end{equation}	
	\end{proposition}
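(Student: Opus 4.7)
The plan is to exploit the compound Poisson structure of $\mathcal{N}_{\Psi_N}(u)=\sum_{j=1}^{J_u^N}T^j$, where $J_u^N$ is Poisson$(u\,\cpc^{\Psi_N}(B))$ and is independent of the i.i.d.\ sequence $(T^j)_{j\ge 1}$ of copies of $T^1$ under $\Pbf^{\Psi_N}_{\bar{e}^{\Psi_N}_B}$, and to derive concentration via a Cram\'er--Chernoff argument driven by a quadratic m.g.f.\ bound on $T^1$. The first step is to identify the mean. By Wald's identity,
\[
\Ebf\big[\mathcal{N}_{\Psi_N}(u)\big]=u\,\cpc^{\Psi_N}(B)\,\Ebf^{\Psi_N}_{\bar{e}^{\Psi_N}_B}[T^1]\,,
\]
and summing the expression \eqref{eq:esperance-pi-Z-terme-final} over $x\in\partial B$ (after identifying the hidden constant $C_N$ via the last-exit decomposition \eqref{eq:LED-tilted}, exactly as in the proof of Lemma~\ref{lem:mes-inv-Z}) gives $\Ebf^{\Psi_N}_{\bar{e}^{\Psi_N}_B}[T^1]=\lambda_N\,\cpc_\Delta^{\phi_N}(B)/\cpc^{\Psi_N}(B)$, hence the announced mean $u\,\lambda_N\,\cpc_\Delta^{\phi_N}(B)$.

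Next, I would control the exponential moment of $T^1$. Each completed excursion restarts the walk at some $X_{D^1_i}\in\partial\Delta$, and by the strong Markov property the $(i+1)$-th excursion occurs with conditional probability $\Pbf^{\Psi_N}_{X_{D^1_i}}(\bar{H}_B<+\infty)$. Lemma~\ref{lem:toucher-B-avant-Beps-start-Delta} supplies the \emph{uniform} escape bound $\inf_{y\in\partial\Delta}\Pbf^{\Psi_N}_y(\bar{H}_B=+\infty)\ge p_N:=c\,N^{\gamma-1}$, so $T^1$ is stochastically dominated by $1+G$ with $G\sim\mathrm{Geom}(p_N)$. In particular $\Ebf[T^1]\asymp N^{1-\gamma}$ and $m_2:=\Ebf[(T^1)^2]\asymp N^{2(1-\gamma)}$, and a direct expansion of the geometric m.g.f.\ yields
\[
\Ebf^{\Psi_N}_{\bar{e}^{\Psi_N}_B}\big[e^{\lambda T^1}\big]\le 1+\lambda\,\Ebf[T^1]+C\,\lambda^2\,m_2\,,\qquad |\lambda|\le p_N/4\,,
\]
while for negative exponents the elementary inequality $e^{-x}\le 1-x+x^2/2$ (for $x\ge 0$) gives $\Ebf^{\Psi_N}_{\bar{e}^{\Psi_N}_B}[e^{-\lambda T^1}]\le 1-\lambda\,\Ebf[T^1]+\lambda^2 m_2/2$ for all $\lambda\ge 0$.

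Finally, I apply Cram\'er--Chernoff. The log-Laplace transform of the compound Poisson is $\psi(\lambda)=u\,\cpc^{\Psi_N}(B)\big(\Ebf[e^{\lambda T^1}]-1\big)$, so the preceding step yields, for $|\lambda|\le p_N/4$,
\[
\psi(\lambda)\le \lambda\,\Ebf[\mathcal{N}_{\Psi_N}(u)]+C\,\lambda^2\,V_N\,,\quad V_N:=u\,\cpc^{\Psi_N}(B)\,m_2\asymp u\,N^{d-2\gamma}\,,
\]
using $\cpc^{\Psi_N}(B)\asymp N^{d-2}$ from Lemma~\ref{lem:encadrement-mes-harm-tilt}. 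Optimizing the Chernoff bound on both tails at $\lambda_*\asymp \eps_N\,\Ebf[\mathcal{N}_{\Psi_N}(u)]/V_N\asymp \eps_N\,N^{\gamma-1}$ (indeed $\le p_N/4$ since $\eps_N\to 0$), and using $\Ebf[\mathcal{N}_{\Psi_N}(u)]\asymp u\,N^{d-1-\gamma}$ (from Lemma~\ref{lem:encadrement-mes-harmonique} and $\lambda_N\to 1$), produces the exponent
\[
-c\,\frac{\eps_N^2\,\Ebf[\mathcal{N}_{\Psi_N}(u)]^2}{V_N}\;\asymp\;-c\,u\,\eps_N^2\,N^{d-2}\,,
\]
which matches \eqref{eq:prop:nb-excursions-entrelac}.

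The main obstacle is the uniformity required in Step~2: a mere in-average bound on the escape probability would not yield a geometric stochastic domination of $T^1$ and the clean quadratic m.g.f.\ bound would break down, causing the Chernoff optimization to deteriorate by a factor that would swallow the $u\,\eps_N^2\,N^{d-2}$ gain. Lemma~\ref{lem:toucher-B-avant-Beps-start-Delta}, tailored exactly to points at distance $\asymp N^\gamma$ from $B$, supplies the required uniform lower bound on $\Pbf^{\Psi_N}_y(\bar{H}_B=+\infty)$, so the remaining computations are routine.
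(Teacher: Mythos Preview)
Your proof is correct and uses essentially the same ingredients as the paper: the mean identification via \eqref{eq:esperance-pi-Z-terme-final}, the uniform geometric domination of $T^1$ coming from Lemma~\ref{lem:toucher-B-avant-Beps-start-Delta}, and the capacity estimates $\cpc^{\Psi_N}(B)\asymp N^{d-2}$, $\cpc_\Delta^{\phi_N}(B)\asymp N^{d-1-\gamma}$. The only organizational difference is that the paper splits the deviation into two pieces---first a Chernov bound on the Poisson variable $J^N_u$ alone, then a Chernov bound on $\sum_{j=1}^{v}T^j$ for deterministic $v=v_{N,u}^{\pm}=(u\pm\tfrac12\eps_N)\cpc^{\Psi_N}(B)$, and finally combines the two---whereas you go directly through the compound Poisson log-Laplace formula $\psi(\lambda)=u\,\cpc^{\Psi_N}(B)\big(\Ebf[e^{\lambda T^1}]-1\big)$ and optimize once; both routes land on the same exponent $-c\,u\,\eps_N^2\,N^{d-2}$.
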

	
	\begin{proof}
		Since $J_u^N$ is a Poisson random variable, we can use the Chernov bound (see e.g.\ \cite[Section 2.2]{inegalite-concentration}) to get that
		\begin{equation*}
			\proba{ \big| J^N_u - u \cpc^{\Psi_N}(B) \big| \geq \tfrac12 \eps_N \, u\, \cpc^{\Psi_N}(B)} \leq \exp\Big( - u \, \cpc^{\Psi_N}(B)\, [h(\tfrac12 \eps_N) + h(-\tfrac12 \eps_N)] \Big) \, ,
		\end{equation*}
		where $h(x) := (1+x) \log(1+x) - x$. We easily see with a Taylor expansion that for $N$ large enough, $h(\tfrac12 \eps_N) + h(-\tfrac12 \eps_N) \geq (\tfrac14 \eps_N)^2$. Therefore,
		\begin{equation*}
			\proba{ \big| J^N_u - u \cpc^{\Psi_N}(B) \big| \geq \tfrac12 \eps_N \, u \, \cpc^{\Psi_N}(B)} \leq \exp\Big( -c(\tfrac14 \eps_N)^2 u\, \cpc^{\Psi_N}(B) \Big) \leq \exp\Big( -c' \eps_N^2 u\, N^{d-2} \Big)  \, ,
		\end{equation*}
		using also that  $\cpc^{\Psi_N}(B) \asymp N^{d-2}$, recall Lemma~\ref{lem:encadrement-mes-harm-tilt}.	
		
		Notice that the random variables $(T^j)_{j \geq 1}$ are i.i.d, with distribution dominated by a geometric variable. Summing \eqref{eq:esperance-pi-Z-terme-final} over $x \in \partial B$, we obtain
		\begin{equation}
			\mathbf{E}_{\bar{e}^{\Psi_N}_B}^{\Psi_N} \big[ T^{1} \big] = \frac{\lambda_N}{\cpc^{\Psi_N}(B)} \sum_{x \in \partial B}  \phi_N(x)^2 \Pbf^N_x(H_{\Delta} < \bar{H}_B) = \lambda_N \frac{\cpc_\Delta^{\phi_N}(B)}{\cpc^{\Psi_N}(B)} \, .
		\end{equation}
		Again, with the Chernov bound (since $T^j$ has an exponential moment because of the domination), we have
		\begin{equation}\label{eq:deviation-somme-Tj}
			\proba{\left|\sum_{j = 1}^{v} T^j - v \lambda_N \frac{\cpc_\Delta^{\phi_N}(B)}{\cpc^{\Psi_N}(B)} \right| \geq \frac12 v \lambda_N \frac{\cpc_\Delta^{\phi_N}(B)}{\cpc^{\Psi_N}(B)}} \leq C \exp \left(-c \eta^2 v \lambda_N \frac{\cpc_\Delta^{\phi_N}(B)}{\cpc^{\Psi_N}(B)} \right) \, .
		\end{equation}
		In particular, applying the bound of \eqref{eq:deviation-somme-Tj} with $v_{N,u}^\pm = (u \pm \tfrac12 \eps_N) \cpc^{\Psi_N}(B)$, and taking $N$ large enough so that $\lambda_N \geq \tfrac12$, we have
		\begin{equation}
			\begin{split}
				\proba{\Big|\frac{\sum_{j = 1}^{v_{N,u}^\pm} T^j}{(u \pm \tfrac12 \eps_N) \lambda_N \cpc_\Delta^{\phi_N}(B)} - 1 \Big| \geq \frac12 \eps_N} &\leq C \exp \left(-c \eps_N^2 \lambda_N \cpc_\Delta^{\phi_N}(B) \right) \\
				&\leq C \exp \left(-c' \eps_N^2 N^{d-1-\gamma} \right) \, ,
			\end{split}
		\end{equation}
		where we used $\cpc_\Delta^{\phi_N}(B) \asymp N^{d-1-\gamma}$, recall Lemma \ref{lem:encadrement-mes-harm-tilt}.
		
		Therefore, we can control the deviations of $\mathcal{N}_{\Psi_N}(u)$:
		\begin{equation}
			\begin{split}
				\proba{\Big|\frac{\mathcal{N}_{\Psi_N}(u)}{u \lambda_N \, \cpc_\Delta^{\phi_N}(B)} - 1\Big| \geq \eps_N} &\leq \PP \Big( \Big| \frac{J^N_u}{u \cpc^{\Psi_N}(B)} - 1 \Big| \geq \tfrac12 \eps_N \Big) + 2C \exp \left(-c' \eps_N^2 N^{d-1-\gamma} \right) \\
				&\leq C e^{-c u \eps_N^2 N^{d-2}} + 2C \exp \left(-c' \eps_N^2 N^{d-1-\gamma} \right) \, .
			\end{split}
		\end{equation}
		Since $\gamma < 1$, we get \eqref{eq:prop:nb-excursions-entrelac} for $N$ large enough.
	\end{proof}
	
	Observe that since we want to have $\mathcal{N}(t_N) \approx \mathcal{N}_{\Psi_N}(u_N)$, combining Propositions \ref{prop:nb-excursions-tN} \& \ref{prop:nb-excursions-entrelac} imposes to take $u_N = t_N/ \lambda_N \| \phi_N\|_2^2$.
	
	\subsection{Coupling of entrance and exit points}

	Let us recall that $\mathcal{N}(t_N)$ is the number of excursions from $\partial B$ to $\partial \Delta$ done by the confined RW up to time $t_N$, and $\mathcal{N}_{\Psi_N}(u_n)$ is the same quantity but for the tilted RI $\mathscr{I}_{\Psi_N}(u_N)$.
	
	We want to apply Theorem \ref{th:soft-local-times} to get a coupling of the ranges of the entrance and exit points $Y,Z$ up to a well-chosen time $n$. By definition, we want to choose $n$ in order to have $n \approx \mathcal{N}(t_N)$ with high probability. According to Propositions \ref{prop:nb-excursions-tN} and \ref{prop:nb-excursions-entrelac}, we take 
	\begin{equation}\label{eq:tps-soft-local-times}
		n = u_N \cpc_\Delta^{\phi_N}(B) = t_N \| \phi_N\|_2^{-2} \cpc_\Delta^{\phi_N}(B) \asymp t_N N^{-1-\gamma} \, ,
	\end{equation}
	where we used the definition of $u_N$ in Theorem \ref{th:couplage-CRW-entrelac-t_N} for the second identity, and Lemma~\ref{lem:encadrement-mes-harmonique} to get the last asymptotics (also using $\| \phi_N\|_2^2 = N^d$).
	
	We will apply Theorem \ref{th:couplage-CdM-diff-pi} with the following parameters:
	\begin{equation}\label{eq:parametres-soft-local-times}
		\begin{split}
			&|\Sigma| = |\partial B| \times |\partial \Delta| \asymp N^{2(d-1)} \, ,\\
			&T^Y_\mix \vee T^Z_\mix \leq c N^{1-\gamma} \quad \text{(Proposition \ref{prop:temps-mixage})}\, ,\\
			&\pi_Y(x,y) = \tilde{e}_\Delta^{\phi_N}(x) \Pbf^N_x(X_{H_\Delta} = y) \sim \pi_Z(x,y) \quad \text{(Lemma \ref{lem:mes-inv-Y} \& \ref{lem:mes-inv-Z})} \, , \\
			&g^Z(x,y) \sim g^Y(x,y) = \tilde{e}_\Delta^{\phi_N}(x) \asymp N^{1-d} \quad \text{(Lemma \ref{lem:encadrement-mes-harmonique})} \, ,\\
			&\mathrm{Var}_{\pi_Y}(\rho_{(x,y)}^Y), \mathrm{Var}_{\pi_Z}(\rho_{(x,y)}^Z) \asymp N^{1-d} N^{-\gamma(d-1)}  \quad \text{(Proposition \ref{prop:variance-1})} \, ,\\
			&\| \rho_{(x,y)}^Y \|_\infty, \| \rho_{(x,y)}^Z \|_\infty \asymp N^{-\gamma(d-1)} \quad \text{(Equations \eqref{eq:rho-upper-bound} \& \eqref{eq:rho-lower-bound})} \, .
		\end{split}
	\end{equation}
	With all these estimates, we are able to control all quantities appearing in Theorem \ref{th:soft-local-times}.
	
	First, we write
	\begin{equation}
		\label{eq:borne-exp-time}
		\exp \left( - n \eps_N^2 \right) \leq \exp \left( - c  \eps_N^2 t_N N^{-1-\gamma}  \right) \leq \exp \left( - c  \eps_N^2 t_N N^{-2}  \right) \leq \exp(-cN^{\delta/2}) \eqdef \delta_{\mathrm{time}}^{\gamma}(N) \, .
	\end{equation}
	
	Then, we want to get a lower bound on the ratio between the invariant and starting distributions, that is $\pi_\circ/\nu_\circ$, where $\circ\in\{Y,Z\}$, which appears in the upper bound of~\eqref{eq:borne-soft-local-times}. Let us first consider $Y$: recalling \eqref{def:nuY}, we have
	\begin{equation*}\label{eq:ratio-init-Y}
		\frac{\pi_Y(x,y)}{\nu_Y(x,y)} = \frac{\tilde{e}_\Delta^{\phi_N}(x) \| \phi_N\|_2^2}{\sum\limits_{z \in D_N} \phi_N^2(z) \Pbf^N_z(X_{R_1} = x)} \geq \frac{\tilde{e}_\Delta^{\phi_N}(x)}{\sup\limits_{z \in D_N} \Pbf^N_z(X_{R_1} = x)} \geq \frac{cN^{1-d}}{N^{-\gamma(d-1)}} = cN^{-(1-\gamma)(d-1)} \, ,
	\end{equation*}
	where we have used Lemma~\ref{lem:encadrement-mes-harmonique} and Proposition \ref{prop:point-retour-B} for the last lower bound.
	For $Z$, recalling \eqref{def:nuZ} and \eqref{eq:erreur-mes-invariante}, we have
	\begin{equation*}
		\label{eq:ratio-init-Z}
		\frac{\pi_Z(x,y)}{\nu_Z(x,y)} \geq (1 - c N^{\gamma - 1}) \frac{\tilde{e}_\Delta^{\phi_N}(x)}{\bar{e}_B^{\Psi_N}(x)} \geq C \, ,
	\end{equation*}
	where we have used Lemma~\ref{lem:encadrement-mes-harm-tilt} and Lemma~\ref{lem:encadrement-mes-harmonique} for the last lower bound.
	We therefore end up with the following bound:
	\begin{equation}\label{eq:borne-exp-mesure-depart}
		\sup_{\circ = Y, Z} \sup_{z \in \Sigma} \, \exp \left( -c n \eps_N \frac{\pi_\circ(z)}{\nu_\circ(z)} \right) \leq \exp \left(- c \, \eps_N\, t_N N^{-2 -(1-\gamma)(d-2)} \right) \eqdef \delta_{\mathrm{init}}^\gamma(N) \, .
	\end{equation}

	Recall the definition of $k(\cdot)$ in~\eqref{eq:softloc-keps}, then since $\pi_\ast$ decays polynomially in $N$, see Lemma~\ref{lem:encadrement-mes-harmonique}, and  $\frac{g^\circ(z)^2}{\mathrm{Var}_{\pi_\circ} (\rho^\circ_z)} \asymp N^{-(1-\gamma)(d-1)}$, see~\eqref{eq:parametres-soft-local-times}, we have
	\begin{equation*}\label{eq:asymp-k_eps}
		k(\eps_N) \leq c \log N - 2 \log \eps_N \leq c' \log \big( N \big) \, .
	\end{equation*}
	Combining this with the  estimates in \eqref{eq:parametres-soft-local-times}, we have the upper bound
	\begin{equation}
		\label{eq:borne-exp-var}
		\sup_{\circ = Y, Z} \sup_{z \in \Sigma} \exp \left( - c \frac{\eps_N^2 g^\circ(z)^2}{\mathrm{Var}_{\pi_\circ} (\rho^\circ_z)} \frac{n}{k(\eps) T^\circ_\mix} \right) \leq \exp \left( - c \eps_N^2 \, t_N \, \frac{N^{-2- (1-\gamma)(d-3) } }{\log \big( N \big)} \right) \eqdef \delta_{\mathrm{Var}}^{\gamma}(N) \, .
	\end{equation}
	Finally, we check the assumptions on $\eps_N$ and $n$ to apply Theorem \ref{th:soft-local-times}. With the estimates in~\eqref{eq:parametres-soft-local-times}, we see that there exists a constant $c_V \in (0,1)$ such that
	\begin{equation}
		\inf_{\circ = Y, Z} \inf_{z \in \Sigma} \frac{\mathrm{Var}_{\pi^\circ} (\rho^\circ_z)}{2 \| \rho^\circ_z \|_\infty g^\circ(z)} \geq c_V \, ,
	\end{equation}
	hence $\eps_N \to 0$ ensures that \eqref{eq:softloc-epsilon} is verified for $\eps_N$ provided $N$ large enough. We also check that
	\begin{equation}
		n \asymp t_N N^{-(1+\gamma)} \gg N^{\delta}N^{1-\gamma} \ge 2 k(\eps_N)  T^Y_\mix \wedge T^Z_\mix.
	\end{equation}
	
	We can now use Theorem \ref{th:soft-local-times}. Recall that we fixed $\delta > 0$ to have $t_N/N^{2+\delta} \to +\infty$, as well as $\eps_N = N^{-\delta/4}$. Let $N$ be sufficiently large so that $\eps_N \leq c_V$ and $n \geq 2 k(\eps_N)  T^Y_\mix \wedge T^Z_\mix$, and define the event
	\begin{equation}\label{eq:event-couplage-pts-entrees-sorties}
		\mathcal{G}^{\delta,\gamma}_{n,\eps_N} \defeq \Bigg\{ \bigcup_{i = 1}^{(1-\eps_N) n} Z_i \subseteq \bigcup_{i = 1}^{n} Y_i \subseteq \bigcup_{i = 1}^{(1+\eps_N) n} Z_i \Bigg\} \, .
	\end{equation}
	Applying Theorem \ref{th:couplage-CdM-diff-pi} with $n$ as in \eqref{eq:tps-soft-local-times} and $\eps=\eps_N$, we get that for any $N$ large enough, there is a coupling of $Y$ and $Z$, denoted by $\QQ_N$, such that
	\begin{equation}\label{eq:borne-couplage-points-3-termes}
		\begin{split}
			\QQ_N \left( (\mathcal{G}^{\delta,\gamma}_{n,\eps_N})^c \right)& \leq C N^{2(d-1)} \Big[ \delta_{\mathrm{time}}^{\gamma}(N) + \delta_{\mathrm{init}}^\gamma(N) + \delta_{\mathrm{Var}}^{\gamma}(N) \Big]  \\
			& \leq C N^{2(d-1)} \Big[ e^{- c N^{\frac12 \delta}} + e^{-c N^{\frac34 \delta - (1-\gamma)(d-2)}} +  e^{-c N^{\frac12 \delta - (1-\gamma)(d-3)}  / \log (N)}  \Big] \,, 
		\end{split}
	\end{equation}
	where we have used the bounds~\eqref{eq:borne-exp-time}-\eqref{eq:borne-exp-mesure-depart}-\eqref{eq:borne-exp-var}, together with $\eps_N = N^{-\delta/4}$ and $t_N \geq N^{2+\delta}$.
	Recall that we fixed $\gamma$ sufficiently close to $1$ so that $(1-\gamma)(d-2) < \delta(d-2)/4(d-2) = \tfrac14 \delta$. Therefore, we end up with $\QQ_N ( (\mathcal{G}^{\delta,\gamma}_{n,\eps_N})^c ) \leq C N^{2(d-1)} \exp( - c N^{\frac18 \delta})$, for $N$ sufficiently large.
	
	%
	This can be reformulated as follows: there exist $\eta > 0$ and constants $c_1,c_2 > 0$ such that for $N$ large enough,
	\begin{equation}\label{eq:proba-couplage-sites}
		\QQ_N \left( \mathset{Z_i}_{i = 1}^{(1-\eps_N) n} \subseteq \mathset{Y_i}_{i = 1}^{n} \subseteq \mathset{Z_i}_{i = 1}^{(1+\eps_N) n} \right) \geq 1 - c_1 e^{-c_2 N^{\eta}} \, ,
	\end{equation}
	where $n = u_N \cpc_\Delta^{\phi_N}(B)$.

	\subsection{Coupling the excursions}
	
	Let us first summarize what we achieved and what is left to do to get Theorem \ref{th:couplage-CRW-entrelac-t_N}.
	
	We defined at the beginning of Section \ref{ssec:chaines-markov} two Markov chains $Y$ and $Z$ on the space state $\Sigma = \partial B \times \partial \Delta$, which correspond to the entrance and exit points of the confined RW and the tilted RI excursions respectively.
	The Markov chain $Y$ is defined with the use of stopping times $(R_i)_{i \geq 1}, (D_i)_{i \geq 1}$ (recall \eqref{eq:def-instants}) for the confined RW, in a way that for all $i \geq 1$, $Y_i = (X_{R_i}, X_{D_i}) \eqdef (Y^R_i, Y^D_i)$. The Markov chain $Z$ is defined by ordering the trajectories and excursions of the tilted RI that hit $B$, and for $j \geq 1$, $Z_j = (w_j(0), w_j(H_{\Delta}))$ where $w_j$ is the $j$-th excursion.
	
	We obtained above in \eqref{eq:proba-couplage-sites} that the ranges of $Y$ and $Z$ roughly coincide up to a small time correction $\eps_N$. Observe that for any $(x,y) \in \Sigma$, the excursions $(X_{R_i}, X_{R_i+1}, \dots , X_{D_i})$ under $\Pbf^N_x( \cdot \, | \, X_i = (x,y))$ -- meaning a confined walk bridge -- have the same law as an excursion of a trajectory of the tilted RI -- meaning a tilted RW trajectory -- conditioned to enter $B$ at $x$ and exit through $\partial \Delta$ at $y$ . Therefore, if we know that $Y_i = Z_j$ for some $i,j \in \mathset{0, \dots, n}$, our coupling consists of taking the same path for $(X_{R_i}, X_{R_i+1}, \dots , X_{D_i})$ and for $w_j = (w_j(0), w_j(1), \dots , w(H_\Delta))$.
	
	In all the following, we work conditionally on the entrance and exit sites $Y$ and $Z$.
	We first explain how to construct two sets of excursions $(\mathcal{E}^{\mathrm{in}}_i)_{i \geq 1}$ and $(\mathcal{E}^{\mathrm{out}}_i)_{i \geq 0}$ of the confined RW. Write $Y_i = (Y_i^R, Y_i^D)$, then under $\QQ_N$ the excursions set satisfies the following properties:
	\begin{itemize}
		\item Conditionally on the values of $Y$ and $Z$, the collections $(\mathcal{E}^{\mathrm{in}}_i)$ and $(\mathcal{E}^{\mathrm{out}}_i)$ are independent sequences of independent variables.
		
		\item For any $i \geq 1$, the random variable $\mathcal{E}^{\mathrm{in}}_i$ is a nearest-neighbour path with law
		\[ \QQ_N (\mathcal{E}^{\mathrm{in}}_i \in \cdot \, | \, Y,Z) = \Pbf^N_{Y^R_i} \left( \mathcal{E}^{\mathrm{in}}_i \in \cdot \, | \, X_{H_\Delta} = Y^D_i \right) \]
		that is $\mathcal{E}^{\mathrm{in}}_i$ has the law of a confined RW path starting at $Y^R_i$ and ending at $Y^D_i$ when it first hits $\Delta$.
		
		\item For any $i \geq 1$, the random variable $\mathcal{E}^{\mathrm{out}}_i$ is a nearest-neighbour path with law
		\[ \QQ_N (\mathcal{E}^{\mathrm{out}}_i \in \cdot \, | \, Y,Z) = \Pbf^N_{Y^D_i} \left( \mathcal{E}^{\mathrm{out}}_i \in \cdot \, | \, X_{H_B} = Y^R_{i+1} \right) \, , \]
		that is $\mathcal{E}^{\mathrm{out}}_i$ has the law of a confined RW path starting at $Y^D_i$ that runs until going back to~$B$, with hitting site $Y^D_i$.
		
		\item The random variable $\mathcal{E}^{\mathrm{out}}_0$ is a nearest-neighbour path with law
		\[ \QQ_N (\mathcal{E}^{\mathrm{out}}_0 \in \cdot \, | \, Y,Z) = \Pbf^N_{\phi_N^2} \left( \mathcal{E}^{\mathrm{out}}_0  \in \cdot \, | \, X_{R_1} = Y^R_{1} \right) \, , \]
		that is $\mathcal{E}^{\mathrm{out}}_0$ is a confined RW path starting from distribution $\phi_N^2$ that ends when it visits~$B$ for the first time after leaving $\Delta$, with first hitting site $Y^R_{1}$.
	\end{itemize}
	
	With this construction, we have for all $i \geq 1$,
	\[ \mathcal{E}^{\mathrm{in}}_i \overset{(d)}{=} (X_{R_i}, X_{R_i+1}, \dots , X_{D_i}) \quad , \quad \mathcal{E}^{\mathrm{out}}_i \overset{(d)}{=} (X_{D_i}, X_{D_i+1}, \dots , X_{R_i}) \,.\]
	
	
	The process $X$ defined on $(\Omega_N, \mathscr{F}_N, \QQ_N)$ as the concatenation of $\mathcal{E}^{\mathrm{out}}_0, \mathcal{E}^{\mathrm{in}}_1, \mathcal{E}^{\mathrm{out}}_1, \mathcal{E}^{\mathrm{in}}_2, \dots$ therefore has the law of the confined RW started from its stationary distribution $\phi_N^2$. With this construction, we have (recall the definition of $\mathcal{N}$ above Proposition \ref{prop:nb-excursions-tN})
	\begin{equation}
		\mathcal{N}(t) = \sup \mathset{p \geq 1 \, : \, \ell(\mathcal{E}^{\mathrm{out}}_0) + \sum_{i = 1}^p (\ell(\mathcal{E}^{\mathrm{in}}_i) + \ell(\mathcal{E}^{\mathrm{out}}_i)) \leq t} \, ,
	\end{equation}
	where $\ell(\gamma)$ is the length of the nearest-neighbour path $\gamma = (\gamma_0, \dots , \gamma_{\ell(\gamma)})$. Similarly, the range $\mathcal{R}_{\phi_N}( t )$ of the confined RW up to time $t$ is given by the process $X$ after the concatenation of the $\mathcal{N}(t)$ first excursions with the start of the $(\mathcal{N}(t)+1)$-th excursion which we stop at the appropriate time.
	
	Since it is possible to have $\mathcal{E}^{\mathrm{out}}_0 \cap B \neq \varnothing$, we need to consider this first path as a part of an excursion of the confined RW. 
	To do so, we fix $\beta > 0$ and we note that since $X$ is stationary, we have the equality in law 
	\[ \mathcal{R}_{\phi_N}( t ) \cap B \overset{(d)}{=}  \mathset{X_i \, : \, \beta t \leq i \leq (1+\beta) t} \cap B \, \]
	under $\QQ_N$. Therefore, we have
	\begin{equation}\label{eq:range-decale-beta}
		\bigcup_{i = \mathcal{N}(\beta t) + 1}^{\mathcal{N}((1+\beta)t)-1} (\mathcal{E}^{\mathrm{in}}_i \cap B) \subseteq \mathcal{R}_{\phi_N}( t ) \cap B \subseteq \bigcup_{i = \mathcal{N}(\beta t)}^{\mathcal{N}((1+\beta)t)} (\mathcal{E}^{\mathrm{in}}_i \cap B) \, .
	\end{equation}
	In short, by shifting the times we consider (using $\beta > 0$), we can have $\mathcal{E}^{\mathrm{out}}_0 \cap B \subseteq \mathcal{E}^{\mathrm{in}}_{\mathcal{N}(\beta t)}$. Since $\mathcal{N}(\beta t)$ is the index of the last excursion before time $\beta t$, this means that the true range $\mathcal{R}_{\phi_N}(t)$ contains a range that starts later (after it exits through $\Delta$), and is contained in a range that started earlier in time (that hit $B$ before).
	

	To construct the range of the tilted RI $\mathscr{I}_{\Psi_N}(u_N)$ intersected with $B$, we proceed inductively by plugging excursions $(\mathcal{E}^{\mathrm{in}}_i)$ of the tilted/confined RW and regrouping them into trajectories.
	To do so, let $\mathcal{I}_0 = \varnothing$ and recursively define
	\begin{equation}
		\iota_i \defeq \inf \mathset{j \geq 1 \, : \, j \not\in \mathcal{I}_{i-1} , Y_j = Z_i} \, , \quad \mathcal{E}^{\mathrm{RI}}_i = \mathcal{E}^{\mathrm{in}}_{\iota_i} \, , \quad \mathcal{I}_i = \mathcal{I}_{i-1} \cup \mathset{\iota_i} \, .
	\end{equation}
	Simply put, we partially order the set $\mathset{Z_i}_{i \geq 1}$ by comparing it to the set $\mathset{Y_i}_{i \geq 1}$. The excursions $(\mathcal{E}^{\mathrm{RI}}_i)_{i \geq 0}$ of the tilted RI are then given by the corresponding excursion of the confined RW.
	
	To construct the full tilted RI from this collection of excursions, we also define a sequence of Bernoulli variables $(U_i)$, that are independent conditionally on $(Z_i)_{i \geq 0}$, and such that
	\begin{equation}
		\QQ_N(U_i = 1 | (Z_i)_{i\ge 0}) = \Pbf^{\Psi_N}_{Z^D_i} (H_B = +\infty) \bar{e}^T_B(Z^R_{i+1}) \, / \, p^Z(Z_i, (Z^R_{i+1}, \partial \Delta)) \, ,
	\end{equation}
	which is the probability for the $\Psi_N$-tilted RW starting at $Z^D_i$ to go to infinity without returning to $B$, and then for another $\Psi_N$-tilted walk ``coming from infinity'' to hit~$B$ at $Z^R_{i+1}$. Setting $V_0 = 0$ and $V_i = \inf \mathset{i > V_{i-1} \, : \, U_i = 1}$, we can group the excursions $(\mathcal{E}^{\mathrm{RI}}_i)_{i \geq 1}$ into $(\mathcal{E}^{\mathrm{RI}}_i)_{V_{j-1} < i \leq V_j}$ for $j \geq 1$. The $j$-th group corresponds to excursions of the $j$-th trajectory $w^j$, and the full tilted RI can be constructed by plugging bridges and trajectories of the walk on the conductances $\Psi_N$, both conditioned to never hit $B$.
	
	Let $(J^N_u)_{u \geq 0}$ be a Poisson process on the real line with intensity parameter $\cpc^{\Psi_N}(B)$, independent of all the previous randomness. Taking the previously fixed $\beta > 0$, we then have $\mathcal{N}_{\Psi_N}(u) = V_{J^N_u}$ and the equality in law
	\begin{equation}
		\mathscr{I}_{\Psi_N}(u) \cap B \overset{(d)}{=} \bigcup_{ j = J_{\beta u}^N}^{J_{(1+\beta)u}^N} \bigcup_{V_{j-1} < i \leq V_j} (\mathcal{E}^{\mathrm{RI}}_i \cap B) = \bigcup_{i = \mathcal{N}_{\Psi_N}(\beta u)}^{\mathcal{N}_{\Psi_N}((1+\beta) u)} (\mathcal{E}^{\mathrm{RI}}_i \cap B) \, .
	\end{equation}
	
	The key property of this construction is the following: for $p, q, m \in \NN$, we have
	\begin{equation}\label{eq:inclusion-points-inclusion-excursions}
		\mathset{Z_i}_{i = 1}^{p} \subseteq \mathset{Y_i}_{i = 1}^{m} \subseteq \mathset{Z_i}_{i = 1}^{q} \Longrightarrow \big\{\mathcal{E}^{\mathrm{RI}}_i\big\}_{i = 1}^{p} \subseteq \big\{\mathcal{E}^{\mathrm{in}}_i\big\}_{i = 1}^{m} \subseteq \big\{\mathcal{E}^{\mathrm{RI}}_i\big\}_{i = 1}^{q} \, ,
	\end{equation}
	which is how we use the previous section to prove Theorem \ref{th:couplage-CRW-entrelac-t_N}.
	
	\begin{proof}[Proof of Theorem \ref{th:couplage-CRW-entrelac-t_N}]
		Recall that $u_N = t_N / \lambda_N \| \phi_N\|_2^2$ and let us write $\mathcal{E}^{\mathrm{in}}_{B,i} \defeq \mathcal{E}^{\mathrm{in}}_i \cap B$ as well as $\mathcal{E}^{\mathrm{RI}}_{B,i} \defeq \mathcal{E}^{\mathrm{RI}}_i \cap B$. On the probability space $(\Omega_N, \mathscr{F}_N, \QQ_N)$, by Proposition \ref{prop:nb-excursions-tN}, with probability at least $1 - C e^{-c' N^c}$, we have
		\begin{equation*}
			\bigcup_{i = \beta (u_N + \tfrac{\eps_N}{4}) \cpc_\Delta^\phi(B)}^{(1+\beta)(u_N - \tfrac{\eps_N}{4}) \cpc_\Delta^\phi(B)} \mathcal{E}^{\mathrm{in}}_{B,i} \eqdef \mathcal{R}_1 \subseteq \mathcal{R}_{\phi_N}(t_N) \cap B \subseteq  \mathcal{R}_2 \defeq \bigcup_{i =  \beta (u_N - \tfrac{\eps_N}{4}) \cpc_\Delta^\phi(B)}^{(1+\beta)(u_N + \tfrac{\eps_N}{4}) \cpc_\Delta^\phi(B)} \mathcal{E}^{\mathrm{in}}_{B,i}
		\end{equation*}
		
		With Proposition \ref{prop:nb-excursions-entrelac}, noting that $u_N \asymp t_N N^{-d}$ and recalling that $\eps_N^2 t_N \geq N^{2+\frac12\delta}$, with probability at least $1 - C e^{-c N^{\frac12 \delta + 1-\gamma}}$, we have
		\begin{equation*}
			\mathscr{I}_{\Psi_N}((1-\tfrac{\eps_N}{2})u_N) \cap B_N \subseteq \mathcal{R}^{\mathrm{RI}}_1 \defeq  \bigcup_{i = \beta (u_N + \tfrac{\eps_N}{3}) \cpc_\Delta^\phi(B)}^{(1+\beta)(u_N - \tfrac{\eps_N}{3}) \cpc_\Delta^\phi(B)} \mathcal{E}^{\mathrm{RI}}_{B,i} 
		\end{equation*}
		and 
		\begin{equation*}
			\bigcup_{i = \beta (u_N - \tfrac{\eps_N}{3}) \cpc_\Delta^\phi(B)}^{(1+\beta)(u_N + \tfrac{\eps_N}{3}) \cpc_\Delta^\phi(B)} \mathcal{E}^{\mathrm{RI}}_{B,i}  \eqdef \mathcal{R}^{\mathrm{RI}}_2 \subseteq \mathscr{I}_{\Psi_N}((1+\tfrac{\eps_N}{2})u_N) \cap B_N \, .
		\end{equation*}
		Finally, using \eqref{eq:proba-couplage-sites} and \eqref{eq:inclusion-points-inclusion-excursions} and taking $\beta$ small enough, with $\QQ_N$-probability at least $1 - c_1 e^{-c_2 N^\eta}$, we have the inclusions $\mathcal{R}^{\mathrm{RI}}_1\subseteq \mathcal{R}_1\subseteq \mathcal{R}_2 \subseteq \mathcal{R}^{\mathrm{RI}}_2$.
		Therefore, combining all of the above concludes the proof of Theorem~\ref{th:couplage-CRW-entrelac-t_N}.
	\end{proof}

	\begin{appendix}
		
		\section{Some Simple Random Walk (gambler's ruin) estimates}\label{appendix:gambler-ruin}
		
In this appendix, we prove Lemma~\ref{lem:gambler-ruin} and Claim~\ref{claim:esp-tps-retour-bord-anneau}.
For convenience, we recall the statements.

	\begin{customlemma}{\ref{lem:gambler-ruin}}
		Let $z \in (B^\eps \setminus B) \cup \partial B$ and $w \in B^\eps \setminus B$ be such that there are $\iota, \jmath \in [0,1)$ and $\eta > 0$ such that
		\[ |z - x_0^N| - \alpha N \in [\eta, \tfrac{1}{\eta}] N^\iota \quad , \quad |w - x_0^N| - (\alpha + \eps) N \in -[\eta, \tfrac{1}{\eta}] N^\jmath \, . \]
		Then, there exist explicit constants that depend only on $\alpha, \eps, d$ such that for $N$ large enough,
		\begin{equation}\label{eq:proba-atteindre-avant-SRW-appendix}
			\begin{split}
				c_1 \tfrac{\eta}{2} N^{\iota-1} + \grdO(N^{-1}) \leq \, &\Pbf_z(H_{B^\eps} < \bar{H}_B) \leq c_2 \tfrac{2}{\eta} N^{\iota-1} + \grdO(N^{-1}) \, , \\
				c_1' \tfrac{\eta}{2} N^{\jmath-1}  + \grdO(N^{-1}) \leq \, &\Pbf_w(H_B < \bar{H}_{\partial B^\eps}) \leq c_2' \tfrac{2}{\eta} N^{\jmath-1}  + \grdO(N^{-1}) \, .
			\end{split}
		\end{equation}
	\end{customlemma}
		
		\begin{proof}
			Proposition 1.5.10 in \cite{lawler2013intersections} gives the following estimate: let $x \in B^\eps \setminus B$, then
			\begin{equation}
				\Pbf_x(H_B < \bar{H}_{\partial B^\eps}) = \frac{|x|^{2-d} - [(\alpha + \eps)N]^{2-d} + \grdO(N^{1-d})}{[\alpha N]^{2-d} - [(\alpha + \eps)N]^{2-d}} \, .
			\end{equation}
			Inject $x = z$ where $|z - x_0^N| - \alpha N \in [\eta, \tfrac{1}{\eta}] N^\iota$, then simplifying by $N^{2-d}$ yields 
	\begin{multline*}
	\frac{(\alpha + \tfrac{1}{\eta} N^{\iota - 1})^{2-d} - (\alpha + \eps)^{2-d} + \grdO(N^{-1})}{\alpha^{2-d} - (\alpha + \eps)^{2-d}}  \leq  \Pbf_x(H_B < \bar{H}_{\partial B^\eps}) \\
	\leq \frac{(\alpha + \eta N^{\iota - 1})^{2-d} - (\alpha + \eps)^{2-d} + \grdO(N^{-1})}{\alpha^{2-d} - (\alpha + \eps)^{2-d}} \, .
	\end{multline*}
			Since $\iota - 1 < 0$, we easily rewrite
			\[ 
			\begin{split} 
			(\alpha + \eta N^{\iota - 1})^{2-d} = \left[ \frac{\alpha^{-1}}{1 + \tfrac{\eta}{\alpha} N^{\iota - 1}} \right]^{d-2}& = \alpha^{2-d} \left[ 1 - \tfrac{\eta}{\alpha} N^{\iota - 1}(1 + \bar{o}(1)) \right]^{d-2} \\
			& = \alpha^{2-d} - (d-2)\alpha^{2-d} \tfrac{\eta}{\alpha} N^{\iota - 1}(1 + \bar{o}(1)) \, .
			\end{split} \]
			Hence, we get the upper bound
			\[ \Pbf_x(H_B < \bar{H}_{\partial B^\eps}) \leq 1 - \frac{(d-2)\alpha^{2-d} \tfrac{\eta}{\alpha}}{\alpha^{2-d} - (\alpha + \eps)^{2-d}} N^{\iota - 1}(1 + \bar{o}(1)) + \grdO(N^{-1}) \, , \]
from which we deduce the first upper bound in \eqref{eq:proba-atteindre-avant-SRW-appendix}. The lower bound is deduced with the same expansion.
		
Similarly, if $|w - x_0^N| - (\alpha + \eps) N \in -[\eta, \tfrac{1}{\eta}] N^\jmath$, then		
\[ 
|w|^{2-d} - [(\alpha + \eps)N]^{2-d} \leq (\alpha + \eps - \eta N^{\jmath - 1})^{2-d} = (\alpha + \eps)^{2-d} \left[ 1 + \frac{d-2}{\alpha + \eps} \eta N^{\jmath - 1}(1+\bar{o}(1)) \right] 
\]
and similarly for the lower bound, hence the lemma.
\end{proof}
		
	\begin{customclaim}{\ref{claim:esp-tps-retour-bord-anneau}}
	For all $\delta \in (0,\eps)$, there is a $c_\delta > 0$ such that
			\[
			\sup_{z \in \partial B} \probaRW{z}{\bar{H}_B \wedge H_{\partial B^\delta} > (\delta N)^2} \leq \frac{c_\delta}{N} \, , \qquad \sup_{w \in \partial B^\delta} \probaRW{w}{H_B \wedge \bar{H}_{\partial B^\delta} > (\delta N)^2 } \leq \frac{c_\delta}{N} \,.
			\]
	\end{customclaim}
	
	\begin{proof}
		For this proof, we can consider the ball $B$ to be centered at $x_0^N = 0$ since we are only interested in the simple random walk.
		Write $\tau_\delta \defeq \bar{H}_B \wedge H_{\partial B^\delta}$ and take $z \in \partial B$. Consider $x$ such that $x \not\in B$ and $x \sim z$. Using the martingale $|X_{t \wedge {\tau_\delta}}|^2 - t \wedge \tau_\delta$, we get
		\begin{equation}
			|x|^2 = \espRW{x}{|X_{t \wedge \tau_\delta}|^2 - t \wedge \tau_\delta} \xrightarrow[]{t \uparrow \tau_\delta} \espRW{x}{|X_{\tau_\delta}|^2 - {\tau_\delta}} \, ,
		\end{equation}
		where we used dominated and monotonous convergence. Splitting the last expectation according to whether $X_{\tau_\delta} \in B$ or not,	we have
		\[ |x|^2 = \probaRW{x}{X_{\tau_\delta} \in B} \espRW{x}{|X_{\tau_\delta}|^2 - \tau_\delta \, \big| \, X_{\tau_\delta} \in B} + \probaRW{x}{X_{\tau_\delta} \not\in B} \espRW{x}{|X_{\tau_\delta}|^2 - \tau_\delta \, \big| \, X_{\tau_\delta} \not\in B} \, . \]
		Rearranging the terms,
		\[ \begin{split}
			\probaRW{x}{X_{\tau_\delta} \in B} \espRW{z}{\tau_\delta \, \big| \, X_{\tau_\delta} \in B} = &\, \probaRW{x}{X_{\tau_\delta} \in B} \espRW{z}{|X_{\tau_\delta}|^2 - |x|^2 \, \big| \, X_{\tau_\delta} \in B} \\
			&+ \probaRW{x}{X_{\tau_\delta} \not\in B} \espRW{z}{|X_{\tau_\delta}|^2 - \tau_\delta  - |x|^2 \, \big| \, X_{\tau_\delta} \not\in B} \, .
		\end{split} \]
		Since we took $x \not\in B$, on the event $\mathset{X_{\tau_\delta} \in B}$ we have $|X_{\tau_\delta}|^2 \leq |x|^2$, and thus we can bound $\espRW{z}{|X_{\tau_\delta}|^2 - |x|^2 \, | \, X_{\tau_\delta} \in B} \leq 0$. Also using that $\tau_\delta \geq 0$, we get
		\[ \probaRW{x}{X_{\tau_\delta} \in B} \espRW{z}{\tau_\delta \, \big| \, X_{\tau_\delta} \in B} \leq \probaRW{x}{X_{\tau_\delta} \not\in B} \espRW{z}{|X_{\tau_\delta}|^2 - |x|^2 \, \big| \, X_{\tau_\delta} \not\in B} \, . \]
		Note that on $\mathset{X_{\tau_\delta} \not\in B}$, we have $|X_{\tau_\delta}|^2 - |x|^2 \leq 2 \alpha \delta N^2$. Since $\probaRW{x}{X_{\tau_\delta} \in B} \geq \probaRW{x}{X_1 = z} = 1/2d$, we get
		\begin{equation}\label{eq:esp-cond-sortie-anneau}
			\espRW{x}{\tau_\delta \, | \, X_{\tau_\delta} \in B} \leq 4 \alpha d \delta N^2 \probaRW{z}{X_{\tau_\delta} \not\in B} \leq c_\delta' N \, ,
		\end{equation}
		where we have used \cite[Lem. 6.4.3]{lawlerRandomWalkModern2010} for the last inequality.
		By the Markov 
		inequality and decomposing on the possible sites $x$, we get the claim for $z \in \partial B$.
		With a similar proof, we also get the result in \eqref{claim:esp-tps-retour-bord-anneau} for $w \in \partial B^\delta$.
	\end{proof}	
	
	\begin{remark}\label{rem:adapt-pt-depart-Delta}
		To get a version of this claim where the starting point $z$ is not in $\partial B$ but instead is such that $|z-x_0^N| - \alpha N \in [\eta, \tfrac{1}{\eta}] N^\gamma$ for some $\eta > 0$, we see that the proof of Claim \ref{claim:esp-tps-retour-bord-anneau} is still true up until \eqref{eq:esp-cond-sortie-anneau}, in which the bound $\probaRW{z}{X_{\tau_\delta} \not\in B} \leq c_\delta' /N$ must be replaced by $\probaRW{z}{X_{\tau_\delta} \not\in B} \leq c_\delta' N^{\gamma-1}$. Therefore, we get a bound $\espRW{z}{\tau_\delta \, | \, X_{\tau_\delta} \in B} \leq c_\delta'' N^{\gamma}$, which by the Markov inequality yields the bound $N^{\gamma-1}$ that was announced in the proof of Lemma \ref{lem:toucher-B-avant-Beps-start-Delta}.
	\end{remark}
		
		\section{On eigenvalues and eigenvectors}\label{sec:appendix:fonctions-propres}
		
			The main goal of this section is to summarize results on $\phi_N$ which are proven in \cite{vecteurpropre} (or easily deduced from this paper). We will also prove some estimates that we use in the proof of Proposition \ref{prop:nb-excursions-tN}.

	\paragraph*{Some preliminaries.}			
		
		Consider a compact set $D \subset \RR^d$ and a mesh parameter $h > 0$. On $D^{(h)} \defeq D \cap h \ZZ^d$, we define the discrete Laplacian so that for any $x \in D^{(h)} \setminus \partial D_h$, any $v^{(h)} : D^{(h)} \longrightarrow \RR_+$,
		\[ \Delta^{(h)} v^{(h)}(x) \defeq h^{-2} \sum_{e \in \ZZ^d, |e|=1} \Big( v^{(h)}(x+he) - v^{(h)}(x) \Big) =  \frac{2d}{h^2} \Delta_d \tilde{v}^{(h)}(\tfrac{1}{h} x) \, , \]
		where $\tilde{v}^{(h)}(x) = v^{(h)}(hx)$. On $D^{(h)}$, we may consider the discrete Dirichlet problem
		\[ \begin{cases} &\Delta^{(h)} v - \mu^{(h)} v = 0 \text{ on } D^{(h)} \setminus \partial D^{(h)} \, ; \\ &\forall x \in \partial D^{(h)}, v(x) = 0 \, . \end{cases} \]
		Write $(\mu^{(h)}_k,\phi^{(h)}_k)_{k\geq 1}$ for the (ordered) eigenvalues and $\ell^1$-normalized eigenvectors that solve this discrete Dirichlet problem. 
		We also consider the Dirichlet problem on $D$, with the usual Laplace--Beltrami operator $\Delta$, with $L^1$-normalized (ordered) solutions $(\mu_k, \varphi_k)_{k\geq 1}$.
		The Rayleigh-Ritz method gives that $\mu^{(h)}_k = \mu_k (1 + \bar{o}(1))$ as $h \to 0$, for a wide range of domains $D$ (see \cite{pjm/1103040107} for more details).
		
		Consider now the transition matrix $P_N$ of the simple random walk killed on the boundary of $D_N = (N D) \cap \ZZ^d$. Consider now $x = x_* N \in D_N \setminus \partial D_N$ and recall the previous notation, then
		\begin{equation}\label{eq:reecriture-PN}
		\begin{split}
			P_N \tilde{v}^{(1/N)}(x) = \frac{1}{2d} \sum_{y \sim x} \tilde{v}^{(1/N)}(x)(y) & = \tilde{v}^{(1/N)}(x)(x) + \frac{1}{2d} \sum_{|e| = 1} \Big( \tilde{v}^{(1/N)}(x + e) - \tilde{v}^{(1/N)}(x) \Big)  \\
			& = \Big( \mathrm{Id}_N + \frac{1}{2dN^2} \Delta^{(1/N)} \Big) v^{(1/N)}(x_*) \, .
			\end{split}
		\end{equation}
		Since every function $f : D_N \longrightarrow \RR$ can be expressed as some $\tilde{v}^{(1/N)}$, this proves that $P_N$ and $\Delta^{(1/N)}$ have the same eigenvectors.
		Write $(\lambda_N^k,\phi_N^k)$ for the (ordered) eigenvalues and eigenvectors of this transition matrix; we also write $\lambda_N \defeq \lambda_N^1$ and $\phi_N \defeq \phi_N^1$. Using \eqref{eq:reecriture-PN} first, and then $\mu^{(1/N)}_k = \mu_k (1 + \bar{o}(1))$ as mentioned above, we get the following asymptotics for $\lambda_N^{k}$:
		\begin{equation}\label{eq:vp-blowup}
			\lambda_N^k = 1 - \frac{\mu_k^{(1/N)}}{2d N^2} = 1 - \frac{\mu_k}{2d N^2} + \bar{o}(N^{-2}) \, .
		\end{equation} 
		
	Additionally to the convergence of the eigenvalues, there is also a convergence for the first eigenvector $\phi_N$. Recall that $\varphi_1$ is the first eigenfunction of the Laplace operator $\Delta$ on the original domain $D$, which is considered to be $L^1$-normalized. We define on $D_N$ the function $\varphi_N = \varphi_1(\cdot / N)$.
	Recall that $\phi_N$ is the first eigenvector of the matrix $P_N$, that we normalize by $\| \phi_N \|_1 = N^d$. Then, \cite[Corollary 1.10]{vecteurpropre} states that $\phi_N$ is uniformly close to $\varphi_N$. More precisely,
	there is a constant $c > 0$ (that only depends on~$D$) such that for all $N$ large enough,
	\begin{equation}\label{eq:cv-phiN-supremum}
		\sup_{x \in D_N} \big| \phi_N(x) - \varphi_N(x) \big| \leq c  N^{-1/2(d+1)} \, .
	\end{equation}
	In the bulk, this can be refined to have a convergence of the ratios to $1$. For any $\eta > 0$, define $D_N^{-\eta}\defeq \{x\in D_N, d(x,\partial D_N)  > \eta N\}$.
	Then we have
	\begin{equation}\label{eq:cv-phi_N-ratio}
		\sup_{x\in D_N^{-\eta}} \bigg| \frac{\phi_N(x)}{\varphi_N(x)} -1 \bigg| \xrightarrow{N\to\infty} 0\,.
	\end{equation}

	\paragraph*{Some other useful estimates.} 
		We complement the results by some observations that are used in the proof of Proposition \ref{prop:nb-excursions-tN}. First of all, we state a result that tells that, provided that the time is large enough, the first eigencouple gives the probability for the SRW to stay in~$D_N$ when starting from a given point in~$D_N$. The following statement can be easily proven with the methods of the proof of \cite[Lemma 3.10]{dingDistributionRandomWalk2021a} and the fact that $\| \phi_N \| \asymp \| \phi_N\|_2^2$ (which is a consequence of \eqref{eq:phiN-borne}).
		
		\begin{proposition}\label{prop:equivalent-proba-sortie-eigen}
			Let $\tau$ be the exit time of $D_N$, then for any sequence $t_N \gg N^2 \log N$ and any $x \in D_N$, we have
			\begin{equation}
				\Pbf_x(\tau > t_N) = \phi_N(x) (\lambda_N)^{t_N} \frac{\| \phi_N \|}{\| \phi_N\|_2^2}(1 + \grdO(N^{-2})) \, .
			\end{equation}
			In particular, there is a positive constant $c > 0$ such that for any $x \in D_N$ with $N$ large enough, we have
			\begin{equation}\label{rem:encadrement-proba-sortie-eigen}
				c \phi_N(x) (\lambda_N)^{t_N} \leq \Pbf_x(\tau > t_N) \leq \tfrac{1}{c} \phi_N(x) (\lambda_N)^{t_N} \, .
			\end{equation}
		\end{proposition}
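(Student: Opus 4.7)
The natural approach is through the spectral decomposition of the symmetric killed transition kernel $P_N$. Let $(\lambda_N^k,\varphi_N^k)_{k\geq 1}$ denote its eigenpairs with $\varphi_N^k$ orthonormal in $\ell^2(D_N)$, ordered so that $\lambda_N^1 = \lambda_N$ and $\varphi_N^1 = \phi_N/\|\phi_N\|_2$; write $\beta_N := \lambda_N^2$ for the second eigenvalue. Then
\[
\Pbf_x(\tau > t_N) = P_N^{t_N}\mathbf{1}(x) = \sum_{k\geq 1} (\lambda_N^k)^{t_N} \varphi_N^k(x) \langle \varphi_N^k,\mathbf{1}\rangle \,,
\]
whose $k=1$ term, after restoring the normalization $\phi_N(x_0^N) = 1$, equals precisely $\phi_N(x)(\lambda_N)^{t_N}\|\phi_N\|_1/\|\phi_N\|_2^2$. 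The task reduces to showing that the remainder $R(x) := \sum_{k \geq 2} (\lambda_N^k)^{t_N} \varphi_N^k(x) c_k$, with $c_k := \langle \varphi_N^k,\mathbf{1}\rangle$, is a relative $\grdO(N^{-2})$ correction uniformly in $x \in D_N$.

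The key input is the spectral gap: by the blow-up asymptotics \eqref{eq:vp-blowup}, and after passing to $P_N^2$ to circumvent the negative eigenvalues coming from the bipartite structure of $\mathbb{Z}^d$, one has $(\lambda_N^k/\lambda_N)^2 \leq 1 - c N^{-2}$ for all $k \geq 2$. Applying Cauchy--Schwarz with a split $t_N = T_1 + T_2$ gives
\[
|R(x)|^2 \leq P_N^{2T_1}(x,x) \cdot \sum_{k\geq 2} (\lambda_N^k)^{2T_2} c_k^2 \leq P_N^{2T_1}(x,x) \cdot \beta_N^{2T_2}\, \|\mathbf{1}\|_2^2 \,,
\]
which, combined with the heat kernel bound $P_N^{2T_1}(x,x) \leq C T_1^{-d/2}$, the trivial $\|\mathbf{1}\|_2^2 \leq C N^d$, and the symmetric choice $T_1 = T_2 = t_N/2$, yields $|R(x)| \leq C N^{d/2} t_N^{-d/4} \beta_N^{t_N/2}$.

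The main obstacle is the uniformity in $x$: after dividing by the main term of size $\asymp \phi_N(x)(\lambda_N)^{t_N}$ (using $\|\phi_N\|_1/\|\phi_N\|_2^2 \asymp 1$ from Lemma~\ref{lem:norme-phiN2}), the relative error is bounded by $\phi_N(x)^{-1}(\beta_N/\lambda_N)^{t_N/2} N^{d/2} t_N^{-d/4}$, which decays as a large power of $N$ whenever $\phi_N(x)$ is bounded away from $0$ --- this is the case in the bulk $D_N^{-\eta}$ thanks to Proposition~\ref{prop:encadrement-ratio-fort}, provided $t_N \gg N^2 \log N$. To extend the formula to general $x$ up to the boundary, I would apply the Markov property at an intermediate time $s \asymp N^2 \log N$:
\[
\Pbf_x(\tau > t_N) = \sum_y P_N^s(x,y)\, \Pbf_y(\tau > t_N - s) \,,
\]
substitute the bulk formula for $y \in D_N^{-\eta}$, and use the eigenfunction identity $\sum_y P_N^s(x,y)\phi_N(y) = (\lambda_N)^s \phi_N(x)$ to reassemble the correct main term for arbitrary starting point~$x$. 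The contribution from $y \notin D_N^{-\eta}$ would be absorbed into the error using that the survival probability up to time $s$ while remaining near the boundary is exponentially small in $s/N^2$. The second bound~\eqref{rem:encadrement-proba-sortie-eigen} then follows immediately, since $\|\phi_N\|_1/\|\phi_N\|_2^2$ is bounded between two positive constants by Lemma~\ref{lem:norme-phiN2}.
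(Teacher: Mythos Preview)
The paper does not actually give its own proof of this proposition; it only refers to the methods of \cite[Lemma~3.10]{dingDistributionRandomWalk2021a} together with $\|\phi_N\|_1 \asymp \|\phi_N\|_2^2$. Your spectral-decomposition approach is the natural one and is presumably close to that reference, but there is a genuine gap in the way you handle the bipartite structure.

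Because $\ZZ^d$ is bipartite, the involution $f \mapsto f\cdot\epsilon$ with $\epsilon(x)=(-1)^{x_1+\cdots+x_d}$ conjugates $P_N$ into $-P_N$; in particular $\phi_N\epsilon$ is an eigenfunction of $P_N$ with eigenvalue $-\lambda_N$. Hence $(\lambda_N^k/\lambda_N)^2 = 1$ for that index, and your claimed inequality $(\lambda_N^k/\lambda_N)^2 \leq 1 - cN^{-2}$ for all $k\geq 2$ is false. ``Passing to $P_N^2$'' does not fix this: $P_N^2$ has a doubly degenerate top eigenvalue $\lambda_N^2$, with eigenspace spanned by $\phi_N$ and $\phi_N\epsilon$. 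The $-\lambda_N$ contribution to $\Pbf_x(\tau>t_N)$ equals
\[
(-1)^{t_N}\epsilon(x)\,\phi_N(x)\,(\lambda_N)^{t_N}\, \frac{S}{\|\phi_N\|_2^2}\,,\qquad S := \sum_{y\in D_N} \phi_N(y)\,\epsilon(y)\,,
\]
so to get the stated $\grdO(N^{-2})$ relative error you must prove $|S|/\|\phi_N\|_1 = \grdO(N^{-2})$ separately --- this is an oscillatory-sum estimate (discrete regularity of $\phi_N$) that does not follow from the spectral gap and is a missing ingredient in your plan. For the weaker two-sided bound~\eqref{rem:encadrement-proba-sortie-eigen}, which is all the paper ever uses, it suffices that $|S|/\|\phi_N\|_1$ be bounded away from~$1$, which is much easier.

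A secondary issue: with the symmetric split $T_1=T_2=t_N/2$ your Cauchy--Schwarz bound compares $\beta_N^{t_N/2}$ against the main term $\lambda_N^{t_N}$, i.e.\ involves $(\beta_N/\lambda_N^2)^{t_N/2}$. This need not decay --- it blows up whenever the continuous Dirichlet eigenvalues satisfy $\mu_2 < 2\mu_1$, which happens for instance for dumbbell-like domains. Taking instead $T_1\asymp N^2$ (just enough for the heat-kernel bound $P_N^{2T_1}(x,x)\leq CN^{-d}$) and $T_2=t_N-T_1$ gives $\lambda_N^{-T_1}=\grdO(1)$ together with $(\beta_N/\lambda_N)^{T_2}\leq N^{-M}$ for any~$M$, and then the argument goes through uniformly in~$D$.
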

		
		We also give the following lemma which, combined with \eqref{eq:vp-blowup}, ensures that removing a macroscopic ball strictly lowers the first eigenvalue of the domain. By Proposition \ref{prop:equivalent-proba-sortie-eigen}, this implies that staying in $D_N$ a time $t_N \gg N^2 \log N$ while avoiding this macroscopic ball is exponentially improbable.
		
		\begin{lemma}\label{lem:vp-retire-boule}
			Let $x \in D$ and $r > 0$ be such that $B_r^x = B(x,r) \subset D$. Then, we have $\lambda(D\setminus B_r^x) > \lambda$.
		\end{lemma}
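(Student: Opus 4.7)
The plan is to use the variational characterization of Dirichlet eigenvalues, combined with the fact that the first Dirichlet eigenfunction on a connected domain is strictly positive in the interior.

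First, the inequality $\lambda(D \setminus B_r^x) \geq \lambda$ is immediate from the Courant--Fischer (min--max) principle: any $u \in H^1_0(D \setminus B_r^x)$ extended by zero on $B_r^x$ belongs to $H^1_0(D)$ with the same Dirichlet energy and $L^2$ norm, so the Rayleigh quotient on $H^1_0(D \setminus B_r^x)$ contains in its set of test functions a subset of those used to compute $\lambda(D)$, giving $\lambda(D) \leq \lambda(D \setminus B_r^x)$.

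To upgrade this to a strict inequality, I argue by contradiction. Assume $\lambda(D \setminus B_r^x) = \lambda$, and let $\psi \geq 0$ be a first Dirichlet eigenfunction on the (bounded, regular) open set $D \setminus \overline{B_r^x}$, normalized in $L^2$. Extending $\psi$ by zero on $\overline{B_r^x}$, the extension (still denoted $\psi$) lies in $H^1_0(D)$ and achieves $\int_D |\nabla \psi|^2 = \lambda \int_D \psi^2$, so by the variational characterization it is a first Dirichlet eigenfunction on $D$. On the other hand, since $D$ has connected interior and smooth boundary, the strong maximum principle (or equivalently the Harnack inequality applied to $\phi_D$) forces any first Dirichlet eigenfunction on $D$ to be strictly positive in the interior of $D$. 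This directly contradicts the fact that $\psi$ vanishes identically on $B_r^x \subset \mathring{D}$.

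There is essentially no technical obstacle here: the only point requiring some care is that the extension by zero genuinely belongs to $H^1_0(D)$, which relies on the smoothness of $\partial B_r^x$ (so that $\psi$, vanishing in the trace sense on $\partial B_r^x$, extends as an $H^1_0$ function on the full domain). This is standard since $\partial B_r^x$ is a smooth sphere strictly inside $D$. The conclusion $\lambda(D \setminus B_r^x) > \lambda$ follows.
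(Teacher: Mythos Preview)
Your proof is correct but takes a different route from the paper. The paper first reduces to small $r$ by domain monotonicity and then cites an asymptotic expansion from \cite{Henrot2006}: $\lambda(D\setminus B_r^x) = \lambda(D) + s_d\, r^{d-2}\phi^2(x) + \bar{o}(r^{d-2})$, which immediately gives strict inequality since $\phi(x)>0$ in the interior. Your approach is instead purely variational: equality would force the extended-by-zero eigenfunction on the punctured domain to be a first eigenfunction on $D$, contradicting the strict positivity guaranteed by the strong maximum principle on the connected interior of $D$. Your argument is more elementary and self-contained (no external asymptotic result needed), while the paper's version has the advantage of being quantitative, giving the leading-order size of the gap $\lambda(D\setminus B_r^x)-\lambda(D)$.
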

		
		\begin{proof}
			By the monotonicity of the eigenvalue $\lambda(D)$ with respect to the domain, it suffices to prove this when $r > 0$ is small. By \cite[Theorem 1.4.1]{Henrot2006} we have
			\[ \lambda(D \setminus B_r^x) = \lambda(D) + s_d r^{d-2} \varphi_1^2(x) + \bar{o}(r^{d-2}) \, , \]
			with $s_d$ the volume of the unit sphere of $\RR^d$ (with respect to the Lebesgue measure on $\RR^{d-1}$) and $\varphi_1$ the first Dirichlet eigenfunction on $D$, which is positive on the interior of $D$. The lemma immediately follows.
		\end{proof}

		\paragraph{Acknowledgements} The author warmly thanks his PhD advisors Quentin Berger \& Julien Poisat for their continued support. The author also thanks Alberto Chiarini for fruitful discussions and additional literature review, as well as an anonymous reviewer for pointing out a crucial mistake in a previous version.

	\end{appendix}

	\bibliographystyle{alpha}
	\bibliography{main}
	
\end{document}